\newcommand{\be}{\begin{eqnarray}}
	\newcommand{\ee}{\end{eqnarray}}
\newcommand{\beq}{\begin{equation}}
	\newcommand{\eeq}{\end{equation}}
\newcommand{\ben}{\begin{eqnarray*}}
	\newcommand{\een}{\end{eqnarray*}}
\newtheorem{theorem}{Theorem}
\newtheorem*{theoremA}{Theorem A}
\newtheorem*{theoremB}{Theorem B}
\newtheorem{corollary}[theorem]{Corollary}
\newtheorem{defi}{Definition}
\newtheorem{lemma}[theorem]{Lemma}
\newtheorem{proposition}[theorem]{Proposition}
\global\let\AddToReset=\@addtoreset
	\newcommand{\RR}{\mathbb R}	
	\newcommand{\R}{\mathbb R}
\newcommand{\seta}{\eta^-}
\newcommand{\ueta}{\eta^+}
\begin{document}
\title[A new approach to the study of elliptic semilinear equations]{A new approach to the study of elliptic semilinear equations }
\author{Pilar Herreros}
\address{Departamento de Matem\'atica, Pontificia
        Universidad Cat\'olica de Chile,
       Casilla 306, Correo 22,
        Santiago, Chile.}
\email{\tt pherrero@mat.uc.cl}


\begin{abstract}

In this paper we define a new operator $J$ for the study of
 $$ \Delta u +f(u)=0,\quad x\in \R ^N, N> 2. $$
Using $J$ we can easily see some qualitative properties of the solutions, for example we can determine how many times $u$ changes sign, which are the values of the local maxima and minima, and where $u$ changes concavity.

We also use this functional to construct nonlinearities $f$ such that this problem has at least two bound state solutions that change sign $j$ times, for $j=1,\dots,k-1$. And another $f$ such that this problem has a unique ground state solution, and at least two bound state solutions that change sign one time.
\end{abstract}

\maketitle

\section{Introduction and main results}

The main goal of this paper is to introduce a new approach for the study of radial solutions of
\begin{eqnarray}\label{pde}
\begin{gathered}
 \Delta u +f(u)=0,\quad x\in \R^N, N> 2, \\
 \lim\limits_{|x|\to\infty}u(x)=0.
\end{gathered}
\end{eqnarray}

Any nonconstant solution to \eqref{pde} is called a bound state solution. Bound state solutions such that $u(x)\geq0$ for all $x\in\mathbb R^N$, are referred to as a first bound state solution, or a ground state solution. A solution that changes sign $k-1$ times will be referred to as a $k^{th}$ bound state.

\medskip

We will work under the following assumptions on the nonlinearity $f$, that determine the regularity and general shape of $f$:

\begin{enumerate}
 \item[$(H_1)$]\begin{itemize} \item $f\in C[0,\infty) \cap C^1(0,\infty)$, except possibly in a finite number of  points where it is not differentiable.
\item $f(0)=0$ and there exist $b\ge 0$ such that $f(s)>0$ for $s>b$,
	      $f(s)\le 0$ for $s\in[0,b]$ and moreover
	      $f(s)<0$ on $(0,\epsilon)$ for some $\epsilon>0$.
\item We extend $f$ to $\R$ by $f(s)=-f(-s)$.
\item Using the notation
\begin{equation*}
 F(s) = \int_0^s f(t) dt,
\end{equation*}
 there exists a unique $\beta\ge b$ such that $F(\beta)=0$.
 \end{itemize}
\end{enumerate}

In $1981$  Gidas, Ni and  Nirenberg in \cite{gnn} proved that all ground state solutions to this problem are radially symmetric, and many other symmetry results have followed (see, for example, \cite{bn,fl,ln}). In this work we will restrict ourselves to radial solutions. Therefore we will work with the  radial version of \eqref{pde}, that is
\begin{eqnarray}\label{eq2}
\begin{gathered}
u''+\frac{N-1}{r}u'+f(u)=0,\quad r>0,\quad N> 2,\\
u'(0)=0,\quad \lim\limits_{r\to\infty}u(r)=0,
\end{gathered}
\end{eqnarray}
where   $'$ denotes differentiation with respect to $r$.

We will approach this by understanding the behaviour of solutions of the initial value problem
\begin{eqnarray}\label{ivp}
\begin{gathered}
u''+\frac{N-1}{r}u'+f(u)=0,\quad r>0,\quad N> 2,\\
u(r_0)=\alpha,\quad u'(r_0)=\bar\alpha
\end{gathered}
\end{eqnarray}
for some $r_0\ge 0,\ \alpha>0$ and $\bar\alpha\in\mathbb R$.

Given a solution $u$ of \eqref{ivp}, we introduce the operator
$$J(u(r))=-\frac{u'(r)}{r}. $$
Different values of $J$, when compared to $f(u(r))$, mark different points in the solution $u$ and help to understand the behaviour of such solutions. For example, from de values of $J$, we can determine how many times $u$ changes sign, which are the values of the local maxima and minima, and where $u$ changes concavity.  We refer to Section \ref{sec J} for more details.\\

The study of uniqueness (or multiplicity) of solutions to \eqref{eq2} involves the comparison of solutions, usually solutions that have initial conditions close to each other.
As part of this new approach, we revisit some of the operators used for the study of solutions and write them in terms of $J$. For this we will require $f$ to satisfy a subcriticality condition:
\begin{enumerate}
 \item[$(H_2)$] $(F/f)'(s) > (N-2)/(2N)$ for all $s>\beta$;
\end{enumerate}
We use this condition to prove that the operators $J$ corresponding to different solutions that change sign compare well. They do not intersect unless they have negative energy. See Proposition \ref{Prop J comparan}.\\

As an application of this approach, in Section $5$ we construct functions $f$ that have multiple $k^{th}$-bound states.
We will approach this problem using a function $f$ defined by parts as
 \begin{eqnarray}\label{fmu}
f_\mu(s)=\begin{cases}
f_1(s) &  s\leq \alpha_1=\alpha_*^k+\epsilon\\
L(s) & \alpha_1\leq s\leq \alpha_1+\epsilon\\
\lambda^2 f_2\left(\frac{s}{\mu}\right) &  s\geq \alpha_1+\epsilon
\end{cases}
\end{eqnarray}
where $f_1$ satisfies $(H_1)$ and $(H_2)$, $L(s)$ is the line from $(\alpha_1, f_1(\alpha_1))$ to $(\alpha_1+\epsilon, \lambda^2f_2(\alpha_1+\epsilon))$, and $\alpha_1$ and $\epsilon$ are given and will be chosen later. We will further assume
\begin{itemize}
 \item[$(H_3)$]  $(sf'/f)(s)$ decreasing for all $s>b$, with $(sf'/f)(\beta)<\frac{N}{N-2}$.
\item[$(H_4)$] There is an initial condition $\alpha_*^k$ such that the solution to
\begin{eqnarray}
\begin{gathered}
u''+\frac{N-1}{r}u'+f_1(u)=0,\quad r>0,\quad N> 2,\\
u(0)=\alpha_*^k,\quad u'(0)=0,
\end{gathered}
\end{eqnarray}
is a $k^{th}$-bound state solution.
\end{itemize}
In the first theorem we prove multiplicity of all $j$-bound state solutions, for $j=0,1\dots,k$.  For this we will consider $f_2$ that satisfy
\begin{itemize}
\item[$(H_5)$] there is an initial condition $\widehat\alpha>\alpha_*^k$ such that the solution to
\begin{eqnarray}
\begin{gathered}
v''+\frac{N-1}{r}v'+f_2(u)=0,\quad r>0,\quad N> 2,\\
v(0)=\widehat\alpha,\quad v'(0)=0,
\end{gathered}
\end{eqnarray}
reaches $v(r_0)=0$ with $v'(r_0)<0$.
\end{itemize}

\begin{theoremA}\label{all bound states}
Assume that $f_1$, $\alpha_*^k$, $f_2$ and $\widehat\alpha$ satisfy the assumptions above and $0<\epsilon<\min\{\beta/4, (\widehat\alpha-\alpha^k_*)/2\}$.
There is a positive constant $\bar \mu(\epsilon)$ such that, for any  $\mu>\bar \mu$ there is a $\bar \lambda(\mu)$ such that for all $\lambda>\bar \lambda$, problem \eqref{pde} with $f_\mu$ given by \eqref{fmu} has at least two bound state solutions that change sign $j$ times, for $j=1,\dots,k-1$.
\end{theoremA}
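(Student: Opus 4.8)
The plan is to produce, for every $j\in\{1,\dots,k-1\}$, two solutions with exactly $j$ sign changes by letting the initial height $\alpha=u(0)$ in the shooting problem \eqref{ivp} (with $r_0=0$, $\bar\alpha=0$, $f=f_\mu$) run through two disjoint intervals. Write $u(\cdot,\alpha)$ for that solution and $m(\alpha)$ for the number of its sign changes on $(0,\infty)$; a bound state with $j$ sign changes occurs at each $\alpha$ for which $u(\cdot,\alpha)\to0$ and $m(\alpha)=j$, that is, at the transitions of $m$.

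First I would dispatch the ``low'' family $\alpha\le\alpha_1$. Using the energy $E(r)=\tfrac12u'(r)^2+F_\mu(u(r))$, which is nonincreasing, together with the fact that $F_\mu=F_1$ is increasing on $(\beta,\alpha_1]$, one checks that $u(\cdot,\alpha)$ never leaves $[-\alpha_1,\alpha_1]$ and hence solves the equation with $f_1$ in place of $f_\mu$. The operator $J$ and the comparison of the $J$'s (Proposition \ref{Prop J comparan}), under $(H_2)$ and $(H_3)$, show that $\alpha\mapsto m(\alpha)$ for $f_1$ is a nondecreasing step function increasing by one unit at a strictly increasing sequence $\alpha_*^1<\alpha_*^2<\cdots$, with $u(\cdot,\alpha_*^n)$ an $n$-th bound state; by $(H_4)$ the value called $\alpha_*^k$ is the $k$-th term, so $\alpha_*^1<\cdots<\alpha_*^k<\alpha_1$. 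This already gives one bound state with exactly $j$ sign changes for each $j=1,\dots,k-1$.

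The heart is the ``high'' family $\alpha>\alpha_1+\epsilon$. Here I would use the scaling $u(r)=\mu\,v\!\left(\tfrac{\lambda}{\sqrt\mu}\,r\right)$, which turns a solution of $w''+\tfrac{N-1}{r}w'+\lambda^2f_2(w/\mu)=0$ into a solution $v$ of the $f_2$-equation. Take $\alpha=\mu\hat\alpha$ with $\hat\alpha$ in a left neighbourhood of $\widehat\alpha$, down to the infimum of those $\hat\alpha$ for which the $f_2$-solution still reaches $0$. While $u(\cdot,\mu\hat\alpha)\ge\alpha_1+\epsilon$ it equals $\mu v(\tfrac{\lambda}{\sqrt\mu}\cdot)$; by $(H_5)$ and continuity in $\hat\alpha$ this $v$ descends monotonically to $0$, so $u$ descends through the $f_2$-block, crosses the thin $L$-block, and re-enters $\{|u|\le\alpha_1\}$ at some radius $r_1=r_1(\hat\alpha,\mu,\lambda)$ with a slope governed by $\sqrt\mu\,\lambda\,v'(r_0(\hat\alpha))$; for $\lambda$ large, $r_1\to0$. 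In $\{|u|\le\alpha_1\}$ the solution again obeys the $f_1$-equation, issued from data $(r_1,\alpha_1,p)$. I would fix $\mu$ large (depending on $\epsilon$), which makes the scaling accurate and pins down the $\hat\alpha$ for which $p$ is exactly at the threshold needed to clear the $L$-block, so that $u$ re-enters $\{|u|\le\alpha_1\}$ with slope $\approx0$; then fix $\lambda$ large (depending on $\mu$, $\epsilon$) so that the corresponding $r_1$ is tiny. Since the damping $\tfrac{N-1}{r}$ is huge near such a tiny $r_1$, the ensuing $f_1$-evolution loses energy abruptly and behaves like an $f_1$-solution issued from a small effective height; comparing its $J$ with the $J$'s of the $f_1$-bound states via Proposition \ref{Prop J comparan} (again using $(H_2)$ to prevent intersection), one shows that along the $\hat\alpha$-family $m$ is large at both ends of the range and dips to a value $\le1$. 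By the no-skipping property of $m$ it then attains every value in $\{1,\dots,k-1\}$ along this family, and at the transitions one obtains bound states with exactly $j$ sign changes, $j=1,\dots,k-1$. Since these initial heights exceed $\alpha_1+\epsilon$ while the ``low'' ones are below $\alpha_1$, the two bound states of each order are distinct.

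The main obstacle is the quantitative bookkeeping in the high family: controlling the scaling remainder uniformly (forcing $\mu$ large first), arranging $r_1$ small enough to ``reset'' the $f_1$-dynamics (forcing $\lambda$ large, depending on $\mu$), and keeping both uniform in $j$ so that a single pair $\bar\mu(\epsilon)$, $\bar\lambda(\mu)$ works. The sign/energy analysis of $f_\mu$ is delicate because $F_\mu$ acquires a deep well of depth $\sim\lambda^2\mu$ in the $f_2$-block, so one must verify that the relevant solutions do clear the $L$-block and do not get trapped in that well; the comparison estimates for $J$ from Proposition \ref{Prop J comparan} are precisely what turn ``$m$ dips to $\le1$'' into a rigorous statement.
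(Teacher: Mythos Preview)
Your low-family paragraph is essentially right in spirit (solutions with $\alpha\le\alpha_1$ never leave $[-\alpha_1,\alpha_1]$, hence solve the $f_1$-equation, and the $j$-th bound states $\alpha_*^j$ for $j\le k$ exist by the usual shooting/topological argument). Note, however, that Proposition~\ref{Prop J comparan} only compares $J$'s up to the first zero and under the sign hypothesis $P_u<0\le P_v$; it does not by itself give a global monotone step function $\alpha\mapsto m(\alpha)$. The paper never uses such monotonicity: it only needs $\alpha_*^k\in\mathcal G_k\subset\mathcal N_{k-1}$ and one point $\bar\alpha>\alpha_1+\epsilon$ in $\mathcal P_1$, together with openness of the $\mathcal N_j,\mathcal P_j$ (Lemma~\ref{Lema Preliminares}) to force a $j$-th bound state between them for each $j$.

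The serious gap is in your high-family mechanism. You tune $\hat\alpha$ so that $u$ re-enters $\{|u|\le\alpha_1\}$ at a tiny $r_1$ with slope $p\approx 0$, and then assert that the large damping $\tfrac{N-1}{r}$ makes the subsequent $f_1$-evolution behave like one launched from a \emph{small} effective height, so that $m$ dips to $\le 1$. That is false: Cauchy data $(r_1,\alpha_1,p)$ with $r_1\to0$ and $p\to0$ are close to $(0,\alpha_1,0)$, so by continuous dependence the solution looks like the $f_1$-solution with initial height $\alpha_1=\alpha_*^k+\epsilon$, i.e.\ essentially the $k$-th bound state, and $m$ stays near $k-1$, not near $0$ or $1$. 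The quantity that actually governs the effective restart height is not $p$ but the scale-invariant $|u'|\,r_1=J r^2$ at re-entry: Proposition~\ref{Prop J+-} (together with Lemma~\ref{Lema paso e} for the passage through the $L$-block) shows that if $r_\lambda(\alpha_1)\to0$ and $J_\lambda r_\lambda^2(\alpha_1)\to\eta^-$, then the solution is sandwiched between $f_1$-solutions launched from any $\alpha_-<\alpha_1-\frac{\eta^-}{N-2}<\alpha_+$. The paper uses $\mu$ large precisely to make $J r^2$ at the bottom of the $f_2$-block arbitrarily large (since $J_\lambda r_\lambda^2(\alpha_1+\epsilon)=\mu\,J_v r_v^2((\alpha_1+\epsilon)/\mu)$), then by continuity in the initial height selects $\bar\alpha$ with $J r^2(\alpha_1+\epsilon)=(N-2)d$ for some $d>\alpha_1+\epsilon-\alpha_*^1$; this forces the effective restart height below $\alpha_*^1$, so $J_\lambda<J_+$ with $\alpha_+<\alpha_*^1$ and hence $\bar\alpha\in\mathcal P_1$. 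The role of $\lambda$ is then only to drive $r_\lambda(\alpha_1)\to0$ so that Proposition~\ref{Prop J+-} applies. With $\bar\alpha\in\mathcal P_1$ and $\alpha_*^k\in\mathcal G_k$, the topological argument yields a second $j$-th bound state in $(\alpha_*^k+2\epsilon,\bar\alpha)$ for each $j$. Your ``$m$ large at both ends and dips to $\le1$'' picture is therefore both unjustified and unnecessary; what is needed is a single $\bar\alpha$ in $\mathcal P_1$, obtained by tuning $Jr^2$ to a \emph{large} specific value, not by making the re-entry slope small.
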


The second theorem is perhaps more surprising, it shows that sometimes there is multiplicity of $2^{nd}$-bound states but only one ground state solution. For this we will consider a function of the form
 \begin{eqnarray}\label{fa}
f_a(s)=\begin{cases}
f_1(s) &  s\leq \alpha_1=\alpha_*^k+\epsilon\\
L(s) & \alpha_1\leq s\leq \alpha_1+\epsilon\\
\lambda^2(u+a)^p &  s\geq \alpha_1+\epsilon
\end{cases}
\end{eqnarray}
with $f_1$ that satisfies $(H_1)-(H_4)$, and $p>\frac{N+2}{N-2}$.

\begin{theoremB}\label{Teo 2 bound states}
Assume that $f_1$, $\alpha_*^{k}$, $\alpha_*^{k+1}$ and $f_a$ as given by \eqref{fa}, and $0<\epsilon<\beta/4$.
Then there are constants $a$ and a $\bar \lambda(a)$ such that for all $\lambda>\bar \lambda$, problem \eqref{pde} with $f=f_a$ has at least two bound state solutions that change sign $k$ times.
Moreover, there are no bound state solutions that change sign $j$ times, for $j<k$, with initial condition $\alpha>\alpha_*^k$.
\end{theoremB}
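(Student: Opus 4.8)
The plan is to rephrase everything through the initial value problem \eqref{ivp} with $r_0=0$: write $u_\alpha$ for the solution of $u''+\tfrac{N-1}{r}u'+f_a(u)=0$, $u(0)=\alpha$, $u'(0)=0$, and let $N(\alpha)$ be its number of sign changes. Three remarks organize the argument. First, since $\alpha_1=\alpha_*^k+\epsilon$ and $f_a$ is extended oddly, $f_a\equiv f_1$ on $(-\alpha_1,\alpha_1)$; hence any $u_\alpha$ that stays in this interval is a genuine $f_1$-solution, and $u_{\alpha_*^k}$ is the $k^{th}$-bound state of $(H_4)$ (it changes sign $k-1$ times). Second, because $\alpha_*^k>\beta$ and $f_a>0$ on $(b,\infty)$, every $u_\alpha$ with $\alpha\ge\alpha_*^k$ has positive energy $E=F_a(\alpha)>0$, and $\alpha\mapsto F_a(\alpha)$ is strictly increasing on $[\alpha_*^k,\infty)$. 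Third, the constant $a$ is fixed first (with $\alpha_1+\epsilon+a>0$, so that $f_a$ is of admissible type and the $J$-calculus of Section \ref{sec J} is available where $f_a=f_1$ satisfies $(H_2)$, and otherwise so that the rescaled supercritical problem below has the needed behaviour), and then $\bar\lambda$ is chosen depending on $a$.

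I would first dispose of the ``moreover'' statement, i.e. $N(\alpha)\ge k$ for all $\alpha>\alpha_*^k$. For $\alpha\in(\alpha_*^k,\alpha_1]$ this is the subcritical theory of $f_1$ (under $(H_2)$ the $f_1$-nodal count is nondecreasing, and it equals $k$ just above $\alpha_*^k$). For $\alpha>\alpha_1$, $u_\alpha$ first descends through the layer $\{u>\alpha_1\}$ over a short radius interval $[0,r_{\mathrm{ex}}(\alpha)]$ without changing sign; a direct estimate of that layer shows it reaches the level $\alpha_1$ with $J(u_\alpha(r_{\mathrm{ex}}))>J(u_{\alpha_1}(r_{\mathrm{ex}}))$. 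Beyond $r_{\mathrm{ex}}(\alpha)$ both $u_\alpha$ and $u_{\alpha_1}$ solve the $f_1$-equation, change sign and have positive energy, so by Proposition \ref{Prop J comparan} their $J$-curves do not meet and $J(u_\alpha(\cdot))$ stays strictly above $J(u_{\alpha_1}(\cdot))$; hence $u_\alpha$ crosses the axis at least as often as $u_{\alpha_1}$, and, since neither has changed sign on $[0,r_{\mathrm{ex}}]$, $N(\alpha)\ge N(\alpha_1)\ge k$. This gives the non-existence of a bound state changing sign $j<k$ times with initial condition above $\alpha_*^k$.

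For the multiplicity I would run a shooting argument in $\alpha$, the new ingredient being the passage of $u_\alpha$ through the supercritical layer $\{u>\alpha_1\}$. There the substitution $\rho=\lambda r$, $w(\rho)=u_\alpha(\rho/\lambda)$ removes $\lambda$ and gives $w''+\tfrac{N-1}{\rho}w'+(w+a)^p=0$; since $p>\tfrac{N+2}{N-2}$, this carries a singular solution $w_s(\rho)=c_{p,N}\,\rho^{-2/(p-1)}$, the regular solutions with $w(0)=\alpha$ approach $w_s$ on $\{\rho>0\}$ as $\alpha\to\infty$, and for $\alpha$ near $\alpha_1$ the excursion above $\alpha_1$ is small. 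Consequently, for $\lambda$ large, $u_\alpha$ re-enters $\{u<\alpha_1\}$ at a radius $r_{\mathrm{ex}}(\alpha)\to0$ with an ``effective slope'' $s(\alpha)=-u_\alpha'(r_{\mathrm{ex}}(\alpha))$ ranging from $0$ at $\alpha=\alpha_1$ up to order $\lambda$ for $\alpha$ large; past $r_{\mathrm{ex}}(\alpha)$, $u_\alpha$ is the $f_1$-solution with near-origin data $(\alpha_1,-s(\alpha))$, so $N(\alpha)$ is its sign-change count, which by Proposition \ref{Prop J comparan} is nondecreasing in $s(\alpha)$. For $\lambda>\bar\lambda$ one finds that $N$ takes the value $k$ on $[\alpha_1,\infty)$ — near $\alpha_1$, or at $\alpha_*^{k+1}$ itself, which supplies one bound state with $k$ sign changes — and also takes larger values there. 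The decisive point, and the one place where supercriticality is used essentially, is that $\alpha\mapsto s(\alpha)$ is not monotone: the singular-bubble profile makes the energy delivered to $\{u<\alpha_1\}$ oscillate with $\alpha$, so $N(\alpha)$ attains the value $k$, rises above it and returns to it. At two of the resulting transitions between the $k$-th and $(k+1)^{th}$ nodal classes an intermediate-value argument (the last bump of $u_\alpha$ crosses the axis for $\alpha$ on one side and turns back on the other, hence just touches the axis and decays for some intermediate $\alpha$) produces initial conditions for which $u_\alpha$ decays to $0$ with exactly $k$ sign changes. These are the two bound states; the ``moreover'' part, already proved, shows nothing of lower nodal class lies above $\alpha_*^k$.

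The main obstacle is precisely this analysis of the supercritical layer: establishing, uniformly in $\alpha$ and in $\lambda>\bar\lambda(a)$, the matched-asymptotics picture above — the bounds on $r_{\mathrm{ex}}(\alpha)$ and $s(\alpha)$, their continuity in $\alpha$, and above all the non-monotonicity of $s(\alpha)$ (equivalently, of the energy carried across $\{u=\alpha_1\}$) that forces $N(\alpha)$ back down to the value $k$. Once that control is in hand, the remaining ingredients — the reduction, the sign of the energy, the two applications of Proposition \ref{Prop J comparan}, and the intermediate-value steps — are routine.
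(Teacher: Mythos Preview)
Your proposal pins the multiplicity on a mechanism the paper does not use and that you yourself flag as the unresolved ``main obstacle'': non-monotonicity of the exit slope $s(\alpha)$, so that $N(\alpha)$ rises above $k$ and then returns to $k$. The paper never proves (or needs) any such oscillation. Its argument rests on two simple facts about the pure power $v''+\tfrac{N-1}{\rho}v'+v^p=0$: convergence to the singular solution gives a finite $K_1:=\sup_{\alpha>1}J_vr_v^2(\alpha,1)$, and the scaling $v(\alpha,r)=\alpha\,v(1,\alpha^{(p-1)/2}r)$ gives $\sup_{\alpha>s}J_vr_v^2(\alpha,s)=K_1s$. The shift $a$ is then chosen \emph{explicitly}, $a=\bar s-(\alpha_1+\epsilon)$ with $\bar s=(N-2)d/K_1$, so that the supremum over all $\alpha$ of $J_ur_u^2$ at the exit level $\alpha_1+\epsilon$ equals exactly $(N-2)d$, where $d$ is tuned to place $\alpha_1+\epsilon-d$ inside the window $(\alpha_*^k,\tilde\alpha^k)$ supplied by Lemma~\ref{Lema I<0}. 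This bound is \emph{uniform in $\alpha$}: every solution exits the supercritical layer with $Jr^2\le(N-2)d$, so Proposition~\ref{Prop J+-} (together with Lemma~\ref{Lema paso e} for the linear strip and Lemma~\ref{Lema continudad} for uniformity) sandwiches $J_\lambda$ between $J_-$ and $J_+$ with $\alpha_-,\alpha_+\in(\alpha_*^k,\tilde\alpha^k)$. The first bound state is simply the retained $f_1$-bound state, whose trajectory stays where $f_a=f_1$; for the second one picks a single $\alpha_\Box$ whose $J_ur_u^2$ is near the supremum, and Lemma~\ref{Lema I<0} applied to $u_+$ forces $u_{\alpha_\Box}$ into $\mathcal P$-territory, producing one $\mathcal N/\mathcal P$ transition above $\alpha_1$. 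Supercriticality enters only through the boundedness of $Jr^2$ and the scaling law, not through any oscillation; in particular your argument would need separate justification (or would fail) for $p$ above the Joseph--Lundgren exponent, whereas the paper's works for all $p>\tfrac{N+2}{N-2}$.

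Two technical points also need repair. Your ``direct estimate'' that $J(u_\alpha)>J(u_{\alpha_1})$ at the exit level is not enough to invoke Proposition~\ref{Prop J comparan}, which also requires $r_u>r_v$ and $P_u<0\le P_v$; the paper obtains this through Lemma~\ref{Lema P+-}, and the quantity controlled at the exit is $Jr^2$, not $J$. And your ``free'' bound state at $\alpha_*^{k+1}$ is not automatically available as stated: with $\alpha_1=\alpha_*^k+\epsilon<\alpha_*^{k+1}$ the solution $u_{\alpha_*^{k+1}}$ sees $f_a\ne f_1$ and need not be a bound state for $f_a$; the bound state that comes for free is the one whose whole trajectory lies in the region where $f_a\equiv f_1$.
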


\begin{corollary}\label{Corollary uniqueness}
Assume that $f_1$, $\alpha_*^1$, $\alpha_*^2$ and $f_a$ as given by \eqref{fa}, and $0<\epsilon<\beta/4$. and $f_1$ has a unique ground state solution with initial condition in $(\beta, \alpha_*^2)$. Then there are positive constant $a$, $\epsilon$ and $\bar \lambda$ such that for all $\lambda>\bar \lambda$, problem \eqref{pde} with $f=f_a$ has a unique ground state solution, and at least two bound state solutions that change sign one time.
\end{corollary}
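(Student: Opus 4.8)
The plan is to deduce the statement from Theorem B applied with $k=1$, together with the extra uniqueness hypothesis on $f_1$. Fix $\epsilon\in(0,\beta/4)$ and let $a$ and $\bar\lambda(a)$ be the constants produced by Theorem B for this $k$; then, for $\lambda>\bar\lambda$, the nonlinearity $f_a$ already has at least two bound state solutions that change sign once, so the whole statement follows once we show that $f_a$ has exactly one ground state solution. I would prove this by identifying the ground states of $f_a$ with the ground states of $f_1$ whose initial condition lies in $(\beta,\alpha_*^1]$, and checking that there is exactly one such.

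The first step is to locate the initial conditions of a ground state $u$ of $f_a$. Such a $u$ solves \eqref{ivp} with $r_0=0$ and some $\alpha>0$, and along \eqref{ivp} the energy $E(r)=\tfrac12u'(r)^2+F(u(r))$ is nonincreasing, since $E'(r)=-\tfrac{N-1}{r}u'(r)^2$. Because $u(r)\to0$ and $u'(r)\to0$ as $r\to\infty$ (the latter standard for bound states), $E(r)\to F(0)=0$, while $E(0)=F(\alpha)$; hence $F(\alpha)\ge0$, and by $(H_1)$ this forces $\alpha\ge\beta$, the case $\alpha=\beta$ being excluded because it would make $E\equiv0$, hence $u'\equiv0$ and $u\equiv\beta\neq0$. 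Combining this with the ``moreover'' part of Theorem B --- no ground state has initial condition $\alpha>\alpha_*^1$ --- every ground state of $f_a$ has initial condition in $(\beta,\alpha_*^1]$, and in particular below $\alpha_1=\alpha_*^1+\epsilon$.

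Next, by \eqref{fa} and the oddness of $f_a$ and $f_1$, one has $f_a\equiv f_1$ on $[-\alpha_1,\alpha_1]$. If $u$ is a ground state of $f_a$ or of $f_1$ with initial condition $\alpha\in(\beta,\alpha_*^1]$, then $u\ge0$ by definition and $u\le\alpha$: indeed $\alpha>\beta\ge b$ gives $f(\alpha)>0$, so $u$ is initially decreasing, and at a first time $r_1>0$ with $u(r_1)=\alpha$ we would have $u'(r_1)\ge0$, whence $E(r_1)\ge F(\alpha)=E(0)$, which forces $E$ to be constant on $[0,r_1]$ and $u\equiv\alpha$ there, a contradiction. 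Thus $0\le u\le\alpha<\alpha_1$ throughout, so $u$ solves both the $f_a$- and the $f_1$-equation with the same data at $r_0=0$; since $u\ge0$ and $u\to0$ in either case, the ground states of $f_a$ with initial condition in $(\beta,\alpha_*^1]$ are precisely those of $f_1$ with initial condition in $(\beta,\alpha_*^1]$.

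It remains to count the latter. By $(H_4)$ with $k=1$ the $f_1$-solution with $\alpha=\alpha_*^1$ is a ground state, so $\alpha_*^1\in(\beta,\alpha_*^2)$ (using $\alpha_*^1<\alpha_*^2$, the monotonicity of the number of sign changes in the initial condition coming from the earlier sections under $(H_3)$); hence $(\beta,\alpha_*^1]\subset(\beta,\alpha_*^2)$, and the hypothesis that $f_1$ has a unique ground state with initial condition in $(\beta,\alpha_*^2)$ yields exactly one ground state of $f_1$, necessarily the one at $\alpha_*^1$, with initial condition in $(\beta,\alpha_*^1]$. By the previous step $f_a$ therefore has a unique ground state, and together with the two sign-changing solutions from Theorem B this proves the Corollary. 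Granting Theorem B, essentially no analysis remains: the content is the locality $f_a=f_1$ near the origin plus elementary energy estimates, and the only delicate points are bookkeeping --- that $\epsilon$, $a$, $\bar\lambda$ can be chosen compatibly with the hypotheses of Theorem B, and the monotonicity fact $\alpha_*^1<\alpha_*^2$ from the earlier sections.
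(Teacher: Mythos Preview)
Your proposal is correct and is exactly the intended derivation: the paper states this Corollary immediately after Theorem~B without a separate proof, and your argument---apply Theorem~B with $k=1$ to obtain the two second bound states and the absence of ground states with $\alpha>\alpha_*^1$, then use $f_a\equiv f_1$ on $[-\alpha_1,\alpha_1]$ plus the uniqueness hypothesis on $f_1$---is precisely how the Corollary is meant to follow. The only point to tighten is the ordering $\alpha_*^1<\alpha_*^2$: rather than invoking $(H_3)$, it is cleaner to note that $\alpha_*^2\in\mathcal G_2\subset\mathcal N_1$, whence the hypothesis ``unique ground state with initial condition in $(\beta,\alpha_*^2)$'' already forces that ground state to be $\alpha_*^1$ and hence $\alpha_*^1\in(\beta,\alpha_*^2)$.
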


\medskip

\bigskip

\section{Some properties of the solutions of the initial value problem}\label{prel}

The aim of this section is to establish several properties of the solutions to the initial value problem \eqref{ivp}.
Since $f$ is continuous, problem \eqref{ivp} has a solution defined for all $r\ge 0$ for any $\alpha>\beta$  but it might not be uniquely defined. It is straight forward to see that unique extendibility can be lost only if $u$ reaches a double zero. In this case, we will extend the solution as $0$, and consider it a bound state solution.

By standard theory of ordinary differential equations, the solution depends continuously on the initial data in any compact subset of its domain of definition.

Let us set
$$Z_1(\alpha)=\sup\{r>0\ |\ u(s,\alpha)>0\mbox{ and }u'(s,\alpha)<0\ \mbox{ for all }s\in(0,r)\}$$
and define
\begin{eqnarray*}
{\mathcal N_1}&=&\{\alpha\in[\beta,\gamma_*)\ :\ u(Z_1(\alpha),\alpha)=0\quad\mbox{and}\quad u'(Z_1(\alpha),\alpha)<0\}\\
{\mathcal G_1}&=&\{\alpha\in[\beta,\gamma_*)\ :\ u(Z_1(\alpha),\alpha)=0\quad\mbox{and}\quad u'(Z_1(\alpha),\alpha)=0\}\\
{\mathcal P_1}&=&\{\alpha\in[\beta,\gamma_*)\ :\ u(Z_1(\alpha),\alpha)>0\},
\end{eqnarray*}
where $\beta$ is as defined in $(H_1)$.  If $Z_1(\alpha)=\infty$ we consider $u(Z_1(\alpha),\alpha)=\lim_{r\to\infty}u(r,\alpha)$ that can only be $0$ or $b$, thus $\alpha\in {\mathcal G_1}$ or ${\mathcal P_1}$ respectively.

We now extend these definitions by induction for $k\ge2$. If ${\mathcal N_{k-1}}\not=\emptyset$, we set
\ben
T_{k-1}(\alpha)=&\sup\{r\in(Z_{k-1}(\alpha),D_\alpha)\ :\ (-1)^ku'(r,\alpha)\le 0\},
\een
and
\begin{eqnarray*}
Z_k(\alpha)=\sup\{r>T_{k-1}(\alpha)\ |\ (-1)^ku(s,\alpha)<0\mbox{ and }(-1)^ku'(s,\alpha)>0\ \\
\mbox{ for all }s\in(T_{k-1}(\alpha),r)\},
\end{eqnarray*}
 if $T_{k-1}(\alpha)=\infty$, we set $Z_k(\alpha)=\infty$.
\begin{figure}[h]
\begin{center}
 \includegraphics[keepaspectratio, width=13cm]{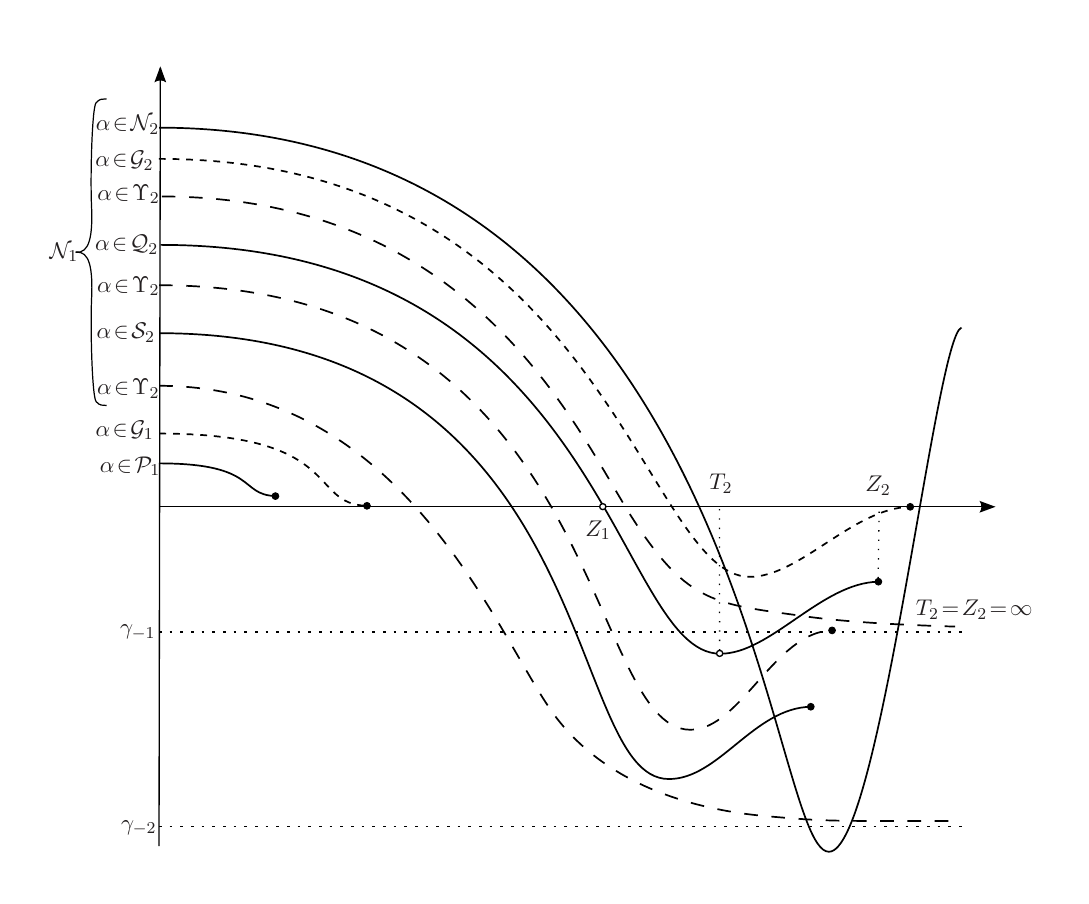}
 \end{center}
 \caption{Solutions of \eqref{ivp} with initial condition in these sets}
\end{figure}

We now define
\begin{eqnarray*}
{\mathcal N_k}&=&\{\alpha\in\mathcal N_{k-1}\ :\ u(Z_k(\alpha),\alpha)=0\quad\mbox{and}\quad (-1)^ku'(Z_k(\alpha),\alpha)>0\},\\
{\mathcal G_k}&=&\{\alpha\in\mathcal N_{k-1}\ :\ u(Z_k(\alpha),\alpha)=0\quad\mbox{and}\quad u'(Z_k(\alpha),\alpha)=0\},\\
{\mathcal P_k}&=&\{\alpha\in\mathcal N_{k-1}\ :\ (-1)^ku(Z_k(\alpha),\alpha)<0\}.
\end{eqnarray*}

We will use some known properties of solutions of \eqref{ivp} and the sets defined above, collected in the following Lemma. For a proof see for instance \cite{cghh15} and references therein.
\medskip

\begin{lemma}\label{Lema Preliminares}
Assume that $f$ satisfies $(H_1)$ and let $k\in\mathbb N$.
\begin{enumerate}
\item[(i)]
The sets ${\mathcal N_k}$ and ${\mathcal P_k}$ are open in $[\beta,\gamma_*)$.
\item[(ii)]
The boundary of $\mathcal G_k\cup\mathcal P_k$ is contained in $\bigcup_{i=1}^k\mathcal G_i$.
\item[(iii)] Any solution $u$ of \eqref{ivp} has at most a finite number of sign changes.
\end{enumerate}

\end{lemma}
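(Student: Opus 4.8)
The plan is to reduce all three items to standard ODE facts about the solution map $(\alpha,r)\mapsto u(r,\alpha)$ of \eqref{ivp}, together with the sign structure of $f$ imposed by $(H_1)$. I would begin by recording the energy identity for \eqref{ivp}: setting $E(r,\alpha)=\tfrac12 u'(r,\alpha)^2+F(u(r,\alpha))$, one has $E'(r)=-\tfrac{N-1}{r}u'(r)^2\le 0$, so $E$ is nonincreasing along every solution. Combined with $F(0)=0$, $F<0$ on $(0,\beta)$ and $F(\beta)=0$, this gives the basic dichotomy at a zero of $u$: at $r=Z_k(\alpha)$ either $u'\ne0$ (so the energy is still positive, the solution crosses transversally, and one enters the next "lap") or $u'=0$, which forces a double zero and hence $\alpha\in\mathcal G_k$ by our extension convention. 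This makes the three sets $\mathcal N_k$, $\mathcal G_k$, $\mathcal P_k$ an honest partition of their common domain.

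\emph{Item (i).} I would prove openness of $\mathcal N_k$ and $\mathcal P_k$ by induction on $k$, using continuous dependence on initial data. For $\alpha_0\in\mathcal N_k$, the solution $u(\cdot,\alpha_0)$ crosses zero transversally at each of the points $Z_1(\alpha_0)<\dots<Z_k(\alpha_0)$ (all finite, since $Z_k=\infty$ would put $\alpha_0$ in $\mathcal G_k$ or $\mathcal P_k$), and between consecutive crossings it has a strict interior extremum where $u'$ vanishes simply because $f(u)\ne0$ there. All these are finitely many nondegenerate conditions on a compact $r$-interval $[0,Z_k(\alpha_0)+\delta]$, so they persist under a small perturbation of $\alpha$; the implicit function theorem lets the crossing points $Z_i(\alpha)$ and turning points $T_i(\alpha)$ move continuously, and one checks the sign conditions defining $\mathcal N_k$ remain valid. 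The same compact-interval argument handles $\mathcal P_k$, where the defining strict inequality $(-1)^k u(Z_k(\alpha),\alpha)<0$ is manifestly open once $Z_k$ depends continuously on $\alpha$. A point needing care is that $Z_k(\alpha)$ is only upper semicontinuous in general; transversality at $\alpha_0$ is exactly what upgrades this to continuity near $\alpha_0$.

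\emph{Items (ii) and (iii).} For (ii), let $\alpha_0\in\partial(\mathcal G_k\cup\mathcal P_k)$. By (i) the complement of $\bigcup_{i\le k}\mathcal G_i$ within $[\beta,\gamma_*)$ is the union of the open sets $\mathcal P_i$ ($i\le k$) and $\mathcal N_k$ restricted appropriately, together with points in higher $\mathcal N$'s — so a boundary point of $\mathcal G_k\cup\mathcal P_k$ that lies in none of the $\mathcal G_i$ would have to lie in one of these open sets, which is impossible since those sets are open and either contained in, or disjoint from, $\mathcal G_k\cup\mathcal P_k$; chasing the inductive definitions shows the only escape is $\alpha_0\in\bigcup_{i=1}^k\mathcal G_i$. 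For (iii), the finiteness of sign changes follows from the energy bound: after the last zero with $u'\ne0$, $E$ is a positive constant's decreasing image, and on any bounded $u$-range the time between consecutive zeros is bounded below (the linearized period controls it), while $E\to0$ forces $u$ into $(-\beta,\beta)$ eventually, where $u$ can make at most one more crossing before being trapped by the sign of $F$; a solution reaching a double zero is extended by $0$ and has finitely many sign changes by fiat.

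The main obstacle is the bookkeeping in item (ii): making precise the inductive structure relating $\partial(\mathcal G_k\cup\mathcal P_k)$ to the lower-index sets, since the sets $\mathcal N_k$ are nested ($\mathcal N_k\subset\mathcal N_{k-1}$) while $\mathcal G_k,\mathcal P_k$ live as disjoint pieces carved out of $\mathcal N_{k-1}$. One has to argue that no sequence $\alpha_n\to\alpha_0$ with $\alpha_n\in\mathcal G_k\cup\mathcal P_k$ can accumulate at a point where the solution fails to have a degenerate zero at some lap $i\le k$ — i.e. that degeneracy "propagates back" from the limit. This is exactly where the transversality-implies-continuity observation from (i) does the work, so the real content of the lemma is concentrated there; the rest is organizing the cases. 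Since a complete treatment is in \cite{cghh15}, I would present the energy identity and the transversality argument in detail and refer there for the remaining induction.
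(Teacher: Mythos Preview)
The paper does not prove this lemma at all; it simply states ``For a proof see for instance \cite{cghh15} and references therein.'' Your proposal therefore goes well beyond what the paper provides, and the sketch you give --- energy monotonicity, transversality at simple zeros giving continuous dependence of the $Z_i$ and $T_i$, induction on $k$ for openness, and the partition argument for (ii) --- is precisely the standard route taken in \cite{cghh15} and the earlier literature it draws on. Your concluding remark, to present the energy and transversality arguments and then cite \cite{cghh15} for the induction, matches the paper's own choice exactly.

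One minor comment: your sketch for (iii) is the loosest part. The phrase ``the linearized period controls it'' is not quite the mechanism; the clean argument is that each extremum $T_k$ between consecutive zeros has $E(T_k)=F(u(T_k))>0$, hence $|u(T_k)|>\beta$, and then one shows the total energy dissipation $\int_0^\infty \tfrac{N-1}{r}u'^2\,dr$ is finite while each lap from $Z_k$ to $Z_{k+1}$ contributes an amount bounded below (since $|u|$ must travel from $0$ past $\beta$ and back with bounded speed on a bounded $r$-scale once $Z_k$ is large). This is a small refinement and does not affect the correctness of your overall plan.
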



We will also need a bound on $r$ and $ r|u'( r)|$ when the solution crosses a given interval.

\begin{lemma}\label{epsilon}
Let $v$ be the solution to the initial value problem
\begin{eqnarray}\label{ge}
\begin{gathered}
v''+\frac{N-1}{r}v'+g(v)=0,\quad r>r_\delta,\\
v(r_\delta)=\bar\alpha +\delta,\quad v'(r_\delta)=v'_\delta<0
\end{gathered}
\end{eqnarray}
where $g$ is a positive continuous function defined in $[\bar\alpha,\bar\alpha+\delta]$ and $r_\delta>0$. Let $\bar r$ be defined by  $v(\bar r)=\bar\alpha$, $||g||_+ = \max \{g(s) : s \in [\bar\alpha,\bar\alpha +\delta]\}$. Consider $\frac{\zeta}{N-2}>\delta$ and  $B>1$ given by $\delta \frac{B ^{N-2}}{B ^{N-2}-1}=\frac{\zeta}{N-2}$.

If  $\zeta<r_{\delta}|v'(r_\delta)|$  then $\bar r<B  r_\delta$ and $$\frac{\zeta}{B ^{N-2}}\le \bar r|v'(\bar r)|\le r_\delta|v'(r_\delta)|+\frac{B^N-1}{N}||g||_+r_\delta^N .$$
\end{lemma}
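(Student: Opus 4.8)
\emph{Proof plan.} The first step is to pass to the Emden--Fowler form of the equation: multiplying \eqref{ge} by $r^{N-1}$ turns it into $\big(r^{N-1}v'\big)'=-\,r^{N-1}g(v)$, so that the quantity
\[
P(r):=r^{N-1}v'(r)\qquad\text{satisfies}\qquad P'(r)=-\,r^{N-1}g\big(v(r)\big).
\]
On the interval $[r_\delta,\bar r]$ the solution $v$ takes values in $[\bar\alpha,\bar\alpha+\delta]$, where $g$ is defined and positive; hence $P'<0$ there, $P$ is strictly decreasing, and since $P(r_\delta)=r_\delta^{N-1}v'_\delta<0$ we get $v'<0$ throughout, so $v$ decreases monotonically and
\[
|P(r)|>|P(r_\delta)|=r_\delta^{N-1}|v'(r_\delta)|\qquad\text{for }r\in(r_\delta,\bar r].
\]
The same first-integral estimate, now over $(r_\delta,\infty)$, shows $\bar r<\infty$: if $v$ never reached $\bar\alpha$ we would obtain $\delta\ge r_\delta|v'(r_\delta)|/(N-2)>\zeta/(N-2)$, contradicting the hypothesis $\zeta/(N-2)>\delta$.

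Next I would bound $\bar r$. Writing $v'=P/r^{N-1}$, integrating from $r_\delta$ to $\bar r$, and using the lower bound on $|P|$ above,
\[
-\delta=v(\bar r)-v(r_\delta)=\int_{r_\delta}^{\bar r}\frac{P(s)}{s^{N-1}}\,ds<-\,r_\delta^{N-1}|v'(r_\delta)|\int_{r_\delta}^{\bar r}s^{1-N}\,ds=-\,\frac{r_\delta|v'(r_\delta)|}{N-2}\Big(1-(r_\delta/\bar r)^{N-2}\Big).
\]
Rearranging and invoking $\zeta<r_\delta|v'(r_\delta)|$ gives $(r_\delta/\bar r)^{N-2}>1-(N-2)\delta/\zeta$; and the definition of $B$, namely $\delta B^{N-2}/(B^{N-2}-1)=\zeta/(N-2)$, is exactly equivalent to $1-(N-2)\delta/\zeta=B^{2-N}$. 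Hence $(r_\delta/\bar r)^{N-2}>B^{2-N}$, i.e. $\bar r<B\,r_\delta$.

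For the two-sided estimate on $\bar r|v'(\bar r)|=\bar r^{2-N}|P(\bar r)|$: the lower bound is immediate from $|P(\bar r)|>r_\delta^{N-1}|v'(r_\delta)|$ and $\bar r<Br_\delta$, since then $\bar r|v'(\bar r)|=(r_\delta/\bar r)^{N-2}\,r_\delta|v'(r_\delta)|>B^{2-N}\zeta=\zeta/B^{N-2}$. For the upper bound I would integrate $P'$ once more,
\[
|P(\bar r)|=|P(r_\delta)|+\int_{r_\delta}^{\bar r}s^{N-1}g\big(v(s)\big)\,ds\le r_\delta^{N-1}|v'(r_\delta)|+\|g\|_+\,\frac{\bar r^{N}-r_\delta^{N}}{N},
\]
and then divide by $\bar r^{N-2}$ and use $r_\delta\le\bar r<Br_\delta$ (so $\bar r^{N}-r_\delta^{N}\le(B^{N}-1)r_\delta^{N}$ and $(r_\delta/\bar r)^{N-2}\le1$); the stated upper bound for $\bar r|v'(\bar r)|$ follows by these elementary estimates.

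I do not expect a genuine obstacle here: once the equation is written as $(r^{N-1}v')'=-r^{N-1}g(v)$, everything is bookkeeping of that first integral. The one point deserving care is confirming that the relevant interval is exactly $[r_\delta,\bar r]$ and that $v$ stays in $[\bar\alpha,\bar\alpha+\delta]$ on it --- needed so that $g$ is defined and the sign $g>0$ applies throughout --- but this is forced by $P$ being negative and strictly decreasing, so it is not an extra assumption. The constant $B$ is tailored precisely so that the two conclusions, $\bar r<Br_\delta$ and the control of $r|v'|$ at $r_\delta$ and at $\bar r$, drop out of the same inequality.
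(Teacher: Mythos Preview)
Your argument is correct and is essentially the paper's own proof: both pass to the first integral $r^{N-1}v'$, use its monotonicity to get the key inequality $\delta>\dfrac{r_\delta|v'(r_\delta)|}{N-2}\Big(1-(r_\delta/\bar r)^{N-2}\Big)$, read off $\bar r<Br_\delta$ from the definition of $B$, and then reuse the same first integral for the two-sided bound on $\bar r|v'(\bar r)|$. The only additions are your brief justification that $\bar r<\infty$ (which the paper leaves implicit) and a minor slip where ``$\bar r|v'(\bar r)|=(r_\delta/\bar r)^{N-2}r_\delta|v'(r_\delta)|$'' should be ``$\ge$''.
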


\begin{proof}
Since $v$ is a solution of \eqref{ge}, we have that for $r\in[r_\delta,\bar r ]$
\begin{equation}\label{m0}
	r^{N-1}v'(r)= r_\delta^{N-1}v'(r_\delta) -\int_{r_\delta}^r t^{N-1}g(v(t))dt < r_\delta^{N-1}v'(r_\delta)
\end{equation}
dividing by $r^{N-1}$ and integrating over $[r_\delta,\bar r ]$ we get
\begin{equation*}
	\delta= v(r_\delta)-v(\bar r )> \frac{r_\delta^{N-1}|v'(r_\delta)|}{N-2}\left( \frac{1}{r_\delta^{N-2}} -  \frac{1}{\bar r ^{N-2}}\right)
\end{equation*}
implying
\begin{equation}\label{m1}
	\delta> \frac{r_\delta |v'(r_\delta)|}{N-2}\left( 1 -  \frac{r_\delta^{N-2}}{\bar r^{N-2}}\right) > \frac{\zeta}{N-2}\left( 1 -  \frac{r_\delta^{N-2}}{\bar r^{N-2}}\right)>\delta \frac{B ^{N-2}}{B ^{N-2}-1}\left( 1 -  \frac{r_\delta^{N-2}}{\bar r^{N-2}}\right).
\end{equation}
Thus
$\bar r < B  r_\delta$. Moreover, from \eqref{m0},
$$\bar r|v'(\bar r)|\ge \Bigl(\frac{r_\delta}{\bar r}\Bigr)^{N-2}r_\delta |v'(r_\delta)|\ge \frac{1}{B ^{N-2}}\zeta.$$
Finally, using again \eqref{m0} and since $r_\delta<\bar r<Br_\delta$, we obtain
$$\bar r|v'(\bar r)|\le r_\delta|v'(r_\delta)|+\frac{B^N-1}{N}||g||_+ r_\delta^N \
$$

\end{proof}

\section{The operator $J$}\label{sec J}

In this section we will define the operator $J$, and prove some basic facts and how it can give information on a solution if the initial value problem \ref{ivp}. In this section we only assume that $f$ satisfies $(H_1)$.

Let us set $\rho_0=0$,
$$\rho_i(\alpha):=\sup\{r>\rho_{i-1}\ | \ (-1)^i u'(s,\alpha)>0\ \mbox{ for all }s\in(\rho_{i-1},r)\},$$
and $m_i=u(\rho_i(\alpha))$.
Note that if $u(\rho_i)=0$ with $\rho_i<\infty$, we can extend the solution as $u=0$ and thus it is a bound state solution.

 Since solutions $u(r)$ are monotone decreasing in $[0,\rho_1(\alpha)]$, we can study instead its inverse $r(s)$ for $s\in[m_1, \alpha]$ which satisfies the equation
\begin{equation}\label{eq-r}
  r''(s)-\frac{N-1}{r(s)}(r'(s))^2 - f(s)(r'(s))^3=0.
\end{equation}

\begin{defi}
  For each solution $u(r)$ with $u(0)=\alpha$ and $u'(0)=0$, and its inverse $r(s)$  we define
  \begin{equation}\label{J-def}
    J(s)= \frac{|u'|}{r}= \frac{-1}{r(s)r'(s)}.
  \end{equation}
  \end{defi}
   It follows directly from equation \eqref{eq-r} that $J$ satisfies
   \begin{equation}\label{J-prima}
    J'= \frac{1}{r^2}\left(N-\frac{f}{J}\right)=\frac{N}{Jr^2} \left(J-\frac{f}{N}\right),
  \end{equation}
with $J(\alpha)=\frac{f(\alpha)}{N}$ and $J'(\alpha)= \frac{f'(\alpha)}{N+2}< \frac{f'(\alpha)}{N}$. Thus $J>\frac{f}{N}$ for $s$ in some interval $(\alpha-\epsilon, \alpha)$.

\begin{figure}\label{Fig 1J}
\includegraphics[keepaspectratio, width=8cm]{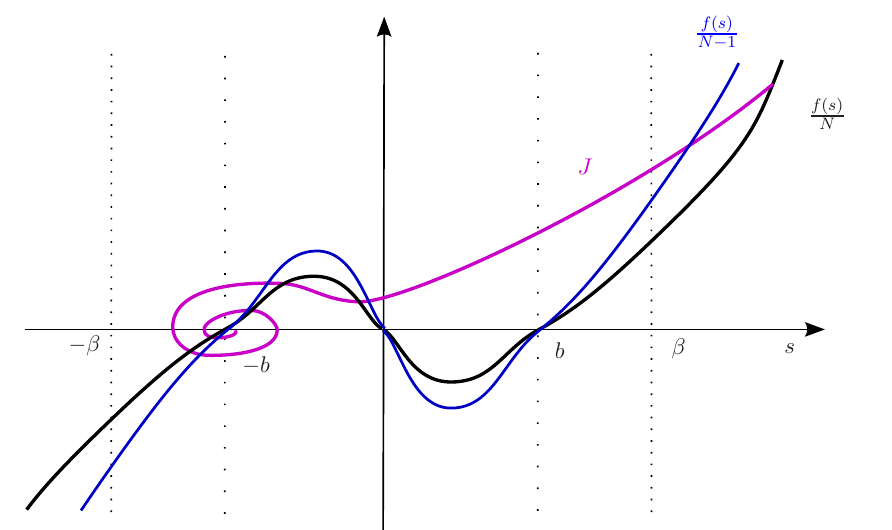}
\caption{$J$ of a solution with $\alpha\in\mathcal P_1$}
\end{figure}

 After $\rho_1(\alpha)$ the solution $u$ will be monotonously increasing in $[\rho_1(\alpha), \rho_2(\alpha)]$, we can study instead its inverse $\bar r(s)$ for $s\in[m_1, m_2]$. In this interval we define
   \begin{equation}\label{J-def-bar}
    \bar J(s)= \frac{-1}{\bar r(s)\bar r'(s)}
  \end{equation}
and note that $\bar J(s)$ corresponds to $\frac{-u'}{r}$. In this way we can define $\mathcal J(r)$ along all the solution using $r(s)$ or $\bar r(s)$ when appropriate.
 Note also that in both cases
 \begin{equation}\label{J(r)-def}
  \mathcal J(r)= \frac{-u'(r)}{r}= J(u(r))\  \mbox{ or } \ \bar J(u(r))
 \end{equation}
 thus $\mathcal J$ is differentiable in all $\R$.

Note also that if $v$ is the solution to \eqref{ivp} with $v(0)=-\alpha$, then $\bar J_u(s)=-J_v(-s)$, thus all the analysis made for $J$ is also valid for $\bar J$ with the appropriate sign changes.\\

In the next subsection we will prove that $J$ spirals counterclockwise, and from their graphs we can identify ground states, bound states, and how many times solutions cross $0$, among other things. First we will prove some basic facts.

\begin{lemma}\label{J-basicos}
  For any solution $r(s)$ on $[u(\rho_1(\alpha)), \alpha]$,  $J(s)$ and $r(s)$ satisfy
  \begin{enumerate}
    \item[(i)] $J'(s)>0$ if and only if $J(s)>\frac{f(s)}{N}$.
    \item[(ii)] $r''(s)<0$ if and only if $J(s)>\frac{f(s)}{N-1}$.
    \item[(iii)] $J(s)=0$ when $s=\rho_1(\alpha)$, and at this point $f(\rho_1(\alpha))\leq 0$.
    \item[(iv)] If for some $\bar s$,  $J(\bar s)=\frac{f(\bar s)}{N}\neq 0$, with $J(s)> \frac{f(\bar s)}{N}$ for $s>\bar s$, then $f'(\bar s)<0=J'(\bar s)$.

  \end{enumerate}
\end{lemma}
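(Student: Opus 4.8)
The plan is to read off each item directly from the ODE \eqref{J-prima} satisfied by $J$ and from the ODE \eqref{eq-r} satisfied by the inverse $r(s)$, together with the sign conventions fixed in the definitions. For (i), I would simply invoke \eqref{J-prima}: since $r>0$ and $J>0$ on the open interval $(u(\rho_1(\alpha)),\alpha)$ (because $u'<0$ there), the factor $N/(Jr^2)$ is strictly positive, so the sign of $J'$ coincides with the sign of $J-f/N$. This gives (i) on the interior; the endpoint $s=\alpha$ is the degenerate case $J(\alpha)=f(\alpha)/N$ handled by the computation already recorded after \eqref{J-prima}.

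For (ii), I would return to \eqref{eq-r}, $r''=\frac{N-1}{r}(r')^2+f(r')^3$, and factor out $(r')^2$ (which is positive): $r''=(r')^2\big(\frac{N-1}{r}+f r'\big)$. Since on $[u(\rho_1(\alpha)),\alpha]$ we have $r'<0$, and by definition $J=-1/(rr')$ so that $r'=-1/(rJ)$, substitute to get $\frac{N-1}{r}+fr'=\frac{N-1}{r}-\frac{f}{rJ}=\frac{1}{r}\big(N-1-\frac{f}{J}\big)$. Hence $r''<0$ iff $N-1-f/J<0$ iff $J(s)<f(s)/(N-1)$ when $f>0$; one must state it as in the lemma, $J>f/(N-1)$ — so I would double-check the inequality direction against the sign of $f$ near $\alpha$ (where $f>0$) and present it consistently, since this is exactly the kind of place a sign slip hides. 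This is the step I expect to be the main (if modest) obstacle: getting the direction of the inequality in (ii) right across the regions where $f$ changes sign.

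For (iii), by definition $\rho_1(\alpha)$ is where $u'$ first vanishes after $0$, i.e. $m_1=u(\rho_1(\alpha))$ is the left endpoint of the domain of $r(s)$; there $|u'|=0$ while $r=\rho_1(\alpha)>0$, so $J=|u'|/r=0$. To see $f(\rho_1(\alpha))\le 0$, evaluate the original equation \eqref{ivp} at $r=\rho_1(\alpha)$: there $u'=0$ and $u''\ge 0$ (since $u$ has a local minimum, or $\rho_1=\infty$ in which case $u\to b$ and $f(b)\le 0$ by $(H_1)$), so $u''+f(u)=0$ forces $f(m_1)=-u''\le 0$. Finally for (iv): the hypothesis says $J-f/N$ vanishes at $\bar s$ and is positive just to the right; by (i) this means $J'(\bar s)=0$ and $J'>0$ on $(\bar s,\bar s+\epsilon)$. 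Differentiating \eqref{J-prima} at $\bar s$, where $J-f/N=0$, gives $J''(\bar s)=\frac{N}{Jr^2}\big(J'-\frac{f'}{N}\big)\big|_{\bar s}=\frac{N}{Jr^2}\big(0-\frac{f'(\bar s)}{N}\big)=-\frac{f'(\bar s)}{Jr^2}$. Since $J'(\bar s)=0$ and $J'>0$ immediately afterward, we need $J''(\bar s)\ge 0$; combined with $J(\bar s)=f(\bar s)/N\neq 0$ and $r>0$, a strict sign (from the hypothesis that $J$ is strictly above $f(\bar s)/N$ to the right, ruling out $J''(\bar s)=0$ together with $f'(\bar s)=0$) yields $f'(\bar s)<0$, hence also $J'(\bar s)=0$, which is the claim. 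I would phrase this last argument carefully using the first nonvanishing derivative to avoid the edge case $f'(\bar s)=0$.
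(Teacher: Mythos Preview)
Your approach is essentially the paper's: the paper's proof is the single line ``Follows directly from equation \eqref{J-prima} and the fact that \eqref{eq-r} can be written as $r''(s)=(r'(s))^3\bigl(f(s)-(N-1)J(s)\bigr)$,'' and you simply unpack this (factoring $(r')^2$ rather than $(r')^3$, which is equivalent). Your caution about the inequality direction in (ii) is well placed---since $(r')^3<0$, the identity gives $r''<0\iff J<f/(N-1)$, so the stated direction in the lemma appears to be a typo; likewise your second-derivative treatment of (iv) and the edge case $f'(\bar s)=0$ are more careful than the paper's one-line justification.
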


\begin{proof}Follows directly from equation \eqref{J-prima} and the fact that \eqref{eq-r} can be written as
$$r''(s)=(r'(s))^3 \left(f(s)-(N-1)J(s)\right).$$
\end{proof}

\begin{figure}\label{Fig 1Ju}
\includegraphics[keepaspectratio, width=\linewidth]{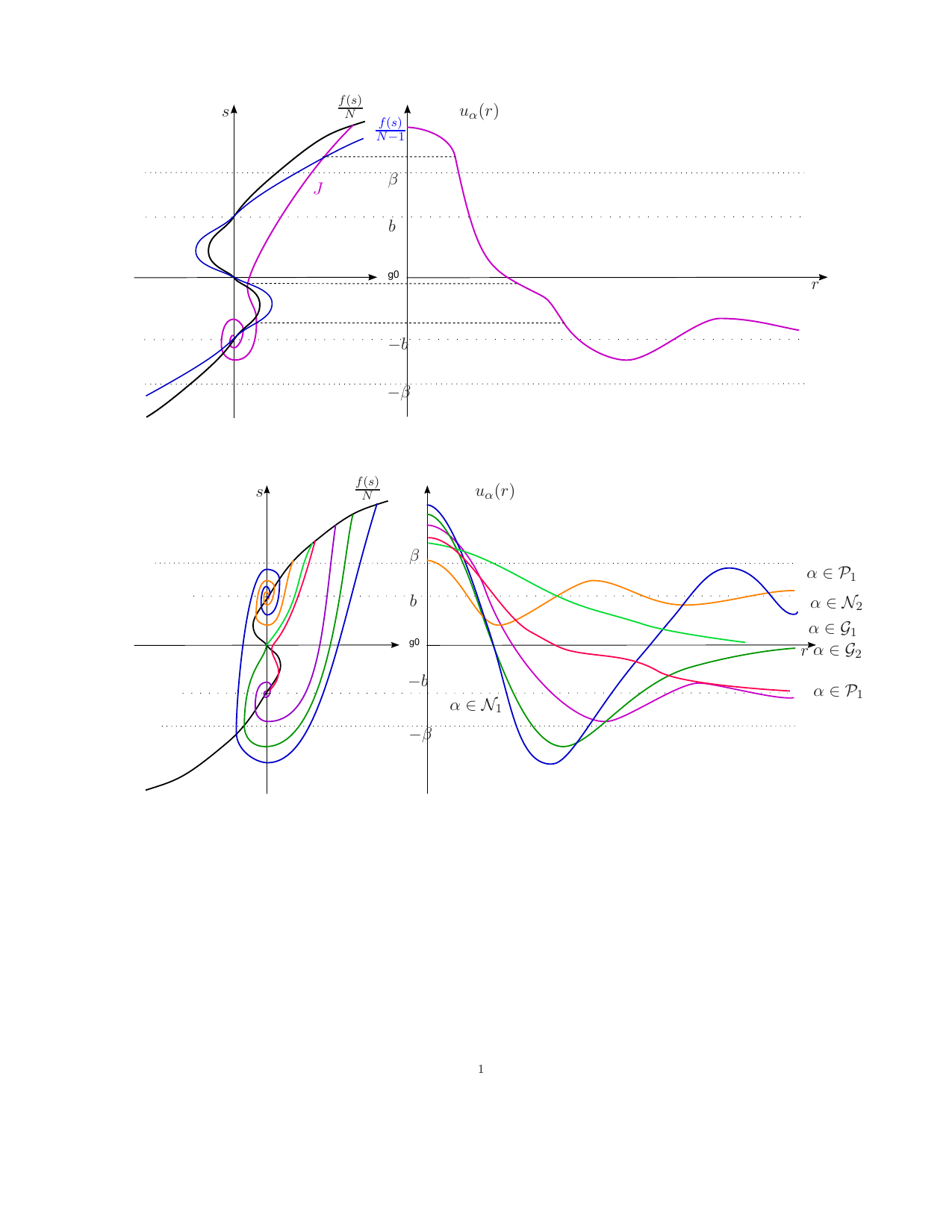}
\caption{$J(s)$ and $u(r)$ of a solution with $\alpha\in\mathcal P_1$}
\end{figure}

\subsection{General behavior of $J$ and $u$}\mbox{}\\

We will study some facts about the behaviour of $J$ and how it relates to the behaviour of $u$. We can summarize this relation in the following proposition.

\begin{proposition}\label{Prop Shape J}
For each solution $u$ of \eqref{ivp}, with $u(0)=\alpha$, the graph of the functionals $J$ and $\bar J$, or equivalently the curve $(u(r), \mathcal J(r))$ for $r\geq 0$, satisfies the following.
 \begin{enumerate}
\item[(i)] It starts at $\left(\alpha, \frac{f(\alpha)}{N}\right)$, and spirals inwards, counterclockwise, without self intersections.
\item[(ii)] It will first rotate around the origin, intersecting the $s$-axis outside $(-\beta, \beta)$. In this process, it crosses the line $s=0$ $k$ times, $0 \leq k<\infty$, indicating that $\alpha\in\mathcal N_k$  and $\alpha\notin\mathcal N_{k+1}$.
\item[(iii)] After crossing $s=0$ for the $k^{th}$ time, it rotates around $(-1)^kb$, crossing the $s$-axis $j$ times, $0 \leq j\leq\infty$.
If $k=0$, we count the initial point as $j=1$ and add the crossings after that.
     \begin{enumerate}
     \item If $j=0$, then $J(-b)=0$ (or $\bar J(b)=0$), $u$ tends asymptotically to $\pm b$, and $\alpha\in\mathcal P_{k+1}$.
     \item If $j=1$ then either $J(-b)=0$ (or $\bar J(b)=0$), $u$ tends asymptotically to $\pm b$, and $\alpha\in\mathcal P_{k+1}$, or $J(0)=0$ and $\alpha\in\mathcal G_{k+1}$, i.e. $u$ is a $k$-th bound state solution.
     \item If $j\geq 2$, $\alpha\in\mathcal P_{k+1}$ and after $j$ intersections $J$ ends in $J(\pm b)=0$ (or $\bar J(\pm b)=0$) and $u$ tends asymptotically to $\pm b$. If $j=\infty$ it spirals around $\pm b$ forever.
     \end{enumerate}
\end{enumerate}
\end{proposition}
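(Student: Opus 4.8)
\medskip

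The plan is to study the planar curve $\gamma(r)=(u(r),\mathcal J(r))$, $r\ge0$, directly. First I would differentiate $\mathcal J=-u'/r$ and use \eqref{ivp} to record that $\gamma$ solves the non-autonomous system $\dot u=-r\,\mathcal J$, $\dot{\mathcal J}=\frac1r\bigl(f(u)-N\mathcal J\bigr)$; in particular $\gamma$ meets $\{\mathcal J=0\}$ exactly at the critical points $\rho_i$ of $u$, with values $m_i=u(\rho_i)$, and meets $\{s=0\}$ exactly at the zeros of $u$. Throughout I would use two ingredients: equation \eqref{J-prima} together with Lemma~\ref{J-basicos}, and the energy $E(r)=\frac12(u'(r))^2+F(u(r))$, which is non-increasing since $E'=-(N-1)(u'(r))^2/r\le0$, and strictly decreasing on any interval where $u'\not\equiv0$.

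For (i), I would read the orientation off the signs of the system at the axis crossings. On $\{s=0\}$, $\dot u=-r\mathcal J$ has the sign of $-\mathcal J$, so $\gamma$ moves left across the upper part and right across the lower part of this axis; on $\{\mathcal J=0\}$, $\dot{\mathcal J}(\rho_i)=f(m_i)/\rho_i$, and using part (ii) below (which locates the $m_i$ outside $(-\beta,\beta)$ as long as $u$ still changes sign, and on the side of $(-1)^kb$ afterward) together with the sign of $f$, one checks that the crossing is counterclockwise in every case --- about the origin in the first phase, about $(-1)^kb$ in the last. For the absence of self-intersections I would use the energy: if $\gamma(r_1)=\gamma(r_2)$ with $r_1<r_2$ and $\mathcal J(r_1)\ne0$, then $u'(r_i)=-r_i\mathcal J(r_1)$ gives $(u'(r_2))^2>(u'(r_1))^2$, hence $E(r_2)>E(r_1)$, contradicting monotonicity of $E$; if $\mathcal J(r_1)=0$, the two critical points share a $u$-value, so $E(r_1)=F(u(r_1))=E(r_2)$, forcing $E$, hence $u$, constant on $[r_1,r_2]$, i.e.\ $\gamma$ merely at rest. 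Counterclockwise turning, no self-crossing, and the decrease of $E$ --- hence, through $F$, of the turning values $|m_i|$ --- then give the inward spiralling, while \eqref{J-prima} and Lemma~\ref{J-basicos}(iv) keep $\gamma$ from stalling on the nullcline $J=f/N$ before a rest point.

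For (ii), the decisive point is an energy dichotomy at an extremum: if $0<|m_i|<\beta$, then $E(\rho_i)=F(m_i)<0$, so $E<0$ for all larger $r$; since $F<0$ exactly on $(-\beta,\beta)\setminus\{0\}$ with $F(0)=0$, this traps $u$ in one sign with $|u|<\beta$ and forbids any further zero. Hence, for as long as $u$ still has a later zero, every intervening extremum satisfies $|m_i|\ge\beta$, so $\gamma$ meets the $s$-axis outside $(-\beta,\beta)$ in this phase; by Lemma~\ref{Lema Preliminares}(iii) the number $k$ of zeros is finite, so $\gamma$ winds $k$ times about the origin, and the definitions of $\mathcal N_k$ and $\mathcal G_k$ give $\alpha\in\mathcal N_k\setminus\mathcal N_{k+1}$ (or $\alpha\in\mathcal G_m$ if a double zero occurs first). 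After the $k$-th zero $u$ keeps the sign $(-1)^k$, which I take to be $+$ below.

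For (iii), I would split on $j$, the number of critical points after the last zero, using $E_\infty=\lim_r E(r)\in[F(b),0]$. If $j=0$, $u$ is monotone --- increasing, having just left $0$ --- and bounded, so $u\to L>0$, $u'\to0$, $f(L)=0$, and the sign condition on $f$ forces $L=b$, giving $\alpha\in\mathcal P_{k+1}$. If $j\ge2$, some critical point is a local minimum $m\in(0,b]$, so $F(m)<0$, whence $E<0$ afterward, $u$ cannot return to $0$, and $\alpha\in\mathcal P_{k+1}$; if moreover $j=\infty$, the maxima decrease and the minima increase to values $m_-<b<m_+$ with $F(m_\pm)=E_\infty$, and $E_\infty>F(b)$ is impossible, for then the oscillation has a fixed amplitude and bounded period and $\int^{\infty}(u')^2/r\,dr=\infty$, forcing $E\to-\infty$; hence $E_\infty=F(b)$, $u\to b$, and $\gamma$ spirals into $(b,0)$ forever. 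If $j=1$ there is a single extremum: if $E_\infty<0$ we are back in the previous case and $u\to b$; if $E_\infty=0$, then $\frac12(u')^2+F(u)=0$ at the next return to $\{s=0\}$ (possibly only as $r\to\infty$) forces $u'=0$ there, i.e.\ $J(0)=0$, $\alpha\in\mathcal G_{k+1}$, and $u$ is a bound state. The step I expect to be the real obstacle is precisely this last asymptotic analysis --- showing that an eventually one-signed solution can converge only to $0$ or $\pm b$, in particular ruling out convergence to an interior zero of $f$ in $(0,b)$, and disposing of the $j=\infty$ case cleanly --- which forces one to invoke the precise sign structure of $f$ on $[0,b]$ together with standard ODE asymptotics; everything else reduces to bookkeeping with $E$ and with the definitions of $\mathcal N_k$, $\mathcal G_k$, $\mathcal P_k$.
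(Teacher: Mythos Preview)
Your approach is essentially the paper's. The core device is identical: monotonicity of the energy $E(r)=\tfrac12(u')^2+F(u)$ (the paper writes it as $I(s)=\tfrac12 r^2J^2+F$) to rule out self-intersections of $(u,\mathcal J)$, together with the sign information from \eqref{J-prima}/Lemma~\ref{J-basicos} and Lemma~\ref{Lema Preliminares}(iii) to read off the counterclockwise spiralling and the finiteness of sign changes. The only differences are presentational. You parametrize the whole curve by $r$ and argue with the planar system $\dot u=-r\mathcal J$, $\dot{\mathcal J}=\tfrac1r(f(u)-N\mathcal J)$, while the paper alternates between the branches $J(s)$ and $\bar J(s)$; your energy-dichotomy argument for (ii) --- that $|m_i|<\beta$ forces $E=F(m_i)<0$ and hence traps $u$ in one sign --- makes the ``outside $(-\beta,\beta)$'' claim cleaner than the paper, which argues only via $f(m_i)\le0$; and in (iii) you supply more asymptotic detail (the $j=\infty$ integral argument) than the paper, which is content to say the curve ``spirals around $\pm b$ forever'' without forcing $u\to b$. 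None of this changes the substance: same key lemma, same decomposition, same proof.
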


\begin{figure}\label{Fig variasJu}
\includegraphics[keepaspectratio, width=\linewidth]{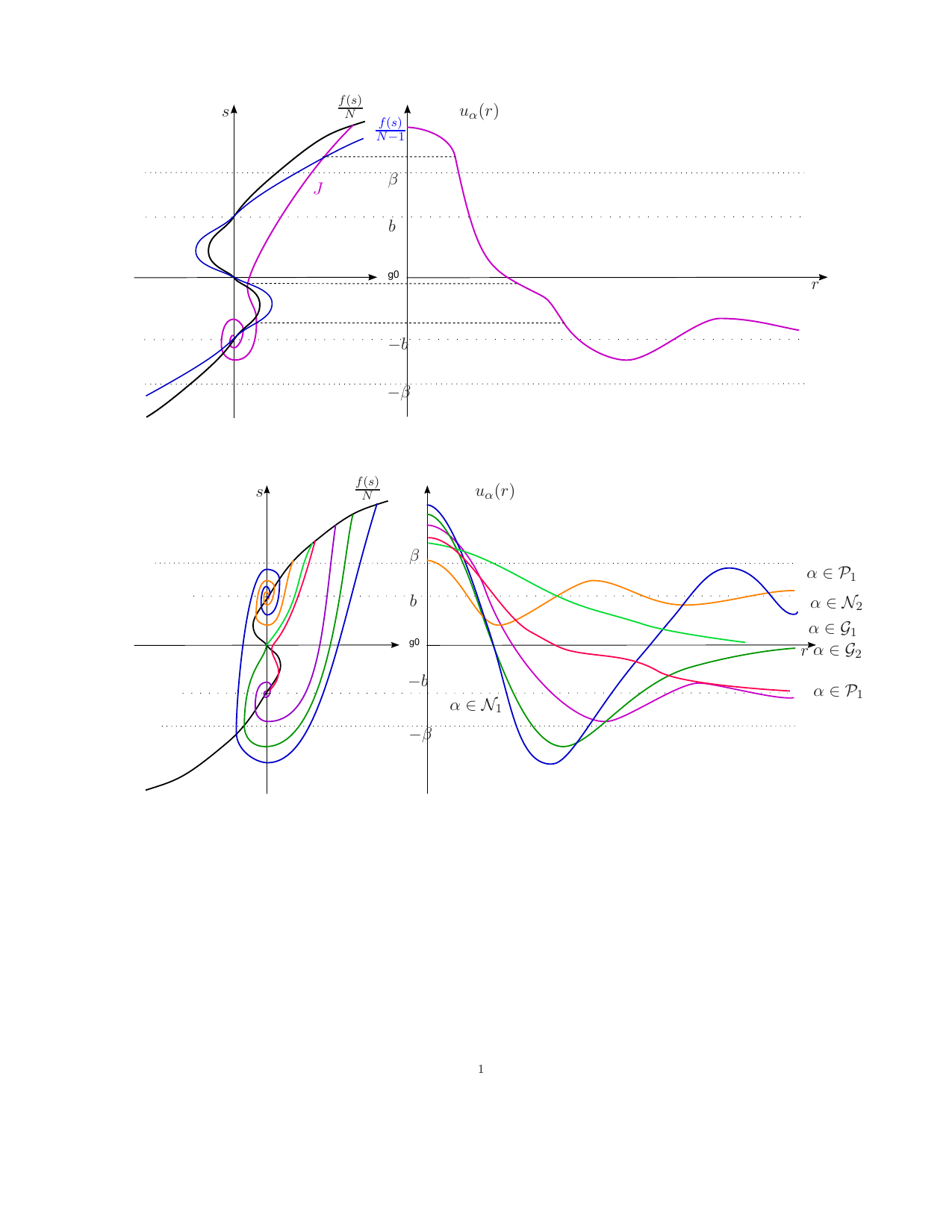}
\caption{$\mathcal J(s)$ and $u(r)$ for different initial conditions}
\end{figure}

\begin{proof}
First, we will consider the classical energy functional $I(r)=\frac{|u'|}{2}+F(u)$, now in terms of $s$.

The energy $I(s)= \frac{ r^2 J^2}{2}+F$ has $I'(s)=(N-1)J(s)$, so $I(u(r))$ is decreasing with respect to $r$.

Also, if for $r_1< r_2$ we have $u(r_1)=u(r_2)$ and $\mathcal J(r_1)=\mathcal J(r_2)$ then
$$I(s(r_1))= \frac{ r_1^2 {\mathcal J}^2(s)}{2}+F(s) <\frac{ r_2^2 {\mathcal J}^2(s)}{2}+F(s) =I(s(r_2)), $$
a contradiction. Therefore $(u(r),\mathcal J(r))$ has no self intersections, proving $(i)$.

 \bigskip

Since $I(s)=J^2r^2(s)+F(s)< F(\alpha)$, $J$ must reach $0$ in $(-\alpha,\alpha)$. Moreover, by Lemma \ref{J-basicos}$(iii)$ it must reach $0$ when $f\leq 0$, therefore it has three options:
\begin{enumerate}
\item $J(s)=0$ for some $s\in(0,b]$, then $\alpha\in \mathcal P_1$.
\item $J(0)=0$, then $u$ is a ground state solution,  $\alpha\in \mathcal G_1$.
\item $J(0)>0$, then  $\alpha\in \mathcal N_1$.
\end{enumerate}
We will continue the study of the third case.

After $J$ reaches $0$, it has to reach $J(s)=0$ with $s\in (-\alpha,-b]$.

If $J(-b)=0$ then $u''=u'=0$ and by unique continuation $r=\infty$, therefore $u$ decreases asymptotically to $-b$.

If $J(m)=0$ with $m\in (-\alpha,-b)$,  $u'(r(m))=0$ with $u''(r(m))=-f(m)>0$, a minimum,  and we use $\bar r$   to study the solution going up.
Since $\bar J>f/N$, $\bar J'<0$ until it reaches $\bar J=f/N$ with $\bar J'=0<f'/N$, therefore they cross and $\bar J'>0$ there after.

As in Lemma \ref{J-basicos} $(iv)$, if for some $\bar s$,  $\bar J(\bar s)=\frac{f(\bar s)}{N}$ again, then $f'(\bar s)<0$. Therefore $\bar J<f/N$ until either $s=0$ or $\bar J=0$, since $f'>0$ or $f>0$ for $s<0$.

Therefore $\bar J$ has three options:
\begin{enumerate}
\item $\bar J(s)=0$ for some $s\in [-b,0)$, then $\alpha\in \mathcal P_2$.
\item $\bar J(0)=0$, then $u$ is a ground state solution,  $\alpha\in \mathcal G_2$.
\item $\bar J(0)<0$, then $\alpha\in \mathcal N_2$.
\end{enumerate}
For solutions in $\mathcal N_2$ we can repeat the same argument, until the solution is in $\mathcal G_k$ or $\mathcal P_k$.
This process ends in finite steps since, by Lemma \ref{Lema Preliminares}$(iii)$, there are no $\alpha\in \mathcal N_k$ for all $k\in \mathbb{N}$.

\medskip
$(iii)$ Solutions in $\mathcal P_k$:

We will write the argument for a solution in $\mathcal P_1$ for simplicity, but it is the same for any  $\mathcal P_k$.

If $J(b)=0$ then $u''=u'=0$ and by unique continuation $r=\infty$, therefore $u$ decreases asymptotically to $b$.

If $J(m)=0$ for $m\in(0,b)$, $u$ has a minimum and we use $\bar r$ to study the solution going up. Since $\bar J>f/N$, $\bar J'<0$ until it reaches $\bar J=f/N$ with $\bar J'=0<f'/N$, therefore they cross and $\bar J'>0$ there after with $\bar J<f/N$ until $J=0$, with $f>0$ ($s\geq b$).

If $\bar J(b)=0$  $u$ increases asymptotically to $b$. If  $\bar J(M)=0$ for $M\in(b,\alpha)$, $u$ has a maximum and we use $ r$ to study the solution going down.  Since $J<f/N$, $J'>0$ until it reaches $J=f/N$ with $J'=0>f'/N$, therefore they cross and $J'>0$ there after with $J>f/N$ until $J=0$, with $f<0$ ($s\leq b$).

Repeating this argument, $J$ will spiral inward around $b$, without self intersections, until either $J$ ends in $J(b)=0$, thus $u$ tends asymptotically to $b$, or forever thus $u$ oscillates forever with decreasing amplitude.
\end{proof}

We finish this section with the observation that the sets $\mathcal N_k$ are strictly nested, they are nested by definition and the following lemma shows that $\mathcal N_k \neq \mathcal N_{k+1}$ if they are not empty. This is done by showing that solutions close to a bound state solution that cross $0$ will have negative energy before reaching $J=0$.

\begin{lemma}\label{Lema I<0}
 Let $\alpha_*^k$ be a $k^{th}$-bound state solution then there is $\epsilon >0$ such that if $\bar\alpha \in [\alpha_*^k-\epsilon,\alpha_*^k+\epsilon]\cap \mathcal N_k$ then $|u(\bar\alpha, r)|<\beta $ for all $r>Z_k$.
\end{lemma}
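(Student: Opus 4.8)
The plan is to use the energy functional $I$ together with the geometric picture of the $J$–spiral established in Proposition \ref{Prop Shape J}. Fix the $k^{th}$–bound state solution $u_*$ with initial condition $\alpha_*^k$; by definition $\alpha_*^k\in\mathcal G_k$, so $\mathcal J_*$ reaches the origin after exactly $k-1$ crossings of $\{s=0\}$, and in particular the energy $I$ of $u_*$ decreases to $I(u_*(\infty))=F(0)=0$. Since $I'(s)=(N-1)J(s)\ge 0$ along the branch where the solution is monotone (and the analogous statement with $\bar J$ on the increasing branches), $I$ is in fact strictly monotone in $r$ along $u_*$ and stays strictly above $0$ until $r=\infty$; so for any $r_*$ large but finite we have $I(u_*(r_*))=\tfrac12 r_*^2\mathcal J_*(r_*)^2+F(u_*(r_*))>0$ and $|u_*(r_*)|$ can be made as small as we like, in particular $<\beta/2$, with $\mathcal J_*(r_*)$ small as well.

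The key step is a continuous–dependence argument at the point where $u_*$ has just completed its last sign change, i.e. near $Z_k(\alpha_*^k)$. First I would note that for $\bar\alpha\in\mathcal N_k$ near $\alpha_*^k$, the solution $u(\cdot,\bar\alpha)$ also crosses zero for the $k^{th}$ time at some $Z_k(\bar\alpha)$, and by continuous dependence on initial data on compact $r$–intervals (stated in Section \ref{prel}) together with the openness of $\mathcal N_k$ (Lemma \ref{Lema Preliminares}(i)), $Z_k(\bar\alpha)\to Z_k(\alpha_*^k)$ and $\big(u(Z_k(\bar\alpha),\bar\alpha),u'(Z_k(\bar\alpha),\bar\alpha)\big)\to\big(0,u_*'(Z_k(\alpha_*^k))\big)$ as $\bar\alpha\to\alpha_*^k$. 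Hence the energy $I$ of $u(\cdot,\bar\alpha)$ evaluated at $Z_k(\bar\alpha)$ converges to $\tfrac12|u_*'(Z_k(\alpha_*^k))|^2>0$… but that is positive, not negative, so a direct comparison at $Z_k$ is not enough. The actual mechanism, indicated in the remark preceding the lemma, is that one must push slightly past $Z_k$: I would instead compare at a fixed radius $\bar r$ chosen so that $u_*(\bar r)$ is a \emph{local extremum} of $u_*$ on its last loop with $|u_*(\bar r)|<\beta$, and where $I(u_*(\bar r))=F(u_*(\bar r))$; since $F(s)<0$ for $0<|s|<\beta$, this value is strictly negative. By continuous dependence, $u(\cdot,\bar\alpha)$ has an extremum near $\bar r$ with value near $u_*(\bar r)$, hence with energy near $F(u_*(\bar r))<0$; shrinking $\epsilon$ makes this energy $<\tfrac12 F(u_*(\bar r))<0$.

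Once $u(\cdot,\bar\alpha)$ has negative energy at some radius past $Z_k$, the rest follows from the shape of the spiral: negative energy means $I(s)=\tfrac12 r^2\mathcal J^2+F(s)<0$ forces $F(s)<0$, hence $|s|<\beta$, at every subsequent radius, because $I$ is nonincreasing in $r$ from that point on and $F\ge 0$ outside $(-\beta,\beta)$. In particular $|u(\bar\alpha,r)|<\beta$ for all $r$ beyond that radius, and since on $(Z_k,\bar r)$ the solution is monotone toward its extremum with $|u|\le|u_*(\bar r)|+o(1)<\beta$ as well, we get $|u(\bar\alpha,r)|<\beta$ for all $r>Z_k$, which is the assertion.

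The main obstacle I anticipate is making the choice of the comparison radius $\bar r$ robust: one needs $u_*$ genuinely to have a local extremum (or at least to dip strictly inside $(-\beta,\beta)$ with controlled $\mathcal J$) on the loop immediately after $Z_k$, and to rule out the degenerate possibility that $u_*$ approaches its extremal behaviour only asymptotically. This is exactly where the description in Proposition \ref{Prop Shape J}(iii) — that after the last zero crossing $\mathcal J$ spirals around the origin and the energy is strictly decreasing — is used to guarantee that such a finite radius $\bar r$ with $F(u_*(\bar r))<0$ exists, after which the continuous–dependence estimate and the monotonicity of $I$ close the argument.
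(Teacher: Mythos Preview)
There is a genuine gap in your argument. The key step hinges on finding a radius $\bar r$ at which the bound state $u_*$ itself has a local extremum with $|u_*(\bar r)|<\beta$, so that $I(u_*(\bar r))=F(u_*(\bar r))<0$. But no such radius exists. Since $\alpha_*^k\in\mathcal G_k$, the solution $u_*$ reaches $0$ at $Z_k(\alpha_*^k)$ with $u_*'(Z_k)=0$ (possibly $Z_k=\infty$); after that it is extended by $0$ and has no further extrema. Before $Z_k$, every local extremum $m$ of $u_*$ satisfies $F(m)=I(m)\ge I(Z_k)\ge 0$, hence $|m|\ge\beta$. In fact the energy of $u_*$ is \emph{strictly positive for all finite $r$} and tends to $0$ only at $Z_k$. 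So there is no finite comparison radius on $u_*$ with negative energy, and Proposition \ref{Prop Shape J}(iii) does not supply one: that clause describes $\mathcal P_{k+1}$ solutions spiralling around $\pm b$, not $\mathcal G_k$ bound states.

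You might try to repair this by looking instead at the first extremum of $u(\cdot,\bar\alpha)$ after $Z_k(\bar\alpha)$ and arguing that its value is close to $0$, hence has negative $F$. The obstruction is exactly the one you flag but do not resolve: that extremum occurs at $T_k(\bar\alpha)$, which need not stay bounded as $\bar\alpha\to\alpha_*^k$ (and certainly escapes to infinity when $Z_k(\alpha_*^k)=\infty$), so continuous dependence on compact $r$-intervals gives no control there. The paper circumvents this by working in the $s$-variable with the weighted functional $H(s)=r^{2(N-1)}I(s)$, for which $H'(s)=-2(N-1)r^{2(N-1)-2}F(s)/J(s)>0$ on $(-\beta,\beta)$; an integral estimate of $H$ over a fixed $s$-interval such as $[-\beta/2,-\beta/4]$ then produces the contradiction, handling the cases $Z_k<\infty$ and $Z_k=\infty$ separately. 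This weighted-energy computation is the missing ingredient in your approach.
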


\begin{proof} We will work with an odd $k$, the even case differ only in some signs. Suppose the lemma is not true, then there exists a sequence $\alpha _i$ such that $\alpha _i \rightarrow \alpha_*^k$ and  $u( \alpha_i, \bar r_i)= -\beta $ for some $\bar r_i$.

   Let $H(s)= r^{2(N-1)}I(s)$, then for $s\in(-\beta,\beta)$ $$H'(s)=r^{2(N-1)}(N-1)J(s)-2(N-1)\frac{ r^{2(N-1)-2}}{J(s)}\left(\frac{ r^2 J^2}{2}+F\right) = -2(N-1) r^{2(N-1)-2}\frac{F(s)}{J(s)} >0.$$

 For $\alpha =\alpha_*^k$, since $H_*(s)>0$  and decreasing (backwards) the limit $\lim_{s\rightarrow 0}H_*(s)=L$ exists and is greater or equal $0$.
By continuity and using the fact that  $H_i'(s)>0 $ if $\beta <s<-\beta$ , we have that given $\epsilon>0$ there is $s^*>0$ such that  for  $\alpha_i$, with  large $i$,  $H_i(s)< L+\epsilon$ for all $s \in [-\beta,  s^*]$.

 As, for  $\alpha =\alpha^*$,  $\lim_{s\rightarrow 0}I_*(s) =0$ we may choose $s^*>0$ so that we also have $I_i(s)<\epsilon $ for  $\alpha_i$, with  large $i$,  for all $s \in [-\beta,  s^*]$.

 Let $F_1>0$, $F_2$  such that $-F_2 \leq F(s)\leq -F_1$ if  $s \in [-\beta/2,  -\beta/4]$.  From $I_i(s)<\epsilon $  we obtain  $ r_i^2 J_i^2 < 2 F_2 $ for  $s \in [-\beta/2,  -\beta/4]$ if $i$ is large.
So $$L+\epsilon >H_i(-\beta/4) = H_i(-\beta/2)+ \int_{-\beta/2}^{-\beta/4} -2(N-1) r^{2(N-1)-2}\frac{F(s)}{J_i(s)} ds
 $$ $$\geq  H_i(-\beta/2)+  (r_i(-\beta/4))^{2(N-1)} \frac{F_1}{F_2}\beta/4.$$

 If $\lim_{s\to 0}r_*(s)=\infty$ then $r_i(-\beta/4)> r_i(0)\to \infty$, by continuity of the solutions, and the  last term tends to infinity as $i$ is large. A contradiction.

 If $r_*(0)=r^*<\infty$,  since for  $\alpha =\alpha^*$,   $\lim_{s\rightarrow 0}J_*(s) =0$ we must have $L=0$ and, by continuity of the solutions,
  $$\epsilon \geq (r_i(-\beta/4))^{2(N-1)} \frac{F_1}{F_2}\beta/4 \geq ( r_i(0))^{2(N-1)} \frac{F_1}{F_2}\beta/4\to (r^*)^{2(N-1)} \frac{F_1}{F_2}\beta/4$$
 that gives a contradiction if we choose $\epsilon$ small enough.

 \end{proof}

\subsection{ Behaviour of $J/f$}\label{Sec J/f} \mbox{ }

We want to study the behavior of $\frac{J}{f}$, for this we note that it satisfies $\frac{J}{f}(\alpha)=1/N$,  $\left(\frac{J}{f}\right)'(\alpha)= \frac{-2}{N(N+2)}\frac{f'}{f}(\alpha)$ and
 \begin{equation}\label{J/f- eq}
    \left(\frac{J}{f}\right)'= \frac{-1}{Jr^2}\left[f'r^2 \left(\frac{J}{f}\right)^2 -N\left(\frac{J}{f}\right)+1\right].
  \end{equation}
Thus, $\left(\frac{J}{f}\right)'=0$ for
$$\psi_1(s) = \frac{N-\sqrt{N^2-4f'r^2} }{2f'r^2} \quad \mbox{ and } \psi_2(s) = \frac{N+\sqrt{N^2-4f'r^2} }{2f'r^2},  $$
if they exist, and $\left(\frac{J}{f}\right)'>0$ if and only if  $\psi_1< \left(\frac{J}{f}\right)< \psi_2$.
Note that $\psi_1$ and $\psi_2$ depend on the solution, and they exist only when $f'r^2< N^2/4$.

In $\alpha$ we have $\psi_1(\alpha)=\frac{1}{N}= \frac{J}{f}$ and $\psi_2(\alpha)=\infty$.

Also $$\psi_1'(s)= \left(\psi_1-\frac{1}{N}\right) \frac{N}{\sqrt{N^2-4f'r^2}} \left[ \frac{f''}{f'}- \frac{2}{Jr^2}\right],$$
with $\psi_1'(\alpha)= \frac{-2}{N^2}\frac{f'}{f}(\alpha)>\left(\frac{J}{f}\right)'(\alpha).$

\section{ Comparing solutions} \mbox{ } \\

Let $u$ and $v$ be solutions to the initial value problem \eqref{ivp} with $u(0)=u_0> v_0=v(0)$, and let $r_u$ and $r_v$ be the inverses of these solutions. At $v_0$ we have $r_u>0=r_v$, $r'_u>-\infty=r'_v$ and $J_u>\frac{f(v_0)}{N}=J_v$. We want to compare this solutions, and prove that when $f$ is subcritical $J_u(s)>J_v(s)$ until $s=0$, or until their energy $I_u, I_v <0$ and thus they do not reach $0$.\\

To compare these solutions we will use the following Pohozaev type functional, introduced by Erbe and Tang in \cite{et}.

Let
\begin{equation}\label{PdeET}P(s)=r^{N}(s)\left(2N\frac{F}{f}(s)J(s) -r^{2}(s)J^2(s)
-2F(s)\right),\quad \end{equation}
with
\begin{equation*}P'(s)=\frac{d P}{d s}(s)=\left(2N\Bigl(\frac{F}{f}\Bigr)'(s)-(N-2)\right){r^{N}(s)}J(s).
\end{equation*}
Note that if $f$ satisfies $(H_2)$ then $P'(s)>0$ for all $s\in[\beta, \alpha_0]$, and $P(\alpha_0)=0$, thus $P(s)<0$.

We will use these ideas to compare solutions from a point onward, not necessarily form $r=0$,  so we will prove this results for solutions that, at a point $\bar s$, have $r_u>r_v$,  $J_u>J_v$ and $P_u<0\leq P_v$.

\begin{proposition}\label{Prop J comparan}
 Let $f$ be a function that satisfies $(H_1)$ and $(H_2)$, and let $u$ and $v$ be solutions to the initial value problem \eqref{ivp}, that at some $\bar s$ satisfy $r_u(\bar s)>r_v(\bar s)$,  $J_u(\bar s)>J_v(\bar s)$ and $P_u(\bar s)<0\leq P_v(\bar s)$.  Then $r_u'(s)>r_v'(s)$ and $J_u(s)>J_v(s)$  for all $s>-\beta$ with $I_v(s)\geq 0$.

In particular, if  $v$ reaches $0$, then  $r_u'(s)>r_v'(s)$ and $J_u(s)>J_v(s)$ for all $s\geq 0$.
\end{proposition}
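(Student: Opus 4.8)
The plan is to run a continuity/connectedness argument on the set of $s$ where the three inequalities $r_u > r_v$, $J_u > J_v$, and the ordering of derivatives persist, using the Pohozaev functional $P$ to prevent the curves $J_u$ and $J_v$ from touching. First I would observe that the hypotheses at $\bar s$ are open conditions, so there is an interval $(\bar s - \eta, \bar s]$ on which $r_u > r_v$ and $J_u > J_v$. I would then let $s_0 = \inf\{\sigma < \bar s : r_u > r_v \text{ and } J_u > J_v \text{ on } (\sigma, \bar s]\}$ and argue by contradiction that $s_0 \le -\beta$ or $I_v(s_0) < 0$; the point is to show that as long as $I_v \ge 0$ the inequalities cannot degenerate. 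Recall $J = -1/(rr')$, so $J_u > J_v$ together with $r_u > r_v$ is equivalent to $r_u'$ and $r_v'$ comparing in a definite way (both negative, $|r_u'| < |r_v'|$ after accounting for the sizes of $r_u, r_v$); I would first make this equivalence precise so that the two stated conclusions, $r_u' > r_v'$ and $J_u > J_v$, are really one statement.

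The key mechanism is this: suppose at some $s_0 > -\beta$ with $I_v(s_0) \ge 0$ the curves first meet, i.e. $J_u(s_0) = J_v(s_0) =: j$ and (by minimality) $r_u(s_0) \ge r_v(s_0)$. If $r_u(s_0) > r_v(s_0)$, then from $J = -1/(rr')$ we get $r_u'(s_0) \ne r_v'(s_0)$, and feeding this into the ODE \eqref{J-prima}, written as $J' = \frac{N}{Jr^2}(J - f/N)$, shows $J_u'(s_0) \ne J_v'(s_0)$ with the sign forcing $J_u > J_v$ just to the left — contradicting that $s_0$ is the infimum unless in fact $J_u'(s_0) < J_v'(s_0)$, which I would rule out using $r_u(s_0) > r_v(s_0)$ in the factor $1/r^2$. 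The genuinely delicate case is $r_u(s_0) = r_v(s_0)$ and $J_u(s_0) = J_v(s_0)$ simultaneously: then the two solutions agree to first order at $s_0$, and I would use the energy identity $I(s) = \tfrac12 r^2 J^2 + F$ — so $I_u(s_0) = I_v(s_0)$ — together with the Pohozaev functional to derive a contradiction. Specifically, $P(s) = r^N(2N\tfrac{F}{f}J - r^2 J^2 - 2F)$ depends only on $s$, $r$, $J$ (not on derivatives), so $P_u(s_0) = P_v(s_0)$; but by $(H_2)$ and the formula $P'(s) = (2N(F/f)'(s) - (N-2))\,r^N J$, with $J > 0$ on the relevant range, $P$ is strictly monotone, and since $P_u(\bar s) < 0 \le P_v(\bar s)$ we would get $P_u(s_0) < P_v(s_0)$ by integrating $P'$ backwards — but here I need to be careful because $r_u \ne r_v$ on $(s_0,\bar s]$ changes the comparison of the two $P'$ integrands. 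The cleaner route is: $P_u < 0 \le P_v$ is preserved along the flow (since each $P$ is individually monotone in the direction that keeps it on its side of $0$), hence $P_u(s_0) < 0 \le P_v(s_0)$, and then $P_u(s_0) = P_v(s_0)$ is immediately contradictory. So the simultaneous-equality case cannot occur.

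After ruling out both cases, the infimum $s_0$ can only be reached when $I_v(s_0) = 0$ (so $v$ turns around and never reaches $0$) or when $s_0 = -\beta$. This gives the first assertion. For the "in particular" clause: if $v$ reaches $0$, then $I_v(s) \ge I_v(0) \ge 0$ for all $s \ge 0$ since $I$ is monotone (by $I'(s) = (N-1)J(s) \ge 0$ as established in the proof of Proposition \ref{Prop Shape J}), so the first assertion applies on all of $[0,\bar s]$, and by running the same argument forward from $\bar s$ one extends to all $s \ge 0$ in the domain. I expect the main obstacle to be the bookkeeping around the Pohozaev functional when $r_u \ne r_v$: one must verify that "$P_u < 0 \le P_v$" is genuinely an invariant of the pair of flows and not something that can flip when the radii separate, and that the monotonicity of $P'$ from $(H_2)$ is being applied on the correct $s$-range $[\,-\beta$ or $0,\ \bar s\,]$ where $J > 0$. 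A secondary technical point is handling the places where $f$ fails to be $C^1$ (finitely many, by $(H_1)$), where one argues by continuity across those points since $J$ and $r$ remain $C^1$.
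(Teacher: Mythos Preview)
Your continuity setup has the right spirit but tracks the wrong invariants and misses a second functional that the argument actually needs.

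First, the contact analysis is inverted. If $J_u(s_J)=J_v(s_J)$ with $J_u>J_v$ on $(s_J,\bar s]$, then $(J_u-J_v)'(s_J)\ge0$; since $J>f/N$ on this branch, the formula $J'=\tfrac{1}{r^2}(N-f/J)$ forces $r_u(s_J)\le r_v(s_J)$, not $\ge$. Because $r_u(\bar s)>r_v(\bar s)$ and $r_u'(\bar s)>r_v'(\bar s)$, there must then be a point $s_1\in(s_J,\bar s)$ with $r_u(s_1)=r_v(s_1)$ and a point $s_2\in(s_J,s_1)$ with $r_u'(s_2)=r_v'(s_2)$. The paper's proof is organized around ruling out $s_2$, not the $J$-contact. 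Your scheme, which follows the set where \emph{both} $r_u>r_v$ and $J_u>J_v$ hold, breaks down already at $s_1$, where $r_u=r_v$ but $J_u>J_v$ still --- a case you do not treat. (The ``simultaneous equality'' case you do treat is in fact trivial: equal $r$ and equal $J$ give equal $r'$, hence identical solutions by ODE uniqueness.)

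Second, the claim ``$P_u<0\le P_v$ is preserved along the flow'' is false in the direction you need. On $[\beta,\bar s]$ one has $P'>0$ by $(H_2)$, so as $s$ decreases both $P_u$ and $P_v$ decrease; $P_u<0$ is preserved, but $P_v\ge0$ is not. The paper (Lemma~\ref{Lema paso P}) does not try to keep $P_v\ge0$: it finds a point $\sigma\in[s_2,s_1]$ where $r_u^2J_u=r_v^2J_v$, introduces the scaling $D=r_u^{N-2}(\sigma)/r_v^{N-2}(\sigma)<1$, and compares $P_u-D\,P_v$ to reach a contradiction at~$\sigma$.

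Third, and most seriously, $(H_2)$ gives you nothing for $s<\beta$, so the $P$-functional alone cannot carry the argument down to $-\beta$. On $[s_I,\beta]$ the paper switches to the Peletier--Serrin functional $W(s)=r\sqrt{2I}$ (Lemma~\ref{Lema paso W}), whose derivative is a function $h(s,rJ)$ that is monotone in $rJ$ precisely because $F<0$ there. Without $W$ or some substitute, the interval $s\in(-\beta,\beta)$ --- which is exactly where the ``in particular'' clause lives --- is not covered, and your proof does not close.
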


Note that, if at some $s_J$ we have $J_u(s_J)=J_v(s_J)$ for the first time ( $J_u>J_v$ in $(s_J, \bar s)$),  then $ \frac{1}{r_u^2}\left(N-\frac{f}{J}\right)=J_u'>J_v'= \frac{1}{r_v^2}\left(N-\frac{f}{J}\right) $ and since $J>\frac{f}{N}$ we have $r_u(s_J)<r_v(s_J)$ and $r_u'(s_J)<r_v'(s_J)$. Since $r_u(\bar s)>r_v(\bar s)$ and $r_u'(\bar s)>r_v'(\bar s)$, there must be  an $s_1$ with $r_u(s_1)=r_v(s_1)$, at which point $r_u'(s_1)>r_v'(s_1)$, and an $s_2$ with $r_u'(s_2)=r_v'(s_2)$, where $s_J<s_2<s_1<\bar s$.
Note also that at $s_2$ we have $I_u(s_2)=I_v(s_2)$.

We will prove this proposition by steps, showing that $s_2$ cannot exist in different intervals, unless $I_v<0$.

\begin{lemma}\label{Lema paso P}
Using the notation above, and the conditions of Proposition \ref{Prop J comparan}, there is no $s_2$ in $[\beta, \bar s]$ and $r^2_uJ_u(\beta)>r_v^2J_v(\beta)$.

\end{lemma}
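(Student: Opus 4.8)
The plan is to run a standard continuity/contradiction argument inside the interval $[\beta,\bar s]$, using the Pohozaev functional $P$ as the obstruction. Suppose for contradiction that such an $s_2$ exists, and take the largest such $s_2 < \bar s$; on $(s_2,\bar s]$ we have $r_u > r_v$, $r_u' > r_v'$, $J_u > J_v$, and at $s_2$ we have $r_u'(s_2)=r_v'(s_2)$ together with $I_u(s_2)=I_v(s_2)$ (as noted just before the lemma). The first step is to translate the equality of derivatives at $s_2$ into information about $r$ and $J$: from $J = -1/(rr')$ and $r_u'(s_2)=r_v'(s_2)<0$, the inequality $r_u(s_2)>r_v(s_2)$ would force $J_u(s_2) < J_v(s_2)$, contradicting $J_u > J_v$ on the open interval unless $s_2 = s_J$; but at $s_J$ the computation in the remark shows $r_u'(s_J) < r_v'(s_J)$, contradicting $r_u'(s_2)=r_v'(s_2)$ being the \emph{first} coincidence below $\bar s$. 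So I would instead locate $s_2$ as the point described in the remark, with $s_J < s_2 < s_1 < \bar s$, and $r_u(s_1)=r_v(s_1)$.

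The heart of the matter is to compare the two Pohozaev functionals at a common value of $r$. At $s_1$ we have $r_u(s_1)=r_v(s_1)=:R$, so
\[
P_u(s_1)-P_v(s_1) = R^{N}\Bigl(2N\tfrac{F}{f}(s_1)\bigl(J_u(s_1)-J_v(s_1)\bigr) - R^{2}\bigl(J_u^2(s_1)-J_v^2(s_1)\bigr)\Bigr),
\]
and the sign of this is governed by the factor $2N\frac{F}{f}(s_1) - R^2\bigl(J_u(s_1)+J_v(s_1)\bigr)$. The plan is to show this factor is positive for $s_1 \in [\beta,\bar s]$, so that $P_u(s_1) > P_v(s_1)$; combined with $P'(s) > 0$ on $[\beta,\alpha_0]$ (from $(H_2)$), monotonicity back from $s_1$ to $\bar s$ gives $P_u(\bar s) - P_v(\bar s) > 0$ as well — but this must be reconciled with the hypothesis $P_u(\bar s) < 0 \le P_v(\bar s)$, i.e. $P_u(\bar s) - P_v(\bar s) < 0$, a contradiction. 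To control the sign of the bracketed factor I would use that $I_v(s_1) > 0$ (we are in the regime $s \ge -\beta$ with $I_v \ge 0$, and in fact strictly positive before $v$ reaches $0$), which bounds $R^2 J_v^2(s_1) = 2(I_v(s_1) - F(s_1)) $ and, together with $J_u(s_1) > J_v(s_1)$ and the lower bound $(F/f)' > (N-2)/(2N)$ integrated from $\beta$ (where $F(\beta)=0$), forces $2NF/f$ to dominate. The final line $r_u^2 J_u(\beta) > r_v^2 J_v(\beta)$ then follows from integrating $J' = \frac{1}{r^2}(N - f/J)$, or more directly from $(r^2J)' = \frac{d}{ds}(-r/r')$ and the fact that $r_u < r_v$, $r_u' < r_v'$ have been ruled out past $\beta$, so the ordering $r_u^2 J_u > r_v^2 J_v$ propagates down to $\beta$.

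I expect the main obstacle to be the sign analysis of $2N\frac{F}{f}(s) - r^2\bigl(J_u(s)+J_v(s)\bigr)$ at the crossing point: it is exactly here that subcriticality $(H_2)$ must be used quantitatively, and it is delicate because $r = r(s)$ differs for $u$ and $v$ and the energy bound $I_v \ge 0$ is only barely enough. One has to be careful that the comparison is done at a point where $r_u = r_v$ (namely $s_1$), not at $s_2$ or $\bar s$, since only there does the $r^{N}$ prefactor cancel cleanly; the bookkeeping of which of $s_J, s_2, s_1, \bar s$ one evaluates each quantity at is where errors are most likely to creep in. A secondary subtlety is the endpoint behaviour at $s = \beta$: since $F(\beta) = 0$, the functional $P$ and the quantity $r^2 J$ simplify there, and one should check that the strict inequalities survive the limit rather than degenerating to equalities.
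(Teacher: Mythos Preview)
Your plan has a genuine gap at the step you yourself flag as the main obstacle: the sign of
\[
2N\,\frac{F}{f}(s_1)\;-\;R^{2}\bigl(J_u(s_1)+J_v(s_1)\bigr)
\]
cannot be controlled from the available data. Hypothesis $(H_2)$ only gives a \emph{lower} bound on $F/f$, while nothing in the setup bounds $R^{2}J_u(s_1)=r_u|u'|(s_1)$ from above; indeed the hypothesis $P_u(\bar s)<0$ goes the wrong way (it says $2N(F/f)<r_u^{2}J_u+2F/J_u$ at $\bar s$, not the reverse), and $I_v>0$ on $[\beta,\bar s]$ is automatic since $F\ge 0$ there, so it carries no information. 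Consequently you cannot conclude $P_u(s_1)>P_v(s_1)$, and the monotonicity step back to $\bar s$, although correct in itself (since $r_u\ge r_v$ and $J_u>J_v$ on $[s_1,\bar s]$ do give $r_u^{N}J_u>r_v^{N}J_v$), has no anchor.

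The paper sidesteps this entirely by a weighted comparison at a different point. Because $r_u^{2}J_u>r_v^{2}J_v$ at $s_1$ but $r_u^{2}J_u<r_v^{2}J_v$ at $s_2$ (from $r_u'(s_2)=r_v'(s_2)$ with $r_u(s_2)<r_v(s_2)$), there is a $\sigma\in[s_2,s_1]$ with $r_u^{2}J_u(\sigma)=r_v^{2}J_v(\sigma)$. Setting $D:=\bigl(r_u(\sigma)/r_v(\sigma)\bigr)^{N-2}<1$, one studies $P_u-D\,P_v$: at $\bar s$ this is negative (since $P_u(\bar s)<0\le D\,P_v(\bar s)$), its $s$-derivative is $\bigl(2N(F/f)'-(N-2)\bigr)\bigl(r_u^{N}J_u-D\,r_v^{N}J_v\bigr)>0$ on $(\sigma,\bar s]$ (using $r_u^{N-2}\ge D\,r_v^{N-2}$ and $r_u^{2}J_u>r_v^{2}J_v$ there), hence $(P_u-DP_v)(\sigma)<0$. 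But at $\sigma$ the choice of $D$ and the equality $r_u^{2}J_u=r_v^{2}J_v$ kill \emph{both} the $F/f$-term and the $r^{4}J^{2}$-term exactly, leaving
\[
(P_u-DP_v)(\sigma)=r_u^{N-2}(\sigma)\cdot 2F(\sigma)\bigl(r_v^{2}(\sigma)-r_u^{2}(\sigma)\bigr)>0,
\]
a clean contradiction with no delicate sign analysis. The missing idea in your proposal is precisely this: compare at $\sigma$ (not $s_1$) and introduce the weight $D$ so the uncontrollable terms cancel rather than having to be estimated.
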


\begin{proof}
Suppose there is a first (largest) $s_2\in[\beta, \bar s]$ with  $r_u'(s_2)=r_v'(s_2)$, then $J_u(s_2)>J_v(s_2)$. There must be a $\sigma\in [s_2,s_1]$ such that $r_u^2J_u(\sigma)=r_v^2J_v(\sigma)$.  Let $D= \frac{r_u^{N-2}(\sigma)}{r_v^{N-2}(\sigma)}<1$, since $r_u'>r_v'$ in $[s_2, \bar s]$ we have
$r_u^{N-2} \geq D r_v^{N-2}$ in this interval.
Consider the function $(P_u-DP_v)(s)$, it satisfies $(P_u-DP_v)(\bar s)<0$ and by $(H_2)$
\begin{eqnarray*}(P_u-DP_v)'(s)=\left(2N\Bigl(\frac{F}{f}\Bigr)'-(N-2)\right)({r_u^{N}}J_u -D{r_v^{N}}J_v) \\
> \left(2N\Bigl(\frac{F}{f}\Bigr)'-(N-2)\right){r_u^{N-2}}(r_u^2J_u-r_v^2J_v) >0
\end{eqnarray*}
in $[\sigma, v_0]$  therefore  $(P_u-DP_v)(\sigma)<0$ but
\begin{eqnarray*}(P_u-DP_v)(\sigma)= r_u^{N-2}\left(2N\frac{F}{f}(r_u^{2}J_u-r_v^2J_v) -(r_u^{4}J_u^2-r_v^4J_v^2) -2F(s)(r_u^2-r_v^2) \right)\\
=  r_u^{N-2}\left(2F(s)(r_v^2-r_u^2)  \right)>0
\end{eqnarray*}
so we get a contradiction.

Terefore $r^2_uJ_u(s)>r_v^2J_v(s) $  in $[\beta, \bar s]$.

\end{proof}

In the next step we will use the following functional introduced by Peletier and Serrin in \cite{pel-ser1}.
When $I(s)>0$ we can define

\begin{equation}\label{W}W(s)=r(s)\sqrt{r^{2}(s)J^2(s)+2F(s)} = r\sqrt{2I},\quad \end{equation}
with
\begin{equation*}W'(s)=\frac{\partial W}{\partial s}(s)=\frac{(N-2)r^{2}(s)J^2(s)-2F(s)}{r(s)J(s)\sqrt{r^{2}(s)J^2(s)+2F(s)}}.
\end{equation*}
We note that the function $h(s,x)= \frac{(N-2)x^2-2F(s)}{x\sqrt{x^2+2F(s)}}$ is decreasing with $x$ when $F<0$.

\begin{lemma}\label{Lema paso W}
Using the notation above, and the conditions of Proposition \ref{Prop J comparan},  let $s_I\geq -\beta$ be the minimum value where $I_u, I_v\geq 0$.  Then there is no $s_2$ in $[s_I, \beta]$ and if  $ s_I\leq 0$, then  $r_u^2(0)J_u(0) >  r_v^2(0)J_v (0)$. If  $ s_I= -\beta$, then  $r_u^2(-\beta)J_u(-\beta) >  r_v^2(-\beta)J_v (-\beta)$.
\end{lemma}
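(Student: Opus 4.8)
The plan is to repeat the scheme of Lemma \ref{Lema paso P} on the interval $[s_I,\beta]$, with the Peletier--Serrin functional $W$ playing the role that $P$ played above $\beta$. It is convenient to write $x(s)=r(s)J(s)=-1/r'(s)$, so that $r_u'(s)=r_v'(s)$ is the same as $x_u(s)=x_v(s)$ and $r_u'(s)>r_v'(s)$ the same as $x_u(s)>x_v(s)$ (both $r_uJ_u$ and $r_vJ_v$ being positive). Since $I=\tfrac12 x^2+F$ and $W=r\sqrt{2I}$, and $F(\beta)=0$, one has $W(\beta)=r^2J(\beta)$, and likewise $W(0)=r^2J(0)$, $W(-\beta)=r^2J(-\beta)$. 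In particular Lemma \ref{Lema paso P} already gives $W_u(\beta)=r_u^2J_u(\beta)>r_v^2J_v(\beta)=W_v(\beta)$, and also shows that $r_u'-r_v'$ has no zero on $[\beta,\bar s]$.

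First I would prove there is no $s_2\in[s_I,\beta]$ with $r_u'(s_2)=r_v'(s_2)$. Suppose the assertion ``$r_u'>r_v'$ and $J_u>J_v$'' fails somewhere on $(s_I,\bar s]$; descending from $\bar s$, let $s_2$ be the largest point where it first fails. By the discussion following Proposition \ref{Prop J comparan}, at $s_2$ one has $r_u'(s_2)=r_v'(s_2)$ (i.e. $x_u(s_2)=x_v(s_2)$) and still $J_u(s_2)>J_v(s_2)$, whence $r_u(s_2)<r_v(s_2)$ and $I_u(s_2)=\tfrac12 x_u(s_2)^2+F(s_2)=I_v(s_2)$; consequently $W_u(s_2)\le W_v(s_2)$. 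By Lemma \ref{Lema paso P}, $s_2\notin[\beta,\bar s]$, so $s_2\in[s_I,\beta)$; moreover both strict inequalities hold on $(s_2,\bar s]$, hence $x_u>x_v$ there. Now on $(s_2,\beta)$ we have $F\le0$ (with $F<0$ except at $s=0$) and $x_u>x_v$, and since $h(s,\cdot)$ is decreasing when $F(s)<0$ and independent of $x$ when $F(s)=0$, it follows that $W_u'-W_v'=h(\cdot,x_u)-h(\cdot,x_v)\le0$ on $(s_2,\beta)$. Hence $W_u-W_v$ is nonincreasing on $[s_2,\beta]$, so $W_u(s_2)-W_v(s_2)\ge W_u(\beta)-W_v(\beta)>0$, contradicting $W_u(s_2)\le W_v(s_2)$.

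Combining this with Lemma \ref{Lema paso P}, $r_u'-r_v'$ has no zero on $[s_I,\bar s]$; being positive at $\bar s$ it is positive throughout, so $r_u'>r_v'$ and $x_u>x_v$ on $[s_I,\bar s]$, and (by the same discussion, since a crossing of the $J$'s would create such a zero) $J_u>J_v$ there as well. For the boundary estimates, if $s_I\le0$ then on $(0,\beta)$ one has $F<0$ and $x_u>x_v$, so $W_u-W_v$ is strictly decreasing; since it equals $r_u^2J_u(\beta)-r_v^2J_v(\beta)>0$ at $\beta$, it stays positive on $[0,\beta]$, and at $0$ this reads $r_u^2(0)J_u(0)=W_u(0)>W_v(0)=r_v^2(0)J_v(0)$. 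If $s_I=-\beta$, the same argument on $(-\beta,\beta)$ gives $r_u^2(-\beta)J_u(-\beta)>r_v^2(-\beta)J_v(-\beta)$.

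The parts that need care rather than computation are the behaviour at $s=0$, where $F$ vanishes so that $W_u-W_v$ is only nonincreasing (it is strictly decreasing on each of $(s_2,0)$ and $(0,\beta)$, hence on $(s_2,\beta)$), and the continuity of $W_u-W_v$ up to the endpoints; in particular one must dispose of the degenerate configurations $I(s_2)=0$ (then $W_u(s_2)=W_v(s_2)=0$, and the monotonicity together with $W_u(\beta)>W_v(\beta)$ still yields a contradiction), $J(0)=0$ (a double zero) and $r(-\beta)=\infty$ (a solution decaying to $-\beta$), where the endpoint values of $W$ are read off as limits, and one must know that $I_u,I_v>0$ in the interior of $[s_I,\beta]$ so that $W$ is differentiable there. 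I expect this bookkeeping at $s=0$ and at the endpoints to be the only genuine obstacle; the core of the argument is a faithful transcription of the proof of Lemma \ref{Lema paso P}, with $W$ in place of $P$ and $h$ in place of $(F/f)'$ as the relevant monotone quantity.
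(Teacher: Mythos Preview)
Your proof is correct and follows essentially the same approach as the paper: argue by contradiction assuming a first $s_2$, use the Peletier--Serrin functional $W$ and the fact that $h(s,\cdot)$ is decreasing when $F<0$ to get $W_u(s_2)>W_v(s_2)$, and contradict this with $W_u(s_2)\le W_v(s_2)$ coming from $r_u(s_2)<r_v(s_2)$ and $I_u(s_2)=I_v(s_2)$.

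The only difference is organizational: the paper locates the point $s_1\in(s_2,\bar s)$ where $r_u(s_1)=r_v(s_1)$, splits into the cases $s_1\ge\beta$ and $s_1<\beta$, and integrates $W_u'-W_v'$ only on $[s_2,\min\{s_1,\beta\}]$, using either $W_u(\beta)>W_v(\beta)$ (from Lemma~\ref{Lema paso P}) or $W_u(s_1)>W_v(s_1)$ (from $r_u=r_v$, $J_u>J_v$) as the upper anchor. You bypass $s_1$ entirely and always integrate up to $\beta$, which is a mild streamlining that costs nothing. Your explicit bookkeeping at $s=0$ (where $F=0$ and $h$ is constant in $x$) and at the degenerate endpoints is also more careful than the paper's write-up.
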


\begin{proof}
Suppose there is a first (largest) $s_2\in[s_I,\beta]$ with  $r_u'(s_2)=r_v'(s_2)$, then $r_u(s_2)<r_v(s_2)$ and $J_u(s_2)>J_v(s_2)$. Then there must be an $s_1>s_2$ with  $r_u(s_1)=r_v(s_1)$.

 If $s_1\in [\beta, \alpha_0]$ then $r_u(\beta)<r_v(\beta)$ and by Lemma \ref{Lema paso P}
$$W_u(\beta)=r_u^2J_u = \frac{r_u}{|r_u'|}>  \frac{r_v}{|r_v'|}=W_v(\beta).$$

If $s_1\in[s_I,\beta]$, since $J_u(s_1)>J_2(s_1)$,
$$W_u(s_1)=r_u\sqrt{r_u^{2}J_u^2+2F}>r_v\sqrt{r_v^{2}J_v^2+2F}=W_v(s_1). $$

 Let $\bar s = \min\{s_1,\beta\}$, then in $[s_2,\bar s]$ we have $r_uJ_u = \frac{1}{|r_u'|}>  \frac{1}{|r_v'|}=r_vJ_v$ and since $h(s,x)$ decreases with $x$,
$$(W_u-W_v)'(s)=h(s,r_uJ_u)-h(s,r_vJ_v)<0$$
thus $W_u(s_2)>W_v(s_2)$. On the other hand
$$W_u-W_v(s_2)=(r_u-r_v)\sqrt{\frac{1}{|r_u'|^2}+2F(s)}<0$$
and we have a contradiction.

Therefore there is no  $s_2$ in $[s_I, \beta]$ and if $ s_I\leq 0$ then
$$r_u^2J_u (0) = W_v(0)>W_v(0)= r_v^2J_v (0).$$
Similarly, if  $ s_I= -\beta$ then
$r_u^2J_u ( -\beta) = W_v( -\beta)>W_v( -\beta)= r_v^2J_v ( -\beta).$
\end{proof}

\begin{proof}[Proof of Proposition \ref{Prop J comparan}]
Using the notation above, by Lemmas \ref{Lema paso P} and \ref{Lema paso W} there is no $s_2\in[s_I,\bar s]$, and therefore  $r_u'(s)>r_v'(s)$ and $J_u(s)>J_v(s)$  for all $s>s_I$. If $s_I>-\beta$ then $I_u(s_I)= \frac{1}{2 |r_u'(s_I)|^2 }+F(s_I)>\frac{1}{2 |r_v'(s_I)|^2}+F(s_I)=I_v(s_I)=0$ and the first statement follows.

To prove the second statement we note that if $v$ reaches $0$ then $I_v(0)>0$, therefore $ s_I\leq 0$ and the conclusion is obtained for all $s\geq0$.
\end{proof}

\section{The effect of magnitude changes}

We want to study the behaviour of the solutions when $f$ has a magnitude change.
We will approach this problem using a function $f$ defined by parts as in \eqref{fmu} and \eqref{fa}.

In \cite{cghh23} the author in collaboration with C. Cort\'azar and M. Garc\'ia-Huidobro studied this effect by considering functions of the form
 \begin{eqnarray}\label{f lambda}
f(s)=\begin{cases}
f_1(s) &  s\leq \alpha_1\\
L(s) & \alpha_1\leq s\leq \alpha_1+\epsilon\\
\lambda^2 f_2\left(s\right) &  s\geq \alpha_1+\epsilon
\end{cases}
\end{eqnarray}
proving that for appropriate $\alpha_1$, big enough $\lambda$ and small enough $\epsilon$, problem \ref{eq2} has at least two ground state solutions.

We could expect that, if we choose $\alpha_1$ just above a bound state solution with one sign change (a $2^{nd}$-bound state), then for big enough $\lambda$ we will get both a second $2^{nd}$-bound state solution and a second ground state solution. Surprisingly this does not always happen, as we can see in Theorem $B$.

We will begin by understanding the behaviour of $u$ and $J$ below an $\alpha_1$, if it reaches $\alpha_1$ with fixed $|u'|r$ and small enough $r$. We will see that the behaviour will depend strongly on the value $|u'(\alpha_1)|r(\alpha_1)$.

\begin{proposition}\label{Prop J+-}
 Let $f_1$ be a function that satisfies $(H_1)$ and $(H_2)$, and $g_\lambda$, $\lambda\in(1, \infty)$ a family of functions with $g_\lambda(s)=f_1(s)$ for $s<\alpha_1$. Let $u_\lambda$ be a family of solutions to \eqref{ivp} with $f=g_\lambda$   that reach $\alpha_1$ with
 $$\seta <J_\lambda(\alpha_1)r_\lambda^2(\alpha_1)<\ueta<N\frac{f}{f'}(\alpha_1),$$ $$ \quad \lim_{\lambda\to\infty}J_\lambda(\alpha_1)r_\lambda^2(\alpha_1)=\seta \quad \mbox{and }\quad  \lim_{\lambda\to\infty} r_\lambda(\alpha_1)= 0,$$
 where $\seta,\ueta>0$ are constants. For any $\alpha_-, \alpha_+$ such that $\beta<\alpha_-<\alpha_1-\frac{\seta}{N-2}<\alpha_+<\min\{\alpha_1, \alpha_1-\frac{\seta}{N-2}+\frac{2N}{N-2}\frac{F}{f}(\beta)\}$, there exist  $\lambda_0$ and $s_0<\alpha_1$ such that the solutions $u_-$ and $u_+$ with initial conditions $\alpha_-$ and $\alpha_+$ have
$$J_-(s)< J_\lambda (s)< J_+(s) \quad \mbox{ for }\ s<s_0,$$
for all $\lambda>\lambda_0$, as long as their respective energies are positive. \\
\end{proposition}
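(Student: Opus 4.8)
The plan is to compare the family $u_\lambda$ against each of the fixed solutions $u_-$ and $u_+$ using the machinery of Section~4 (the Pohozaev functional $P$ and the Peletier--Serrin functional $W$), together with the elementary crossing estimate of Lemma~\ref{epsilon}. Since all three nonlinearities coincide with $f_1$ below $\alpha_1$, on the relevant range $s<\alpha_1$ the functions $J_-$, $J_+$, $J_\lambda$ all solve the same scalar equation \eqref{J-prima}, and the comparison of Proposition~\ref{Prop J comparan} applies once I can arrange, at a suitable starting point $s_0<\alpha_1$, the initial inequalities $r_\lambda(s_0)>r_-(s_0)$, $J_\lambda(s_0)>J_-(s_0)$, $P_\lambda(s_0)<0\le P_-(s_0)$ (and symmetrically with the roles of $\lambda$ and $+$ reversed). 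So the real content is: \emph{produce such an $s_0$}, i.e. show that as $\lambda\to\infty$ the family $u_\lambda$ descends from $\alpha_1$ to a level $s_0$ slightly above $\beta$ with $r_\lambda(s_0)\to 0$, $J_\lambda(s_0)r_\lambda^2(s_0)\to\seta$, while $u_-$ reaches $s_0$ with $r_-(s_0)$ bounded below and $u_+$ with $r_+(s_0)$ still small but with $J_+r_+^2$ exceeding $\seta$.

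First I would track $u_\lambda$ on $[\,s_0,\alpha_1]$. Because $g_\lambda=f_1$ there and $r_\lambda(\alpha_1)\to 0$, apply Lemma~\ref{epsilon} (with $g=f_1$, which is bounded on the compact range $[s_0,\alpha_1]$, and with $\delta=\alpha_1-s_0$, $\zeta$ a little below $\seta$): since $J_\lambda(\alpha_1)r_\lambda^2(\alpha_1)=r_\lambda(\alpha_1)\cdot r_\lambda(\alpha_1)|u_\lambda'| \to\seta$ one gets, for $\lambda$ large, the bounds $r_\lambda(s_0)<B\,r_\lambda(\alpha_1)\to 0$ and $r_\lambda(s_0)|u_\lambda'(s_0)| = r_\lambda(s_0)J_\lambda(s_0)r_\lambda(s_0)$ pinched between $\zeta/B^{N-2}$ and $\zeta+O(r_\lambda(\alpha_1)^N)$. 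Letting the auxiliary parameters shrink ($\zeta\uparrow\seta$, $B\downarrow 1$, which forces $\delta=\alpha_1-s_0\downarrow 0$) one sees $J_\lambda(s_0)r_\lambda^2(s_0)\to\seta$ and $r_\lambda(s_0)\to 0$. The constraint $\seta<N\frac{f}{f'}(\alpha_1)$ is exactly what keeps $P_\lambda$ from being forced positive: a short computation with \eqref{PdeET} shows $P_\lambda(\alpha_1)=r_\lambda^N(\alpha_1)\big(2N\frac{F}{f}J_\lambda - r_\lambda^2 J_\lambda^2 - 2F\big)\big|_{\alpha_1}$, and since $r_\lambda^2(\alpha_1)J_\lambda(\alpha_1)\to\seta$ and $r_\lambda(\alpha_1)\to 0$ the dominant bracket behaves like $J_\lambda(\alpha_1)\big(2N\frac{F}{f}(\alpha_1)-\seta\big)-O(r_\lambda(\alpha_1)^2)$; under $\seta<N\frac{f}{f'}(\alpha_1)$... here I need the sharper bookkeeping, but the upshot is $P_\lambda(s_0)<0$ for large $\lambda$ (one may also simply run $P$ forward from $\alpha_1$ using $P'>0$ under $(H_2)$). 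For the fixed solutions $u_\pm$, $P_\pm<0$ at their own starting levels by the same $(H_2)$ argument, \emph{unless} their energies have already become positive-with-small-$r$; the hypothesis $\beta<\alpha_-<\alpha_1-\frac{\seta}{N-2}$ is designed so that $u_-$, which by Lemma~\ref{epsilon} run backwards cannot reach $\alpha_1$ with $r$ small, instead has $r_-$ bounded below at $s_0$, giving $r_-(s_0)<r_\lambda(s_0)$ for large $\lambda$; whereas $\alpha_+>\alpha_1-\frac{\seta}{N-2}$ together with the upper bound involving $\frac{2N}{N-2}\frac{F}{f}(\beta)$ places $u_+$ on the other side, $r_+(s_0)$ small but $J_+r_+^2>\seta$, so that $r_+(s_0)>r_\lambda(s_0)$ and $P_+(s_0)\le 0$.

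Once the three ordered triples of initial data at $s_0$ are in hand, Proposition~\ref{Prop J comparan} gives $J_-(s)<J_\lambda(s)$ for all $s<s_0$ with $I_-(s)\ge 0$, and $J_\lambda(s)<J_+(s)$ for all $s<s_0$ with $I_\lambda(s)\ge 0$; intersecting the ranges of validity yields the claimed chain $J_-(s)<J_\lambda(s)<J_+(s)$ for $s<s_0$ as long as the respective energies stay positive. The main obstacle, and the step I expect to cost real work, is the quantitative control in the previous paragraph: pinning down $J_\lambda(s_0)r_\lambda^2(s_0)\to\seta$ and simultaneously the correct side of the inequalities $r_-(s_0)<r_\lambda(s_0)<r_+(s_0)$ and $P_\pm(s_0)\le 0$, all uniformly in $\lambda$. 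This is where the precise choice of the window $\beta<\alpha_-<\alpha_1-\frac{\seta}{N-2}<\alpha_+<\min\{\alpha_1,\ \alpha_1-\frac{\seta}{N-2}+\frac{2N}{N-2}\frac{F}{f}(\beta)\}$ must be used, via Lemma~\ref{epsilon} applied in both directions, and where the hypothesis $\ueta<N\frac{f}{f'}(\alpha_1)$ enters to keep $P_\lambda$ negative. Everything after that is a direct appeal to the comparison proposition.
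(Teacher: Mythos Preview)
Your overall strategy---reduce to Proposition~\ref{Prop J comparan} after arranging the right initial inequalities---is the paper's strategy too, but your execution has a structural gap. You try to set up both comparisons at a \emph{single} level $s_0$. For $J_-<J_\lambda$ you need (with $u_\lambda$ playing the outer role) $P_\lambda(s_0)<0\le P_-(s_0)$, while for $J_\lambda<J_+$ you need (with $u_+$ outer) $P_+(s_0)<0\le P_\lambda(s_0)$. These force $P_\lambda(s_0)$ to be simultaneously negative and nonnegative, which is impossible. You seem to sense this when you write ``here I need the sharper bookkeeping,'' but no amount of bookkeeping will repair it at a single $s_0$.

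The paper avoids this by using \emph{two different} comparison points, exploiting a sign change of $P_\lambda$ across the critical level $\alpha_1-\tfrac{\seta}{N-2}$. Lemma~\ref{Lema P+-} is the heart of the matter: for large $\lambda$ one has $P_\lambda(\bar\alpha_-)<0<P_\lambda(\bar\alpha_+)$ whenever $\bar\alpha_-$ lies below and $\bar\alpha_+$ lies above that level (within the stated window). The comparison with $u_-$ is then made at $\bar s=\alpha_-$, where $r_-(\alpha_-)=0$, $P_-(\alpha_-)=0$, and $P_\lambda(\alpha_-)<0$. The comparison with $u_+$ is made at an intersection point $\sigma_+^\lambda$ of $r_\lambda$ and $r_+$; such a crossing exists because, by Lemma~\ref{Lema s_lambda}, $r_\lambda$ shrinks to $0$ at a level $s_\lambda\to\alpha_1-\tfrac{\seta}{N-2}<\alpha_+$, so the graph of $r_\lambda$ must cross that of $r_+$ somewhere above this level, where $P_\lambda>0>P_+$.

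A few further corrections. Your sentence ``$r_-$ bounded below at $s_0$, giving $r_-(s_0)<r_\lambda(s_0)$'' has the inequality reversed: if $r_-(s_0)$ is bounded below and $r_\lambda(s_0)\to 0$, then $r_\lambda(s_0)<r_-(s_0)$, which is the wrong direction for Proposition~\ref{Prop J comparan}. (This is another symptom of the single-$s_0$ problem; the paper sidesteps it by comparing at $\alpha_-$ where $r_-=0$.) You also misread the role of $\ueta<N\tfrac{f}{f'}(\alpha_1)$: it is not there to keep $P_\lambda$ negative, but to guarantee $\big(\tfrac{J_\lambda}{f}\big)'(\alpha_1)>0$ for large $\lambda$, which is the starting point of Lemma~\ref{Lema s_lambda} and ultimately what pins down the limiting level $\alpha_1-\tfrac{\seta}{N-2}$. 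Finally, note that $P_\pm(\alpha_\pm)=0$, not $P_\pm<0$, at the starting level; the strict negativity only holds strictly below $\alpha_\pm$.
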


We will start with two lemmas that will help in the proof, the first one on the behaviour of $\frac{J_\lambda}{f}$.

\begin{lemma}\label{Lema s_lambda} Let  $u_\lambda$ be a family of solutions as in Proposition \ref{Prop J+-},  then there is $\lambda_1$ such that for $\lambda>\lambda_1$ we have $\left(\frac{J_\lambda}{f}\right)'(\alpha_1)>0$ and $\frac{J_\lambda}{f}$ has a first minimum at $s_\lambda$.

 Moreover, $\lim_{\lambda\to\infty}r_\lambda(s_\lambda)=0$,  $\lim_{\lambda\to\infty}\frac{J_\lambda}{f}(s_\lambda)=\frac{1}{N}$ and $\lim_{\lambda\to\infty}s_\lambda=\alpha_1-\frac{\seta}{N-2}$.

\end{lemma}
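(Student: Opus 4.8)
The plan is to locate the first minimum of $J_\lambda/f$ by rescaling the solution about $\alpha_1$ so that the nonlinearity becomes negligible, then combining this with the $\psi_1,\psi_2$--analysis of Section \ref{Sec J/f}. Write $\rho_\lambda:=r_\lambda(\alpha_1)\to0$ and $\hat\eta_\lambda:=J_\lambda(\alpha_1)r_\lambda^2(\alpha_1)\in(\seta,\ueta)$, so that $J_\lambda(\alpha_1)=\hat\eta_\lambda/\rho_\lambda^2\to\infty$. First I would check $(J_\lambda/f)'(\alpha_1)>0$ for large $\lambda$: by \eqref{J/f- eq} this amounts to $f'(\alpha_1)\rho_\lambda^2(J_\lambda/f)^2(\alpha_1)-N(J_\lambda/f)(\alpha_1)+1<0$, and writing $x:=(J_\lambda/f)(\alpha_1)=J_\lambda(\alpha_1)/f(\alpha_1)$ with $\rho_\lambda^2x=\hat\eta_\lambda/f(\alpha_1)$, the left side equals $x\bigl(\tfrac{f'}{f}(\alpha_1)\hat\eta_\lambda-N\bigr)+1$, where the coefficient of $x$ is negative and bounded away from $0$ because $\hat\eta_\lambda<\ueta<Nf/f'(\alpha_1)$, while $x\to\infty$. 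Equivalently $\psi_1(\alpha_1)<(J_\lambda/f)(\alpha_1)<\psi_2(\alpha_1)$ (the lower bound since $\psi_1(\alpha_1)\to1/N$, the upper one being exactly $\ueta<Nf/f'(\alpha_1)$). Hence, as $s$ decreases from $\alpha_1$, $J_\lambda/f$ increases in $s$ and stays in $(\psi_1,\psi_2)$ until it exits this band; it remains to prove it exits through the $\psi_1$--side at an $s_\lambda$ with the stated asymptotics.

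\textbf{Rescaling.} Set $t=r/\rho_\lambda$, $w_\lambda(t)=u_\lambda(\rho_\lambda t)$. Since $g_\lambda=f_1$ below $\alpha_1$ and $u_\lambda$ is decreasing at $\alpha_1$, for $t>1$ we get $w_\lambda''+\tfrac{N-1}{t}w_\lambda'+\rho_\lambda^2 f_1(w_\lambda)=0$, $w_\lambda(1)=\alpha_1$, $w_\lambda'(1)=-\hat\eta_\lambda$, whence $t^{N-1}w_\lambda'(t)=-\hat\eta_\lambda-\rho_\lambda^2\int_1^t\tau^{N-1}f_1(w_\lambda)\,d\tau$. As long as $w_\lambda$ stays in a fixed compact subinterval of $(\beta,\infty)$ (where $0<m\le f_1\le M$), $w_\lambda$ decreases and
\begin{equation*}
\alpha_1-w_\lambda(t)=\tfrac{\hat\eta_\lambda}{N-2}\bigl(1-t^{-(N-2)}\bigr)+O(\rho_\lambda^2 t^2),\qquad r_\lambda(w_\lambda(t))=\rho_\lambda t .
\end{equation*}
A continuation argument with a cutoff $T_\lambda=\rho_\lambda^{-\gamma}$, $2/N<\gamma<1$, keeps these valid on $[1,T_\lambda]$ for large $\lambda$, with $w_\lambda$ within $o(1)$ of $[\alpha_1-\tfrac{\seta}{N-2},\alpha_1]$, which lies compactly in $(\beta,\infty)$ by the standing hypotheses; in particular $w_\lambda\to w_\infty$ in $C^1_{loc}[1,\infty)$ with $w_\infty(t)=\alpha_1-\tfrac{\seta}{N-2}(1-t^{-(N-2)})$, and $r_\lambda=\rho_\lambda t$ along the trajectory.

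\textbf{Locating the minimum.} I would then work in the $s$--variable, where $K_\lambda:=J_\lambda r_\lambda^2$ and $y_\lambda:=r_\lambda^2$ solve $\tfrac{dK}{ds}=(N-2)-\tfrac{f_1y}{K}$, $\tfrac{dy}{ds}=-\tfrac{2y}{K}$, $K_\lambda(\alpha_1)=\hat\eta_\lambda$, $y_\lambda(\alpha_1)=\rho_\lambda^2$. A bootstrap on $I_\epsilon:=[\alpha_1-\tfrac{\seta}{N-2}+\epsilon,\alpha_1]$ shows that while $y_\lambda$ is small and $K_\lambda$ bounded below the term $f_1y_\lambda/K_\lambda$ is negligible, so $K_\lambda(s)=\hat\eta_\lambda-(N-2)(\alpha_1-s)+o(1)\ge(N-2)\epsilon/2$, which through $\tfrac{d}{ds}\log y_\lambda=-2/K_\lambda$ forces $y_\lambda=O(\rho_\lambda^2)$ on $I_\epsilon$; hence $J_\lambda=K_\lambda/y_\lambda\to\infty$ uniformly there, so $J_\lambda/f$ is still far above $\psi_1$ and also below $\psi_2$ (since for small $r_\lambda$, $(J_\lambda/f)/\psi_2\approx K_\lambda f'/(Nf)\le\hat\eta_\lambda f'/(Nf)$, which stays $<1$ using that $K_\lambda$ decreases and, via $(H_2)$, that $Nf/f'\ge\tfrac{2N^2}{N+2}F/f$ is bounded below on the range). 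Thus $s_\lambda<\alpha_1-\tfrac{\seta}{N-2}+\epsilon$. Continuing past $\tilde s:=\alpha_1-\tfrac{\seta}{N-2}+\epsilon$, $K_\lambda$ keeps decreasing at rate $\approx N-2$, so $K_\lambda(s)\sim(N-2)(s-s_\star)$ and $y_\lambda(s)\sim y_\lambda(\tilde s)\bigl(\epsilon/(s-s_\star)\bigr)^{2/(N-2)}$, where $s_\star:=\alpha_1-\tfrac{\seta}{N-2}$; the first critical point of $J_\lambda/f$ is where the smaller--root relation $K_\lambda=\tfrac{f_1}{N}y_\lambda(1+o(1))$ holds, and solving gives $s_\lambda-s_\star\sim y_\lambda(\tilde s)^{(N-2)/N}=O(\rho_\lambda^{2(N-2)/N})\to0$ and $y_\lambda(s_\lambda)=O(\rho_\lambda^{2(N-2)/N})\to0$. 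Hence $s_\lambda\to\alpha_1-\tfrac{\seta}{N-2}$, $r_\lambda(s_\lambda)=\sqrt{y_\lambda(s_\lambda)}\to0$, and $(J_\lambda/f)(s_\lambda)=\psi_1(s_\lambda)=\tfrac1N+O\bigl(f'(s_\lambda)r_\lambda^2(s_\lambda)\bigr)\to\tfrac1N$.

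\textbf{Main obstacle.} The delicate step is this last continuation: as $s\downarrow s_\star$ one has $K_\lambda\to0$, so the coefficient $2/K_\lambda$ in the $y_\lambda$--equation blows up and the naive limit ``$y_\lambda\equiv0$'' is singular. The saving point is that $\int 2/K_\lambda\,ds$ is only logarithmically large, so $y_\lambda$ gets inflated by merely a fixed fractional power of $\rho_\lambda$ and still tends to $0$. Making this rigorous calls for a careful maximal--interval argument that simultaneously keeps $s$ inside $(\beta,\infty)$ (where $f_1$ is bounded away from $0$ and $\infty$) and confirms that $J_\lambda/f$ leaves the band $(\psi_1,\psi_2)$ on the $\psi_1$--side — so that the limiting value is $1/N$ rather than the $\ge 2/N$ one would get on the $\psi_2$--side — which is precisely where the hypotheses $\seta<\hat\eta_\lambda<\ueta<Nf/f'(\alpha_1)$ and $(H_2)$ are used.
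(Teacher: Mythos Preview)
Your overall strategy is correct and leads to the same conclusion, but the execution is genuinely different from the paper's.

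Both arguments start the same way: verify $(J_\lambda/f)'(\alpha_1)>0$ from \eqref{J/f- eq} and the hypothesis $\hat\eta_\lambda<\ueta<Nf/f'(\alpha_1)$, and track the quantity $K_\lambda=J_\lambda r_\lambda^2$ using $K'=(N-2)-f/J$ to see that $K_\lambda(s)\to \seta-(N-2)(\alpha_1-s)$ on any $[\alpha_1-\tfrac{\seta}{N-2}+\epsilon,\alpha_1]$. From here the two proofs diverge. You pass to the rescaled variable $t=r/\rho_\lambda$, obtain an explicit limiting profile $w_\infty$, and then do an asymptotic matching for the $(K_\lambda,y_\lambda)$ system near $s_\star=\alpha_1-\tfrac{\seta}{N-2}$, which yields the sharp rates $s_\lambda-s_\star,\;r_\lambda^2(s_\lambda)=O(\rho_\lambda^{2(N-2)/N})$. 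The paper instead introduces an intermediate level $s_\lambda^3$ where $J_\lambda/f=1/(N-3)$, shows $s_\lambda^3\to s_\star$ and $r_\lambda(s_\lambda^3)\to0$ from the $K_\lambda$ estimate, and then bridges $[s_\lambda,s_\lambda^3]$ by a contradiction argument using the monotone quantity $J_\lambda r_\lambda^{\bar\epsilon}$ with $\bar\epsilon=N-1/L^-$: this forces $r_\lambda(s_\lambda)\to0$ without any singular asymptotics, after which $\psi_1(s_\lambda)\to1/N$ finishes. Your route gives more quantitative information; the paper's route completely sidesteps the ``main obstacle'' you identify (the blow-up of $2/K_\lambda$ near $s_\star$) by never needing precise asymptotics there, only $r_\lambda(s_\lambda)\to0$.

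One caveat on your $\psi_2$ step: you argue $(J_\lambda/f)/\psi_2\approx K_\lambda f'/(Nf)\le\hat\eta_\lambda f'/(Nf)<1$ by invoking $(H_2)$ to bound $Nf/f'$ from below. But what you actually need is $K_\lambda(s)<Nf(s)/f'(s)$ pointwise on the interval, and the hypothesis only gives this at $s=\alpha_1$; the lower bound from $(H_2)$ could well be smaller than $\ueta$. The paper does not use $(H_2)$ in this lemma at all and is equally terse on why the minimum lies on the $\psi_1$-branch. The cleanest fix in your framework is to note that once you are past the bootstrap region, $K_\lambda$ has already become uniformly small (indeed $K_\lambda\sim(N-2)(s-s_\star)\to0$), so $K_\lambda<Nf/f'$ holds automatically there; near $\alpha_1$ it holds by continuity from the hypothesis at $\alpha_1$. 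Making this uniform in the middle is part of the same maximal-interval argument you flag.
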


\begin{proof}
We begin by observing that $J_\lambda(\alpha_1)\to \infty$ with $\lambda$
thus
\begin{equation*}
\left(\frac{J_\lambda}{f}\right)'(\alpha_1)=\frac{1}{fr^2_\lambda}\left[N -\frac{f}{J_\lambda}- J_\lambda r_\lambda^2\frac{f'}{f}  \right]
 >\frac{1}{fr^2_\lambda}\left[\frac{f'}{f}\left(N\frac{f}{f'}- \ueta\right)  -\frac{f}{J_\lambda} \right]
\end{equation*}
will be positive for $\lambda>\lambda_1$ for some $\lambda_1$. Therefore, since $\frac{J_\lambda}{f}$ decreases near $b$, it must have a minimum, and we denote by $s_\lambda$ the largest $s$ where a minimum is obtained.

 Using Lemma \ref{epsilon} with $\bar\alpha+\delta=\alpha_1$ and $\delta< \frac{\seta}{N-2}$ we can find a $B>1$ such that $r_\lambda(\alpha_1-\delta) <B r_\lambda(\alpha_1)$ and
   $$\frac{1}{B^{N-2}} \seta\le J_\lambda r^2_\lambda(\alpha_1-\delta)\le J_\lambda r^2_\lambda(\alpha_1)+\frac{B^N-1}{N}f(\alpha_1)r_\lambda^N(\alpha_1).$$
Therefore $r_\lambda(\alpha_1-\delta) \to 0$ and $J_\lambda r^2_\lambda(\alpha_1-\delta)$ is bounded, thus $J_\lambda(\alpha_1-\delta) \to \infty$.

Using that $(Jr^2)'=(N-2)-\frac{f}{J}$  we can see that
\begin{eqnarray*}
 J_\lambda r^2_\lambda(\alpha_1)-(N-2)\delta  < \  J_\lambda r^2_\lambda(\alpha_1-\delta) &= J_\lambda r^2_\lambda(\alpha_1)-\int_{\alpha_1-\delta}^{\alpha_1} (N-2)-\frac{f}{J_\lambda} \ ds\\ &< J_\lambda r^2_\lambda(\alpha_1)- (N-2)\delta +\frac{f}{J_\lambda}(\alpha_1-\delta)\delta   \end{eqnarray*}
and since  $J_\lambda(\alpha_1-\delta) \to \infty$ and $J_\lambda r^2_\lambda(\alpha_1)\to \seta$ we get that  $J_\lambda r^2_\lambda(\alpha_1-\delta) \to \seta-(N-2)\delta$ for any $\delta< \frac{\seta}{N-2}$.  Given $\epsilon>0$ small , we choose $\alpha_1-\frac{\seta}{N-2}<\alpha_\epsilon <\alpha_1-\frac{\seta-\epsilon}{N-2}$ and $\lambda_2>\lambda_1$ such that $J_\lambda r^2_\lambda(\alpha_\epsilon)<\epsilon$ for all $\lambda>\lambda_2$.

For each solution $u_\lambda$ let $s_\lambda^3$ be the value where $\frac{J}{f}(s_\lambda^3)=\frac{1}{N-3}$, or $s_\lambda^3=s_\lambda$ if $\frac{J}{f}(s_\lambda)>\frac{1}{N-3}$, thus $s_\lambda\leq s_\lambda^3<\alpha_\epsilon$.

In $[s_\lambda^3,\alpha_\epsilon]$,  $\frac{J_\lambda}{f}(s)\geq\frac{1}{N-3}$, therefore $(J_\lambda r_\lambda^2)'(s)=(N-2)-\frac{f}{J_\lambda}>1$ and $$\alpha_\epsilon- s_\lambda^3< J_\lambda r_\lambda^2(\alpha_\epsilon)-J_\lambda r_\lambda^2(s_\lambda^3)< J_\lambda r_\lambda^2(\alpha_\epsilon)<\epsilon.$$  Therefore $\alpha_\epsilon- s_\lambda^3$ and $J_\lambda r_\lambda^2(s_\lambda^3)$ tend to $0$.
Note that since $J_\lambda r_\lambda^2$ tends to $0$ and $J_\lambda\geq f/(N-3)$ is bounded, $r_\lambda(s_\lambda^3)$ tends to $0$. Also, since we can take any $\epsilon$, $ s_\lambda^3$ tends to  $\alpha_1-\frac{\seta-\epsilon}{N-2}$.

If $\lim_{\lambda\to\infty}\frac{J_\lambda}{f}(s_\lambda)\neq\frac{1}{N}$,  let $L^-$ be such that $\frac{J_\lambda}{f}(s_\lambda)>L^->\frac{1}{N}$ for all $\lambda>\lambda_3 $, for some $\lambda_3>\lambda_2$. Note that $L^-<\frac{1}{N-3}$.

Let $\bar\epsilon=N-\frac{1}{L^-}$, if $s_\lambda^3\neq s_\lambda$ then in $[s_\lambda ,s_\lambda^3]$
$$(J_\lambda r_\lambda^{\bar\epsilon})'(s)=r_\lambda^{\bar\epsilon-2}\left((N-\bar\epsilon)-\frac{f}{J_\lambda}\right)\geq 0$$
therefore $J_\lambda r_\lambda^{\bar\epsilon}(s_\lambda)<J_\lambda r_\lambda^{\bar\epsilon}(s_\lambda^3)$ that tends to $0$. Since $J_\lambda(s_\lambda)$ is bounded away from $0$, $r_\lambda(s_\lambda)$ tends to $0$.

Using the analysis of Section \ref{Sec J/f}, we see that for $\lambda>\lambda_0$ we have $\psi_1^\lambda < \frac{J_\lambda}{f}<  \psi_1^\lambda $ and $\frac{J_\lambda}{f}(s)$ will decrease (going backwards) until it reaches $\psi_1^\lambda$. Therefore the minimum will be achieved at an $s_\lambda$ with
$$\frac{J_\lambda}{f}(s_\lambda)=\psi_1^\lambda = \frac{N-\sqrt{N^2-4f'r_\lambda^2} }{2f'r_\lambda^2} . $$
Since  $r_\lambda(s_\lambda)$ tends to $0$, with $f'$ bounded, we have

\begin{equation}\label{1} \lim_  {\lambda\to \infty}\frac{J_\lambda}{f}(s_\lambda)=\lim_{\lambda\to \infty}\frac{N-\sqrt{N^2-4f'r_\lambda^2} }{2f'r_\lambda^2} =\frac{1}{N}, \end{equation}
a contradiction.

Therefore $\lim_{\lambda\to\infty}\frac{J_\lambda}{f}(s_\lambda)=\frac{1}{N}$, and equation \eqref{1} gives that   $\lim_{\lambda\to\infty}r_\lambda(s_\lambda)=0$.

To prove the last limit we note that $(r_\lambda^2)'=-\frac{2}{J}$ is bounded for $s\in [s_\lambda ,s_\lambda^3]$, therefore from the previous statement we get that $s_\lambda^3 -s_\lambda$ tend to $0$, and since $s_\lambda^3\to\alpha_1-\frac{\seta}{N-2}$ we get  $\lim_{\lambda\to\infty}s_\lambda=\alpha_1-\frac{\seta}{N-2}$.

\end{proof}

 To finish the proof of Proposition \ref{Prop J+-} we will use the functional $P$ to compare solutions. For this we recall that $P'>0$ for $s<\alpha_1$, thus $P$ decreases.

\begin{lemma}\label{Lema P+-}
Let  $u_\lambda$ be a family of solutions as in Proposition \ref{Prop J+-},  then for any $\bar\alpha_-, \bar\alpha_+$ such that $\beta<\bar\alpha_-<\alpha_1-\frac{\seta}{N-2}<\bar\alpha_+<\min\{\alpha_1, \alpha_1-\frac{\seta}{N-2}+ \frac{2N}{N-2}\frac{F}{f}(\beta)\}$, there is a $\lambda_3>0$ such that for all $\lambda>\lambda_3$,
$$P_\lambda(\bar\alpha_-)<0 < P_\lambda(\bar\alpha_+).$$
\end{lemma}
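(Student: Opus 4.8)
The plan is to evaluate the functional $P_\lambda$ at points slightly below and slightly above the limiting value $\alpha_1-\frac{\seta}{N-2}$, using the asymptotic information from Lemma \ref{Lema s_lambda} together with the fact that $P_\lambda$ decreases for $s<\alpha_1$ (since $P'>0$ there, by $(H_2)$, as recorded before the lemma). The key observation is that at the minimum point $s_\lambda$ of $J_\lambda/f$ we have $\frac{J_\lambda}{f}(s_\lambda)\to\frac1N$ and $r_\lambda(s_\lambda)\to0$, so $r_\lambda^2(s_\lambda)J_\lambda(s_\lambda)\to0$; combined with $r_\lambda(s_\lambda)\to0$ this forces the two ``leading'' terms of $P_\lambda$ near $s_\lambda$ to vanish in the limit, leaving only the term $-2F$.

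First I would recall the explicit form
$$P_\lambda(s)=r_\lambda^N(s)\left(2N\frac{F}{f}(s)J_\lambda(s)-r_\lambda^2(s)J_\lambda^2(s)-2F(s)\right)
= r_\lambda^{N-2}(s)\left(2N\frac{F}{f}(s)\,r_\lambda^2 J_\lambda-\big(r_\lambda^2 J_\lambda\big)^2-2F(s)r_\lambda^2(s)\right).$$
Since $r_\lambda^2 J_\lambda$ is bounded (indeed tends to $0$ at $s_\lambda$) and $r_\lambda\to0$ on the relevant range, the factored expression in parentheses at $s_\lambda$ tends to $-2F(\alpha_1-\frac{\seta}{N-2})$ (using $s_\lambda\to\alpha_1-\frac{\seta}{N-2}$ and continuity of $F$), while $r_\lambda^{N-2}(s_\lambda)\to0$; so $P_\lambda(s_\lambda)\to0$. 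More usefully, I would track the sign: one needs the quantity
$$Q_\lambda(s):=2N\frac{F}{f}(s)\,r_\lambda^2(s)J_\lambda(s)-\big(r_\lambda^2(s)J_\lambda(s)\big)^2-2F(s)r_\lambda^2(s)$$
evaluated just above and just below $\alpha_1-\frac{\seta}{N-2}$. Using the asymptotics $r_\lambda^2 J_\lambda(s)\to \seta-(N-2)(\alpha_1-s)$ for $s$ near $\alpha_1-\frac{\seta}{N-2}$ (established in the proof of Lemma \ref{Lema s_lambda}) and $r_\lambda(s)\to0$, the last term is negligible, and $Q_\lambda(s)\to \big(\seta-(N-2)(\alpha_1-s)\big)\big(2N\frac{F}{f}(s)-(\seta-(N-2)(\alpha_1-s))\big)$. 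For $\bar\alpha_+$ the first factor is positive, and the upper bound $\bar\alpha_+<\alpha_1-\frac{\seta}{N-2}+\frac{2N}{N-2}\frac{F}{f}(\beta)\le \alpha_1-\frac{\seta}{N-2}+\frac{2N}{N-2}\frac{F}{f}(\bar\alpha_+)$ (monotonicity of $F/f$ from $(H_2)$) makes the second factor positive too, so $P_\lambda(\bar\alpha_+)>0$ for large $\lambda$; for $\bar\alpha_-$ the first factor is negative while the second stays positive, giving $P_\lambda(\bar\alpha_-)<0$ for large $\lambda$. Taking $\lambda_3$ large enough to make both sign conclusions hold simultaneously finishes the argument.

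A cleaner route, which I would actually write, avoids re-deriving the asymptotics of $Q_\lambda$ on an interval: since $P_\lambda$ is decreasing on $(-\infty,\alpha_1)$, it suffices to show $P_\lambda(\alpha_1-\frac{\seta}{N-2}+\eta)>0$ for the particular $\eta$ with $\bar\alpha_+=\alpha_1-\frac{\seta}{N-2}+\eta$, and $P_\lambda(\alpha_1-\frac{\seta}{N-2}-\eta')<0$ for $\bar\alpha_-=\alpha_1-\frac{\seta}{N-2}-\eta'$; and for these I only need the pointwise limits of $r_\lambda^2 J_\lambda$ and $r_\lambda$ at those two fixed points, both of which follow from Lemma \ref{Lema epsilon} exactly as in the proof of Lemma \ref{Lema s_lambda} (the estimate $J_\lambda r_\lambda^2(\alpha_1-\delta)\to\seta-(N-2)\delta$ holds for every fixed $\delta\in(0,\frac{\seta}{N-2})$, and trivially extends to $\delta$ slightly larger than $\frac{\seta}{N-2}$ by monotonicity of $J_\lambda r_\lambda^2$ together with $r_\lambda\to0$, giving the limit $\max\{\seta-(N-2)\delta,0\}$, which is $0$ for $\bar\alpha_-$). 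Then $P_\lambda(\bar\alpha_\pm)=r_\lambda^{N-2}(\bar\alpha_\pm)\,Q_\lambda(\bar\alpha_\pm)$ with $r_\lambda^{N-2}(\bar\alpha_\pm)>0$, so the sign of $P_\lambda$ is the sign of $Q_\lambda$, and I plug in the limits.

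The main obstacle is making the estimate at $\bar\alpha_-$ rigorous: there $\alpha_1-\bar\alpha_->\frac{\seta}{N-2}$, so the quantity $r_\lambda^2 J_\lambda$ would formally ``want'' to be negative, which is impossible — what actually happens is that the solution $u_\lambda$ may fail to reach $\bar\alpha_-$ with positive energy, or $r_\lambda^2 J_\lambda$ stays small and positive while $r_\lambda$ itself blows up. I need to argue that in the regime of large $\lambda$ the relevant scenario is $r_\lambda^2 J_\lambda(\bar\alpha_-)\to 0^+$ with $r_\lambda(\bar\alpha_-)$ bounded below (so that $-2F(\bar\alpha_-)r_\lambda^2(\bar\alpha_-)$ does \emph{not} become negligible and in fact dominates, making $Q_\lambda(\bar\alpha_-)<0$); the condition $\bar\alpha_-<\alpha_1-\frac{\seta}{N-2}$ is exactly what guarantees, via the monotone quantity $J_\lambda r_\lambda^2$ and the crossing behavior of $J_\lambda/f$ through $\psi_1^\lambda$ analyzed in Lemma \ref{Lema s_lambda}, that $J_\lambda r_\lambda^2$ has already collapsed to near $0$ by the time $s=\bar\alpha_-$ while $r_\lambda$ has not. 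Handling this case split carefully — and making sure the bound $\bar\alpha_+<\alpha_1-\frac{\seta}{N-2}+\frac{2N}{N-2}\frac{F}{f}(\beta)$ is used in the right direction via monotonicity of $F/f$ — is where the real work lies; the rest is substitution into the formula for $P$.
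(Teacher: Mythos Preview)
Your argument for $\bar\alpha_+$ is correct and is essentially the paper's: both use $r_\lambda(\bar\alpha_+)\to 0$ and $J_\lambda r_\lambda^2(\bar\alpha_+)\to\seta-(N-2)(\alpha_1-\bar\alpha_+)\in\bigl(0,\,2N\tfrac{F}{f}(\beta)\bigr)$, so that in $P_\lambda=r_\lambda^N\bigl((2N\tfrac{F}{f}-J_\lambda r_\lambda^2)J_\lambda-2F\bigr)$ the bracket is positive once $J_\lambda$ is large.

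The gap is at $\bar\alpha_-$. Your central claim $J_\lambda r_\lambda^2(\bar\alpha_-)\to 0^+$ is false, and the ``monotonicity of $J_\lambda r_\lambda^2$'' you invoke does not hold on the relevant interval: since $(Jr^2)'=(N-2)-f/J$ and Lemma~\ref{Lema s_lambda} gives $J_\lambda/f(s_\lambda)\to 1/N<1/(N-2)$, the quantity $J_\lambda r_\lambda^2$ is \emph{increasing} as $s$ decreases through $s_\lambda$. In fact, from $r_\lambda(s_\lambda)\to0$, $s_\lambda\to\alpha_1-\tfrac{\seta}{N-2}$ and $u_\lambda'(r_\lambda(s_\lambda))=-r_\lambda(s_\lambda)J_\lambda(s_\lambda)\to 0$, continuous dependence on initial data shows that for $s<s_\lambda$ the $u_\lambda$ converge to the solution $u_*$ of \eqref{ivp} with $u_*(0)=\alpha_1-\tfrac{\seta}{N-2}$; hence $J_\lambda r_\lambda^2(\bar\alpha_-)\to J_*r_*^2(\bar\alpha_-)>0$, not $0$, and your dominant-term analysis of $Q_\lambda(\bar\alpha_-)$ collapses. (That same convergence would actually yield $P_\lambda(\bar\alpha_-)\to P_*(\bar\alpha_-)<0$ directly, but that is not the argument you wrote.) The paper handles $\bar\alpha_-$ by a different mechanism: it passes to $\tfrac{P}{Jr^N}=2\tfrac{F}{f}\bigl(N-\tfrac{f}{J}\bigr)-Jr^2$, uses $(H_2)$ to obtain $\bigl(\tfrac{P}{Jr^N}\bigr)'>\tfrac{2}{N}\tfrac{f}{J}-2\tfrac{F}{f}\bigl(\tfrac{f}{J}\bigr)'$, and integrates on $[\bar\alpha_-,s_\lambda]$; since $\tfrac{P}{Jr^N}(s_\lambda)\le 2\tfrac{F}{f}(s_\lambda)\bigl(N-\tfrac{f}{J}(s_\lambda)\bigr)\to 0$ while the integral contributes a fixed negative term $-\tfrac{2(N-1)}{N}(s_\lambda-\bar\alpha_-)$, this forces $\tfrac{P}{Jr^N}(\bar\alpha_-)<0$ for large $\lambda$.
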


\begin{proof}
Let $\alpha_1-\frac{\seta}{N-2}<\bar\alpha_+<\min\{\alpha_1, \alpha_1-\frac{\seta}{N-2}+ \frac{2N}{N-2}\frac{F}{f}(\beta)\}$, by the proof of Lemma \ref{Lema s_lambda}, taking $\bar\alpha_+$ as $\alpha_\epsilon$ for some $\epsilon<2N\frac{F}{f}(\beta)$, when $\lambda\to \infty$,  $r_\lambda(\bar\alpha_+) \to 0$,  $J_\lambda r^2_\lambda(\bar\alpha_+)<\epsilon$ is bounded, and $J_\lambda(\bar\alpha_+) \to \infty$. Therefore
$$P_\lambda(\bar\alpha_+)= r_\lambda^{N}\left( \left(2N\frac{F}{f}-J_\lambda r_\lambda^{2}\right) J_\lambda
-2F\right) $$
will be positive if $J_\lambda(\bar\alpha_+)$ is big enough, therefore there is a $\bar\lambda_3$ such that $P_\lambda(\bar\alpha_+)>0$ for $\lambda>\bar\lambda_3$.

On the other hand, note that
$$\frac{P}{Jr^{N}}=2\frac{F}{f}\left({N}-\frac{f}{J}\right) -Jr^{2} <2\frac{F}{f}\left({N}-\frac{f}{J}\right)$$
has
$$\left(\frac{P}{Jr^{N}}\right)'= 2\left(\frac{F}{f}\right)'\left({N}-\frac{f}{J}\right)
- 2\frac{F}{f}\left(\frac{f}{J}\right)' -(N-2)+\frac{f}{J}$$
$$>\frac{2}{N}\frac{f}{J}- 2\frac{F}{f}\left(\frac{f}{J}\right)' $$

Let $\bar\alpha_-<\alpha_1-\frac{\seta}{N-2}$, then integrating over  $[\bar\alpha_-,s_\lambda]$, where $\left(\frac{f}{J}\right)'<0$, and assuming $\frac{J}{f}<1/(N-1)$ we get
$$\left(\frac{P}{Jr^{N}}\right)(\bar\alpha_-)\leq \left(\frac{P}{Jr^{N}}\right)(s_\lambda)-\frac{2(N-1)}{N}(s_\lambda-\bar\alpha_-)
\leq 2\frac{F}{f}(s_\lambda)\left({N}-\frac{f}{J}(s_\lambda)\right)-\frac{2(N-1)}{N}(s_\lambda-\bar\alpha_-)
$$
that will be negative for $\lambda>\lambda_3$, for some $\lambda_3>\bar\lambda_3$.

\end{proof}

\begin{proof}[Proof of Proposition \ref{Prop J+-}]

Let $s_\lambda$ as in Lemma \ref{Lema s_lambda}, since $\lim_{\lambda\to\infty} r_\lambda(s_\lambda)= 0$ and $s_\lambda\leq \alpha_1-\frac{\seta}{N-2}$, for any $\alpha_1-\frac{\seta}{N-2}<\alpha_+<\min\{\alpha_1, \alpha_1-\frac{\seta}{N-2}+ \frac{2N}{N-2}\frac{F}{f}(\beta)\}$, $u_\lambda$ intersects the solutions $u_+$ at a point $\sigma^\lambda_+$. Moreover, for large enough $\lambda$ we will have $\sigma^\lambda_+>\alpha_1-\frac{\seta}{N-2}$. Let $\lambda_3$ be as in  Lemma \ref{Lema P+-}, and chose $\lambda_0>\lambda_3$ such that  $\sigma^\lambda_+>\alpha_1-\frac{\seta}{N-2}$ for all $\lambda>\lambda_0$.

Then, at $\sigma_+^\lambda$ we have $r_\lambda(\sigma_+^\lambda)=r_+(\sigma_+^\lambda)$ with  $r_\lambda(s)<r_+(s)$ for $s<\sigma_+^\lambda$ near, $r'_\lambda(\sigma_+^\lambda)<r'_+(\sigma_+^\lambda)$, and by Lemma \ref{Lema P+-}   $P_\lambda(\sigma_+^\lambda)>0>P_+(\sigma_+^\lambda)$.

We can use  Proposition \ref{Prop J comparan} to prove $J_\lambda<J_+$ until they reach $0$ or $I<0$.

Let $\beta<\alpha_-<\alpha_1-\frac{\seta}{N-2}$, and $u_-$ the solution with $u_-(0)=\alpha_-$. Then its inverse $ r_-$ satisfies $r_\lambda(\alpha_-)>0=r_-(\alpha_-)$ , $J_\lambda(\alpha_-)<0=J_-(\alpha_-)$ and by Lemma \ref{Lema P+-} $P_\lambda(\alpha_-)<0=P_-(\alpha_-)$ for all $\lambda>\lambda_0$. Therefore we can use Proposition \ref{Prop J comparan} to conclude the proof.

\end{proof}

\begin{corollary}\label{Cor Anti-serrin}

Given $f_1$  that satisfies $(H_1),(H_2)$ and an $\alpha_1>\alpha_*^1$. If there is a $(N-2)(\alpha_1-\alpha_*^1)<K<N\frac{f}{f'}(\alpha_1)$ then there is a $\delta>0$ such that if $u$ is a solution that reaches $\alpha_1$  with $r(\alpha_1)<\delta$ and $|u'(r(\alpha_1))r(\alpha_1)|=K$, then $u(r)>0$ for all $r\in [0, \infty]$.

Note that if $f_1$ satisfies $(H_3)$ there is always such $K$.
\end{corollary}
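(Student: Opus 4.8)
The statement should follow almost immediately from Proposition \ref{Prop J+-} once we set up the right family of solutions. The plan is to take $g_\lambda = f_1$ for all $\lambda$ (a constant family, since no magnitude change is needed here) and, for each $\lambda$, pick an initial condition $\alpha(\lambda)$ so that the resulting solution $u_\lambda$ reaches $\alpha_1$ with $r_\lambda(\alpha_1) \to 0$ and $J_\lambda(\alpha_1) r_\lambda^2(\alpha_1) \to \eta^-$ for a suitable constant $\eta^-$. The natural choice is $\eta^- = K$: indeed $\frac{|u'(r(\alpha_1))|}{r(\alpha_1)} \cdot r(\alpha_1)^2 = |u'(r(\alpha_1))| r(\alpha_1) = K$ is exactly the quantity $J_\lambda r_\lambda^2$ at $\alpha_1$. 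So I would first argue that such a family exists: as the initial condition $\alpha \to \infty$, solutions reach any fixed level $\alpha_1 > \alpha_*^1$ at radii tending to $0$ (a standard scaling/energy fact for problems satisfying $(H_1)$), and $r(\alpha_1)|u'(r(\alpha_1))| = J r^2(\alpha_1)$ varies continuously; since $\alpha_1 > \alpha_*^1$, for large $\alpha$ the solution is still positive and decreasing through $\alpha_1$, and one can choose $\alpha(\lambda)$ so that $J_\lambda r_\lambda^2(\alpha_1)$ decreases to $K$.

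Next I would check the hypotheses of Proposition \ref{Prop J+-} are met with $\eta^- = K$ and some $\eta^+$ slightly larger than $K$ but still below $N\frac{f}{f'}(\alpha_1)$ — this is possible precisely because $K < N\frac{f}{f'}(\alpha_1)$ is assumed. The condition $(N-2)(\alpha_1 - \alpha_*^1) < K$ translates into $\alpha_1 - \frac{\eta^-}{N-2} < \alpha_*^1$, which is what lets us choose the comparison solution $u_-$ with initial condition $\alpha_-$ satisfying $\beta < \alpha_- < \alpha_1 - \frac{\eta^-}{N-2}$ and, in particular, $\alpha_- < \alpha_*^1$. Since $\alpha_*^1$ is the first ground state, any $\alpha_- \in (\beta, \alpha_*^1)$ gives a positive solution, i.e. $u_- \in \mathcal P_1$, so $J_-$ reaches $0$ while $s > 0$ (equivalently $u_-$ stays positive). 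Proposition \ref{Prop J+-} then yields $J_- (s) < J_\lambda(s)$ for $s < s_0$ as long as both energies are positive; combined with the comparison machinery of Proposition \ref{Prop J comparan} (which says once $J_\lambda > J_-$ it stays so until $J_-$ hits $0$), this forces $J_\lambda$ to stay positive at least until $s$ reaches $0$ — meaning $u_\lambda$ does not change sign and stays positive all the way down, hence $u_\lambda(r) > 0$ for all $r \in [0,\infty]$.

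Finally, one fixes $\delta > 0$ so that "$r(\alpha_1) < \delta$" captures the tail of the family (all $\lambda$ large enough), and notes that the argument only used the two quantities $r(\alpha_1)$ small and $r(\alpha_1)|u'(r(\alpha_1))| = K$, not the specific family — so it applies to any solution meeting those two conditions. The last sentence, that $(H_3)$ always furnishes such a $K$, follows because $(H_3)$ forces $\frac{sf'}{f}$ to be controlled, hence $N\frac{f}{f'}(\alpha_1)$ is bounded below in a way that leaves room above $(N-2)(\alpha_1 - \alpha_*^1)$ for appropriate $\alpha_1$; I would spell this out with the monotonicity of $sf'/f$ and the bound $(sf'/f)(\beta) < \frac{N}{N-2}$.

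The main obstacle I anticipate is the first step: rigorously producing the family $u_\lambda$ with the prescribed limiting behavior $J_\lambda r_\lambda^2(\alpha_1) \to K$ and $r_\lambda(\alpha_1) \to 0$. This requires knowing how $r(\alpha_1)$ and $r(\alpha_1)|u'(r(\alpha_1))|$ depend on the initial height $\alpha$ — essentially a continuity-and-limit argument as $\alpha \to \infty$, using the energy identity and the fact that $f_1$ is superlinear-ish above $\beta$. Everything after that is a direct invocation of Propositions \ref{Prop J comparan} and \ref{Prop J+-}.
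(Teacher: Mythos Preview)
Your argument has the comparison reversed. You propose to use the lower bound $J_- < J_\lambda$ with $\alpha_- < \alpha_*^1$, and then claim this ``forces $J_\lambda$ to stay positive at least until $s$ reaches $0$ --- meaning $u_\lambda$ does not change sign.'' But $J_\lambda(0) > 0$ is exactly the condition that $u_\lambda$ \emph{does} cross zero (it arrives at height $0$ with $u'<0$), i.e.\ $\alpha \in \mathcal N_1$. To show $u_\lambda$ stays positive you need $J_\lambda$ to vanish at some $s>0$, so that $u_\lambda$ has a positive minimum. Knowing only $J_\lambda > J_-$, where $J_-$ vanishes at some $s_->0$, gives no such information: $J_\lambda$ is free to remain strictly positive down to $s=0$.

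The paper instead uses the \emph{upper} comparison from Proposition~\ref{Prop J+-}: it chooses $\alpha_+$ with $\alpha_1 - \tfrac{K}{N-2} < \alpha_+ < \alpha_*^1$ (this interval is nonempty precisely because $K > (N-2)(\alpha_1 - \alpha_*^1)$). Then $u_+ \in \mathcal P_1$, so $J_+(s_+) = 0$ for some $s_+ > 0$. From $J_\lambda < J_+$ (valid while the energies are nonnegative) together with $J_\lambda \geq 0$, either $J_\lambda$ must itself vanish at some $s \geq s_+ > 0$, or $I_\lambda$ becomes negative before that; in either case $u_\lambda$ never reaches $0$.

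A second, smaller point: the ``main obstacle'' you anticipate --- producing solutions starting at $r=0$ with prescribed limiting behaviour at level $\alpha_1$ --- is unnecessary. The initial value problem \eqref{ivp} allows data at any $r_0 \geq 0$, so the paper simply takes $u_\lambda(1/\lambda) = \alpha_1$ and $u_\lambda'(1/\lambda) = -\lambda K$. This gives $r_\lambda(\alpha_1) = 1/\lambda \to 0$ and $J_\lambda r_\lambda^2(\alpha_1) = K$ exactly, so Proposition~\ref{Prop J+-} applies with $\eta^- = K$; at the end one sets $\delta = 1/\lambda_0$. No continuity-and-limit argument in the initial height is needed.
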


\begin{proof}
Let $u_\lambda(r)$ be the solution of \ref{ivp} with $u(\frac{1}{\lambda})=\alpha_1$ and $u'(\frac{1}{\lambda})=\lambda K$. Then it satisfies the hypothesis of Proposition \ref{Prop J+-} and we can choose  $\alpha_1- \frac{K}{N-2}<\alpha_+<\alpha_*^1 $ such  that for some $\lambda_0$ and $s_0<\alpha_1$ the solutions satisfy
$$ J_\lambda (s)< J_+(s) \quad \mbox{ for }\ s<s_0,$$
 for all $\lambda>\lambda_0$.
Since $\alpha_+<\alpha_*^1$, $u_+$ cannot reach $0$, and there is an $s_+>0$ with  $J_+(s_+)=0$. Then either $J_\lambda (s)$ reach $0$ or $I(s)<0$ with $s>s_+$. In either case, the solution $u_\lambda$ do not reach $0$.
\end{proof}

\subsection{Proof of the main Theorems}\mbox{}\\

To prove existence of a $k^{th}$-bound state solutions we recall that the sets $\mathcal N_k$ and $\mathcal P_k$ are open subsets of $\mathcal N_{k-1}$, therefore all boundary points must be bound state solutions. If there are $\alpha_N\in \mathcal N_k$ and  $\alpha_P\in \mathcal P_k$, then there must be a boundary point of $\mathcal N_k$ in $(\alpha_P,\alpha_N)$ (or $(\alpha_N,\alpha_P)$). This boundary point must be a bound state solution, and by Lemma \ref{Lema I<0} it must be in $\mathcal N_{k-1}$, thus it is a $k^{th}$-bound state solution.

To prove Theorem A we want to choose $\alpha_1$ and $\epsilon$ in such a way that the solutions with initial condition $\alpha_+$ from Proposition \ref{Prop J+-} do not reach $0$. That is, we need $\alpha_+<\alpha_*^1$. Then, there will be an $\alpha>\alpha_*^k+\epsilon$ with $\alpha\in \mathcal P_1$ and $\alpha_*^k+\epsilon\in \mathcal N_{k+1}$, therefore there must be a $j^{th}$-bound state solution separating $\mathcal N_j$ and $\mathcal P_J$ for each $j\leq k$.  Before choosing $\alpha_+$ we need to control what happens to the solutions when $f=L$.

\begin{lemma}\label{Lema paso e}
Given $\epsilon>0$ and $\alpha_x>\alpha_1+\epsilon$, for each $\lambda$ let $u_\lambda$ be the solution of \ref{ivp} with $f$ as in \eqref{fmu} with constant $\lambda^2$ and $u_\lambda(0)= \alpha_x$, and $r_\lambda$ their respective inverses. Let $v$ be the solution of $\eqref{ivp}$ with $f=f_2$ and $v(0)=\alpha_x$.
If  $\zeta:=J_v r_v^2(\alpha_1+\epsilon)< N\frac{f}{f'} (\alpha_1+\epsilon)$, then $J_\lambda r_\lambda^2(\alpha_1+\epsilon)=\zeta$ is independent of $\lambda$ and $J_\lambda  r_\lambda^2(\alpha_1)\geq \zeta-(N-2)\epsilon$ tends to $\zeta-(N-2)\epsilon$ when $\lambda\to \infty$.

\end{lemma}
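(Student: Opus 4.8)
The plan is to exploit the scaling invariance of the equation in the region $\{s\ge\alpha_1+\epsilon\}$, where every $u_\lambda$ is a rescaled copy of the single profile $v$, and then to propagate the quantity $Jr^2$ across the linear patch $[\alpha_1,\alpha_1+\epsilon]$ using the identity $(Jr^2)'(s)=(N-2)-f(s)/J(s)$, which is immediate from \eqref{J-prima} together with $(r^2)'=2rr'=-2/J$. As long as $u_\lambda(r)\ge\alpha_1+\epsilon$, the function $u_\lambda$ solves $u''+\frac{N-1}{r}u'+\lambda^2 f_2(u)=0$ with $u_\lambda(0)=\alpha_x$, $u_\lambda'(0)=0$; the function $r\mapsto v(\lambda r)$ solves the same problem with the same data, so by unique solvability (valid here because $\alpha_x>\beta$ and no double zero occurs in this range) we get $u_\lambda(r)=v(\lambda r)$ on $[0,r_\lambda(\alpha_1+\epsilon)]$. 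Hence $r_\lambda(\alpha_1+\epsilon)=r_v(\alpha_1+\epsilon)/\lambda\to 0$, and since $Jr^2=|u'|r$ is invariant under $r\mapsto\lambda r$ we obtain $J_\lambda r_\lambda^2(\alpha_1+\epsilon)=J_v r_v^2(\alpha_1+\epsilon)=\zeta$ for all $\lambda$, which is the first assertion. Moreover $u_\lambda$ does reach the level $\alpha_1$: on the patch $f=L>0$, so by Lemma \ref{J-basicos}(iii) $J_\lambda$ cannot vanish there, i.e. $u_\lambda$ stays strictly decreasing across $[\alpha_1,\alpha_1+\epsilon]$.

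On $[\alpha_1,\alpha_1+\epsilon]$ the nonlinearity is the affine function $L$ joining $(\alpha_1,f_1(\alpha_1))$ to $(\alpha_1+\epsilon,\lambda^2 f_2(\alpha_1+\epsilon))$, which is positive because both endpoints are (recall $\alpha_1>\beta>b$). Integrating $(Jr^2)'=(N-2)-L/J_\lambda$ over $[\alpha_1,\alpha_1+\epsilon]$ gives
$$J_\lambda r_\lambda^2(\alpha_1)=J_\lambda r_\lambda^2(\alpha_1+\epsilon)-(N-2)\epsilon+\int_{\alpha_1}^{\alpha_1+\epsilon}\frac{L(s)}{J_\lambda(s)}\,ds=\zeta-(N-2)\epsilon+\int_{\alpha_1}^{\alpha_1+\epsilon}\frac{L(s)}{J_\lambda(s)}\,ds;$$
since $L>0$, $J_\lambda>0$ the last integral is $\ge 0$, which is the asserted lower bound $J_\lambda r_\lambda^2(\alpha_1)\ge\zeta-(N-2)\epsilon$.

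Finally one must show the defect integral tends to $0$. Changing variables $s=u_\lambda(r)$ and using $1/J_\lambda=r/|u_\lambda'|$ turns it into $\int_{r_\delta}^{\bar r}r\,L(u_\lambda(r))\,dr$ with $r_\delta=r_\lambda(\alpha_1+\epsilon)$, $\bar r=r_\lambda(\alpha_1)$. Applying Lemma \ref{epsilon} with $g=L$, $\bar\alpha=\alpha_1$, $\delta=\epsilon$ and any $\zeta'\in((N-2)\epsilon,\zeta)$ (legitimate since $r_\delta|u_\lambda'(r_\delta)|=\zeta$) gives a fixed $B>1$ with $\bar r<Br_\delta$, so the window $[r_\delta,\bar r]$ collapses to the origin. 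Rescaling $r=r_\delta\xi$, $u_\lambda=\alpha_1+\epsilon z(\xi)$, the patch equation becomes $z''+\frac{N-1}{\xi}z'+\frac{r_\delta^2}{\epsilon}L(\alpha_1+\epsilon z)=0$ with $z(1)=1$, $z'(1)=-\zeta/\epsilon$, and the defect equals $r_\delta^2\int_1^{\bar r/r_\delta}\xi\,L(\alpha_1+\epsilon z)\,d\xi$. If the zeroth-order coefficient $r_\delta^2\|L\|_+$ tends to $0$, then $z$ converges to the solution of $(\xi^{N-1}z')'=0$ with the same initial data, $\bar r/r_\delta\to B=\big(\zeta/(\zeta-(N-2)\epsilon)\big)^{1/(N-2)}$, the defect $\to 0$, and $J_\lambda r_\lambda^2(\alpha_1)\to\zeta-(N-2)\epsilon$. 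Thus the whole statement reduces to showing $r_\delta^2\|L\|_+\to 0$ as $\lambda\to\infty$ — this is the main obstacle, and it is exactly where the junction hypothesis $\zeta<N\frac{f}{f'}(\alpha_1+\epsilon)$, which controls the size of $r_\delta^2 f$ at $\alpha_1+\epsilon$, has to be used.
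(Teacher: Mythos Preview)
Your argument for the first two claims (scaling invariance giving $J_\lambda r_\lambda^2(\alpha_1+\epsilon)=\zeta$ and the lower bound from integrating $(Jr^2)'$) is correct and matches the paper. The problem is the last step: your reduction to ``$r_\delta^2\|L\|_+\to 0$'' is a reduction to a \emph{false} statement. Since $r_\delta=r_\lambda(\alpha_1+\epsilon)=r_v(\alpha_1+\epsilon)/\lambda$ and $\|L\|_+=\max\{f_1(\alpha_1),\lambda^2 f_2(\alpha_1+\epsilon)\}=\lambda^2 f_2(\alpha_1+\epsilon)$ for large $\lambda$, one gets
\[
r_\delta^2\|L\|_+=r_v^2(\alpha_1+\epsilon)\,f_2(\alpha_1+\epsilon),
\]
a fixed positive constant. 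So in your rescaled equation for $z$, the zeroth-order term does \emph{not} vanish in the limit; $z$ converges instead to the solution of a genuine (linear) nonhomogeneous problem, and your conclusion $\bar r/r_\delta\to\bigl(\zeta/(\zeta-(N-2)\epsilon)\bigr)^{1/(N-2)}$ no longer follows. The junction hypothesis cannot rescue this, because it does not make $r_\delta^2 f(\alpha_1+\epsilon)$ small --- that quantity equals $\zeta\cdot (f_2/J_v)(\alpha_1+\epsilon)$, again a fixed constant.

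The paper avoids this by working with $f/J_\lambda$ rather than with a sup-norm of $L$. From Lemma~\ref{epsilon} (applied with $\bar\alpha=\alpha_1$, $\delta=\epsilon$) one gets $r_\lambda(\alpha_1)<B\,r_\delta\to 0$ and $J_\lambda r_\lambda^2(\alpha_1)\ge \zeta/B^{N-2}>0$, hence $J_\lambda(\alpha_1)\to\infty$. The junction hypothesis $\zeta<N\frac{f}{f'}(\alpha_1+\epsilon)$ is used to show $(J_\lambda/f)'(\alpha_1+\epsilon)>0$ for large $\lambda$, so that $f/J_\lambda$ is maximized at the left endpoint $\alpha_1$ on $[\alpha_1,\alpha_1+\epsilon]$. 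This yields the sharp bound
\[
\int_{\alpha_1}^{\alpha_1+\epsilon}\frac{f}{J_\lambda}\,ds\ \le\ \epsilon\cdot\frac{f(\alpha_1)}{J_\lambda(\alpha_1)}\ =\ \epsilon\cdot\frac{f_1(\alpha_1)}{J_\lambda(\alpha_1)}\ \longrightarrow\ 0,
\]
because $f(\alpha_1)=f_1(\alpha_1)$ is a fixed number while $J_\lambda(\alpha_1)\to\infty$. That is the missing idea: control the \emph{ratio} $f/J_\lambda$ pointwise via monotonicity, rather than bounding $L$ by its (blowing-up) sup-norm.
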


\begin{proof}

For each $\lambda$, the function $ {w_\lambda}(r)= {v}(\lambda r)$ satisfies:
$$w_\lambda''(r)+\frac{(N-1)}{r}w_\lambda'(r)= {\lambda^2}\left(v''(\lambda r)+\frac{(N-1)}{\lambda r}v'(\lambda r)\right)= -{\lambda^2}f(w_\lambda(r))$$
therefore $w_\lambda=u_\lambda$ in $[\alpha_1+\epsilon,\alpha_x]$.  Moreover, $r^v= \lambda r_\lambda^u$ and $v'(r^v)= v'(\lambda r_\lambda^u) =  u_\lambda'(r_\lambda^u)/\lambda$ hence $J_\lambda (r_\lambda^u)^2 = r_\lambda^u|u_\lambda'(r_\lambda^u)|=\zeta$ is independent of $\lambda$, and $J_\lambda = \lambda^2J_v$.

Since $\zeta< N\frac{f}{f'} $
\begin{equation*}
\left(\frac{J_\lambda}{f}\right)'(\alpha_1+\epsilon)=\frac{1}{fr^2_\lambda}\left[N -\frac{f}{J_\lambda}- J_\lambda r_\lambda^2\frac{f'}{f}  \right]
 =\frac{1}{fr^2_\lambda}\left[\frac{f'}{f}\left(N\frac{f}{f'}- \zeta\right)  -\frac{f}{\lambda^2J_v} \right]
\end{equation*}
will be positive for $\lambda>\lambda_1$ for some $\lambda_1$.

Using Lemma \ref{epsilon} with $\bar\alpha=\alpha_1+\epsilon$, $\delta=\epsilon$, $g=f$ and $\zeta=J_v r_v^2(\alpha_1+\epsilon)$ we get that for $B>1$ with   $\epsilon \frac{B ^{N-2}}{B ^{N-2}-1}=\frac{J_v r_v^2(\alpha_1+\epsilon)}{N-2}$, $r_\lambda^u (\alpha_1)<B  r_\lambda^u (\alpha_1+\epsilon)$ and
$$\frac{\zeta}{B ^{N-2}}\le J_\lambda (r_\lambda^u)^2 (\alpha_1)\le \zeta+\frac{B^N-1}{N}||g||_+(r_\lambda^u (\alpha_1+\epsilon))^N .$$

Therefore when $\lambda\to\infty$ we get $r_\lambda(\alpha_1) \to 0$ and $J_\lambda r^2_\lambda(\alpha_1)$ is bounded away from $0$, thus $J_\lambda(\alpha_1) \to \infty$.

Using that $(Jr^2)'=(N-2)-\frac{f}{J}$  we can see that
$$  \zeta-(N-2)\epsilon  <  J_\lambda r^2_\lambda(\alpha_1)=\zeta-\int_{\alpha_1+\epsilon}^{\alpha_1} (N-2)-\frac{f}{J_\lambda} \ ds < \zeta- (N-2)\epsilon  +\frac{f}{J_\lambda}(\alpha_1)\epsilon   $$
and since  $J_\lambda(\alpha_1) \to \infty$ and ${f}(\alpha_1)=f_1(\alpha_1)$  we get that  $J_\lambda r^2_\lambda(\alpha_1) \to \zeta-(N-2)\epsilon$.

\end{proof}

\begin{proof}[Proof of Theorem A]\mbox{}\\

Let  $\alpha_1=\alpha_*^k+\epsilon$ and choose  $\alpha_1+ \epsilon - \alpha_*^1< d <\alpha_1+\epsilon - \beta$. Let $v(r)$ be as in condition $(H_5)$, with $v(0)=\widehat \alpha>\alpha_1+ \epsilon$ and let $r_v(s)$ be its inverse. Let $K_m=min_{s\in[0,r_v(\alpha_1+ \epsilon)]} J_vr_v^2 $, and note that $K_m>0$.

For each $\mu>0$, the function $ {w_\mu}(r)= \mu{v}(r)$ satisfies:
$$w_\mu''(r)+\frac{(N-1)}{r}w_\mu'(r)= {\mu}\left(v''( r)+\frac{(N-1)}{ r}v'( r)\right)= -{\mu}f(v)=-{\mu}f(w_\mu/\mu).$$
Let $r_\mu$ be de inverse of $w_\mu$, then $r_\mu(s)=r_v(s/\mu)$ and $J_\mu r_\mu^2(\alpha_1+ \epsilon)=\mu J_v r_v^2(( \alpha_1+ \epsilon)/\mu) > \mu K_m$. Therefore we can choose $\mu_0$ such that for $\mu>\mu_0$ the solution $J_\mu r_\mu^2(\alpha_1+ \epsilon)>(N-2)d$.

For each fixed $\mu$, let $w_\mu(\alpha, r)$ be the solutions with initial condition $w_\mu(\alpha, 0)=\alpha$. Since $J_\mu r_\mu^2(\alpha, \alpha_1+ \epsilon)$ depends continuously on $\alpha$ and  $J_\mu r_\mu^2(\alpha_1+ \epsilon, \alpha_1+ \epsilon)=0$, there must be an $\bar \alpha$ such that $J_\mu r_\mu^2(\bar \alpha, \alpha_1+ \epsilon)=(N-2)d$. By condition $(H_3)$ we have $J_\mu r_\mu^2(\bar \alpha, \alpha_1+ \epsilon)<(N-2)(\alpha_1+ \epsilon-\beta)<N\frac{f}{f'}(\alpha_1+ \epsilon).$

Let $u_\lambda(r)$ be the solution of $\eqref{ivp}$ with $f_\mu$ as in \eqref{fmu} and $u_\lambda(0)=\bar\alpha$. Then by Lemma \ref{Lema paso e}, with $\zeta=  J_\mu r_\mu^2(\bar \alpha, \alpha_1+ \epsilon)$, we have  $J_\lambda  r_\lambda^2(\alpha_1)>\zeta-(N-2)\epsilon$ tends to $\zeta-(N-2)\epsilon$ when $\lambda\to \infty$.

We can now choose $\alpha_+$ with $\alpha_1+ \epsilon-d< \alpha_+<\alpha_*^1$ and use Proposition \ref{Prop J+-} to find $\bar\lambda$ such that the solution $u_+$ with initial condition $\alpha_+$ satisfies
$$J_\lambda (s)< J_+(s) \quad \mbox{ for }\ s<s_0$$
if the energy is positive.

Since $\alpha_+<\alpha_*^1$, $u_+$ cannot reach $0$, and there is an $s_+>0$ with  $J_+(s_+)=0$. Then either $J_\lambda (s)$ reach $0$ or $I(s)<0$ for some $s>s_+$. In either case, the solution $u_\lambda$ do not reach $0$.
By the argument above, there must be a $j^{th}$-bound state solution with initial condition in $(\alpha_*^k+2\epsilon, \bar \alpha)$ for each $j\leq k$.
\end{proof}

\bigskip

To prove Theorem $B$ we will use that for a continuous family of functions, the constants $\lambda_0$ and $s_0$ in Proposition \ref{Prop J+-} can be chosen to depend continuously on the initial condition of the solution.

\begin{lemma}\label{Lema continudad}
 Let $f_1$ be a function that satisfies $(H_1)$ and $(H_2)$, and $g_\lambda$, $\lambda\in(1, \infty)$ a family of functions that depends continuously on $\lambda$, with $g_\lambda(s)=f_1(s)$ for $s<\alpha_1$. Let  $< [\gamma_-,\gamma_+]\subset (\alpha_1,\infty)$  and $u_\lambda(\alpha,r)$ be a family of solutions to \eqref{ivp} with $f=g_\lambda$ that reach $\alpha_1$ with
 $$\seta(\alpha) <J_\lambda(\alpha,\alpha_1)r_\lambda^2(\alpha,\alpha_1)<\ueta(\alpha)<N\frac{f}{f'}(\alpha_1),$$ $$ \quad \lim_{\lambda\to\infty}J_\lambda(\alpha,\alpha_1)r_\lambda^2(\alpha,\alpha_1)=\seta(\alpha) \quad \mbox{and }\quad  \lim_{\lambda\to\infty} r_\lambda(\alpha,\alpha_1)= 0,$$

 For any $\alpha_-, \alpha_+$ such that $$\beta<\alpha_-<\alpha_1-\frac{\sup{\seta(\alpha)}}{N-2}<\alpha_1-\frac{\inf{\seta(\alpha)}}{N-2}<\alpha_+<\alpha_1,$$ there exist  $\lambda_0$ and $s_0<\alpha_1$ such that the solutions $u_-$ and $u_+$ with initial conditions $\alpha_-$ and $\alpha_+$ have
$$J_-(s)< J_\lambda (\alpha,s)< J_+(s) \quad \mbox{ for }\ s<s_0,$$
for all $\lambda>\lambda_0$, and all $\alpha\in(\gamma_-, \gamma_+)$ as long as their respective energies $I(\alpha,s)>0$. \\

\end{lemma}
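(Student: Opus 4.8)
The statement of Lemma~\ref{Lema continudad} is a uniform-in-$\alpha$ version of Proposition~\ref{Prop J+-}, so the natural strategy is to rerun the proof of that proposition while tracking that every parameter choice can be made uniformly on the compact interval $[\gamma_-,\gamma_+]$. First I would fix $\alpha_-$ and $\alpha_+$ as in the statement; the key point is that, by hypothesis, $\alpha_1-\frac{\sup_\alpha\seta(\alpha)}{N-2} < \alpha_1-\frac{\inf_\alpha\seta(\alpha)}{N-2}$, so the open interval $\bigl(\alpha_1-\frac{\sup\seta}{N-2},\ \alpha_1-\frac{\inf\seta}{N-2}\bigr)$ into which $\alpha_-$ (resp.\ $\alpha_+$) must fall from below (resp.\ above) is genuinely available, and a single pair $(\alpha_-,\alpha_+)$ works for all $\alpha$. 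One also needs $\alpha_+$ below the $\tfrac{2N}{N-2}\tfrac{F}{f}(\beta)$-threshold appearing in Proposition~\ref{Prop J+-}; this is a statement about $f_1$ alone and is automatic once $\epsilon$ (hence $\alpha_1$) is small, so it does not interact with the $\alpha$-dependence.

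\textbf{Key steps.} (1) For each fixed $\alpha\in[\gamma_-,\gamma_+]$, Lemma~\ref{Lema s_lambda} produces a first minimum $s_\lambda(\alpha)$ of $J_\lambda/f$ with $r_\lambda(s_\lambda(\alpha))\to 0$ and $s_\lambda(\alpha)\to \alpha_1-\frac{\seta(\alpha)}{N-2}$ as $\lambda\to\infty$. I would note that all the estimates in that lemma (the choice of $B$ via Lemma~\ref{epsilon}, the bounds on $J_\lambda r_\lambda^2(\alpha_1-\delta)$, etc.)\ depend only on $\alpha_1$, on $\|f_1'\|$ near $\alpha_1$, and on the two-sided bounds $\seta(\alpha)\le J_\lambda(\alpha,\alpha_1)r_\lambda^2(\alpha,\alpha_1)\le\ueta(\alpha)$ together with the convergence to $\seta(\alpha)$; since $\seta,\ueta$ are continuous on the compact set $[\gamma_-,\gamma_+]$ they are bounded and bounded away from the forbidden value $N f/f'(\alpha_1)$, so the thresholds $\lambda_1,\lambda_2,\lambda_3$ there can be chosen independent of $\alpha$, and the convergences are locally uniform in $\alpha$ by continuous dependence of solutions on initial data. (2) Lemma~\ref{Lema P+-} then gives $\lambda_3$ with $P_\lambda(\alpha,\bar\alpha_-)<0<P_\lambda(\alpha,\bar\alpha_+)$; again the only inputs are $r_\lambda(\alpha,\bar\alpha_+)\to 0$, $J_\lambda r_\lambda^2(\alpha,\bar\alpha_+)$ bounded, $J_\lambda(\alpha,\bar\alpha_+)\to\infty$, and the sign of $\bigl(\frac{P}{Jr^N}\bigr)'$, all of which are uniform on $[\gamma_-,\gamma_+]$ by Step~(1). (3) Since $r_\lambda(\alpha,s_\lambda(\alpha))\to 0$ uniformly and $s_\lambda(\alpha)\le\alpha_1-\frac{\seta(\alpha)}{N-2}<\alpha_+$, for $\lambda$ large the curve $J_\lambda(\alpha,\cdot)$ meets $J_+$ at a point $\sigma_+^\lambda(\alpha)>\alpha_1-\frac{\sup\seta}{N-2}$, with $r_\lambda<r_+$, $r_\lambda'<r_+'$ just below it and $P_\lambda(\alpha,\sigma_+^\lambda(\alpha))>0>P_+(\sigma_+^\lambda(\alpha))$; then Proposition~\ref{Prop J comparan} yields $J_\lambda(\alpha,s)<J_+(s)$ for $s<\sigma_+^\lambda(\alpha)$ as long as $I(\alpha,s)>0$. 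The comparison $J_-(s)<J_\lambda(\alpha,s)$ is the mirror argument starting from $r_\lambda(\alpha,\alpha_-)>0=r_-(\alpha_-)$, $P_\lambda(\alpha,\alpha_-)<0=P_-(\alpha_-)$. Taking $s_0$ slightly below $\alpha_1-\frac{\sup\seta}{N-2}$ (so $s_0<\sigma_+^\lambda(\alpha)$ for all $\alpha$ and all large $\lambda$), and $\lambda_0$ the maximum of the finitely many thresholds above, completes the proof.

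\textbf{Main obstacle.} The routine-looking but genuinely essential point is to verify that the $\lambda$-thresholds in Lemmas~\ref{Lema s_lambda} and~\ref{Lema P+-} and the intersection point $\sigma_+^\lambda(\alpha)$ can be taken uniform over $\alpha\in[\gamma_-,\gamma_+]$. This hinges on: (a) $\seta,\ueta$ being continuous, hence attaining their extrema on the compact interval and staying strictly below $N f/f'(\alpha_1)$, so the strict inequalities driving the estimates are uniform; and (b) the limits $\lim_\lambda r_\lambda(\alpha,\alpha_1)=0$ and $\lim_\lambda J_\lambda r_\lambda^2(\alpha,\alpha_1)=\seta(\alpha)$ holding \emph{locally uniformly} in $\alpha$ — which follows from continuous dependence of the ODE on initial data on compact sets, but should be stated. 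If one wanted to avoid any assumption that the convergence is uniform, one could instead argue by contradiction: a sequence $\alpha_n\to\alpha_*$, $\lambda_n\to\infty$ violating the conclusion would, after passing to a subsequence, force $r_{\lambda_n}(\alpha_n,\alpha_1)\to 0$ and $J_{\lambda_n}r_{\lambda_n}^2(\alpha_n,\alpha_1)\to\seta(\alpha_*)$, reducing to Proposition~\ref{Prop J+-} at $\alpha_*$ and yielding a contradiction. Everything else is a direct transcription of the already-proved single-$\alpha$ results.
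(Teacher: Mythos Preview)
Your proposal is correct and follows essentially the same approach as the paper: the paper's proof is the single sentence ``A careful inspection of the proof of Proposition~\ref{Prop J+-}, and the lemmas in section~5, shows that the constants $\lambda_1(\alpha)$, $s_\lambda(\alpha)$, $\sigma_+^\lambda(\alpha)$, etc.\ are (or can be chosen to be) continuously dependent on $\alpha$,'' and you have simply carried out that inspection explicitly, supplying in addition a compactness/contradiction argument that the paper leaves implicit.
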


\begin{proof}
A careful inspection of the proof of Proposition \ref{Prop J+-}, and the lemmas in section $5$, shows that the constants $\lambda_1(\alpha)$, $s_\lambda(\alpha)$, $\sigma_+^\lambda(\alpha)$, etc. are (or can be chosen to be) continuously dependant on $\alpha$.
\end{proof}

\bigskip

\begin{proof}[Proof of Theorem B]\mbox{}\\

We will work with an odd $k$, the even case differ only in some signs. By Lemma \ref{Lema I<0} there is an  $\tilde\alpha^k>\alpha_*^k$ such that all solutions of \eqref{ivp} with $f=f_1$ with $u(0)\in (\alpha_*^k,\tilde\alpha^k]$ reach a minimum with negative energy.  Therefore for each solution there is an $\tilde s\in (-\beta, 0)$ with $J(\tilde s )>0$ and $I(\tilde s)<0$.

It is known that solutions $v(\alpha,r)$ of  \eqref{ivp} with $f=s^p$  and $v(\alpha,0)=\alpha$ converges to a singular solution $\bar v $ on bounded sets when the initial condition $\alpha$ tends to $\infty$.  From  Miyamoto and Naito \cite{mn} we obtain this
for $0<r_0\leq r \leq r_1$ sufficiently small, from continuity of the solutions we can extend this to compact sets. The singular solution $\bar v$ is the classical solution in $(0,\infty)$ such that $\lim\limits_{r\to 0} \bar v(r)=\infty$, (see Serrin and Zou \cite{sz}), it is
$$\bar v(r)=C(N,p) r{^\frac{-2}{p-1}},\quad \mbox{where} \quad  C(N, p)= \Bigl(\frac{2}{p-1}\Bigl(N-2-\frac{2}{p-1}\Bigl)\Bigl)^{1/(p-1)}.$$

If we consider $J_vr^2_v(\alpha, 1)$ for $\alpha>1$, then  $J_vr^2_v(\alpha,1)\to J_{\bar v}r^2_{\bar v}(1)= \frac{2}{p-1} $, therefore it is bounded and there is a $K_1=\sup_{\alpha\in(1,\infty)}J_vr^2_v(\alpha, 1)$. Note also that $v(\alpha, r) = \alpha v(1,\alpha^{\frac{p-1}{2}}r)$ thus $r_v(\alpha, s)= \alpha^{-\frac{p-1}{2}} r_v(1,\frac{s}{\alpha}) $ and $J_vr^2_v(\alpha,s)=\alpha J_vr^2_v(1,\frac{s}{\alpha})$. Therefore $K_s=\sup_{\alpha\in(s,\infty)}J_vr^2_v(\alpha, s)={K_1}{s}$.

 Let $d=\alpha_*^{k+1}+2\epsilon- \frac{\tilde\alpha^k+\alpha_*^k}{2}$,  $\bar s= (N-2)d/K_1$ and set $a=\bar s-(\alpha_*^{k+1}+2\epsilon)$. Then a solution $u(\alpha, r)$ of  \eqref{ivp} with $f=(s+a)^p$ is of the form
$ u(\alpha, r) =v(\alpha+a, r)-a $ and $J_ur_u^2(\alpha, s)=J_vr_v^2(\alpha+a, s+a)$ and
 $J_ur_u^2(\alpha, \alpha_*^{k+1}+2\epsilon)=J_vr_v^2(\alpha+a, \bar s)$
thus
$$K=\sup_{\alpha\in(\alpha_*^{k+1}+2\epsilon,\infty)}J_ur^2_u(\alpha, \alpha_*^{k+1}+2\epsilon)=\sup_{\alpha\in(\bar s,\infty)}J_vr^2_v(\alpha, \bar s)= \bar s K_1= (N-2)d$$

We can now choose $\alpha_+$ and $\alpha_-$ with $\alpha_*^k<\alpha_-<\alpha_*^{k+1}+2\epsilon -d= \frac{\tilde\alpha^k+\alpha_*^k}{2}< \alpha_+<\min\{\tilde\alpha^k,\alpha_*^{k+1}+\epsilon, \alpha_*^{k+1}+\epsilon-\frac{K}{N-2}+ \frac{2N}{N-2}\frac{F}{f}(\beta)\}$.

Let $u$ be any solution of problem \eqref{ivp}, with $f$ as in \eqref{fa} and $\alpha>\alpha_*^{k+1}+2\epsilon $. Then by Lemma \ref{Lema paso e}, with $\zeta= J_ur_u^2 (\alpha_*^{k+1}+2\epsilon)\leq K$, we have  $J_\lambda  r_\lambda^2(\alpha_*^{k+1}+\epsilon)>\zeta-(N-2)\epsilon$ tends to $\zeta-(N-2)\epsilon$ when $\lambda\to \infty$.
Since $\alpha_-< \alpha_*^{k+1}+2\epsilon -d< \alpha_*^{k+1}+\epsilon- \frac{\zeta-(N-2)\epsilon}{N-2}$  we can use Proposition \ref{Prop J+-} to find $\bar \lambda(\alpha)$ and show that the solution $u_-$ with initial condition $\alpha_-$ have
$$J_-(s)< J_\lambda (s) \quad \mbox{ for }\ s<s_0,$$
for all $\lambda>\bar \lambda(\alpha)$, as long as their respective energies are positive. In particular,
$ 0<J_-(0)< J_\lambda (0) $ thus $u$ reaches $0$.

Since when $\alpha \to \infty$ the solutions $u(\alpha,r)$ converge to some $\bar u(r)$ on compact sets, there will be a $\bar \lambda_\infty$ such that we can choose $\bar \lambda (\alpha)<\bar \lambda_\infty$ for $\alpha>A$. Since for $\alpha = \alpha_*^{k+1}+\epsilon$ the solutions are independent of $\lambda$ and satisfy the hypothesis of Proposition \ref{Prop J comparan}, we can choose $\lambda_*$ such that we can choose $\bar \lambda (\alpha)<\bar \lambda_*$ for $\alpha<B$.
From Lemma \ref{Lema continudad} the choice of $\bar\lambda(\alpha)$ can be done uniformly for $\alpha\in[B,A]$. Therefore choosing $\bar \lambda$ as de largest of these three we get that for $\lambda>\bar \lambda$ al solutions with initial condition $\alpha>\alpha_*^{k+1}+\epsilon$ cut $0$ at least once.

On the other hand, since $K$ is a supremum, there is an $\alpha_\Box$ such that  $J_ur^2_u(\alpha_\Box, \alpha_*^{k+1}+2\epsilon)> (N-2)( \alpha_*^{k+1}+2\epsilon- \tilde\alpha^1)$.
 Then by Lemma \ref{Lema paso e}, and Proposition \ref{Prop J+-} we can find
$\bar \lambda(\alpha_\Box)$ and show that the solution $u_+$ with initial condition $\alpha_+$ have
$$J_+(s)> J_\lambda (s) \quad \mbox{ for }\ s<s_0,$$
for all $\lambda>\lambda_0$, as long as their respective energies are positive. In particular,
$ J_\lambda (\tilde s)< J_+(\tilde s)<0 $ thus $u$ can not reach $0$ a second time.
By the argument at the beginning of this subsection, there must be a $k^{th}$-bound state solution with initial condition in $(\alpha_*^k+2\epsilon,\alpha _\Box)$.

\end{proof}

\newpage

\end{document}